\documentclass[12pt]{amsart}
\usepackage{amssymb}\usepackage{color}

\usepackage{marginnote}

\usepackage{url,titletoc,enumerate}

\usepackage[pagebackref,colorlinks,linkcolor=red,citecolor=blue,urlcolor=blue,hypertexnames=true]{hyperref}
  \def\ol{\overline}

    \usepackage{graphicx,tikz}
    \usetikzlibrary{shapes,patterns,calc}

  \usepackage{mathrsfs,accents,setspace}
  
  \hfuzz=5.002pt 
  \vfuzz=8.002pt

\linespread{1.105}
\textwidth = 6.5 in 
\textheight = 8.5 in 
\oddsidemargin = 0.0 in 
\evensidemargin = 0.0 in
\topmargin = 0.0 in
\headheight = 0.0 in
\headsep = 0.3 in
\parskip = 0.05 in
\parindent = 0.3 in

\numberwithin{equation}{section}

\newtheorem{thm}{Theorem}[section] 
\newtheorem{theorem}[thm]{Theorem}

\newtheorem{cor}[thm]{Corollary}
\newtheorem{lemma}[thm]{Lemma}
\newtheorem{lem}[thm]{Lemma}
\newtheorem{prop}[thm]{Proposition}

\theoremstyle{definition}
\newtheorem{definition}[thm]{Definition}
\newtheorem{defn}[thm]{Definition}
\newtheorem{conj}         [thm]{Conjecture}

\newtheorem{remark}[thm]{Remark}
\newtheorem{rem}[thm]{Remark}
\newtheorem{example}[thm]{Example}


  \newcommand{\cD}{\mathcal D}
   \newcommand{\Abullet}{\hspace{10pt} \raisebox{2pt}{\scriptsize{\textbullet}}}

\def\tX{\tilde X}

\def\mcop{\textrm{\rm co$^{\rm oper}\cD_p$}}
\def\mcoop{\textrm{\rm co$\,\cD^\infty_p$}}
\def\mco{\textrm{\rm co$^{\rm mat}$}}
\def\cmco{\overline{\textrm{\rm co}}^{\rm mat}}

\def\bTV{bent TV screen}

\def\tvs{\textvisiblespace}

\def\ben{\begin{enumerate}}
\def\een{\end{enumerate}}

\newcommand{\sac}[1]{ \mbS_{#1} }
\newcommand{\sar}[1]{ \mbS_{#1} }

\def\bem{\begin{pmatrix}}
\def\eem{\end{pmatrix}}

\def\beq{\begin{equation}}
\def\eeq{\end{equation}}

\newcommand{\hs}{H^{\Uparrow}}

  \newcommand{\inter}[1]{\accentset{\smash{\raisebox{-0.12ex}{$\normalsize\circ$}}}{#1}\rule{0pt}{2.3ex}}

\renewcommand{\subset}{\subseteq}
\renewcommand{\supset}{\supseteq}
\renewcommand{\emptyset}{\varnothing}

\newcommand{\ax}{\langle x\rangle}

\DeclareMathOperator{\conv}{co}
\DeclareMathOperator{\proj}{proj}
\DeclareMathOperator{\rank}{rank}

 \def\tX{\tilde X}

\newcommand{\df}[1]{{\bf{#1}}{\index{#1}}}

\def\cC{\mathcal C}

\def\cK{\mathscr K}

\def\ccD{\inter{\cD}}

\def\cP{\mathscr P}


\def\RR{\mathbb R}
\def\al{\alpha}
\def\be{\beta}
\def\ga{\gamma}
\def\de{\delta}
\def\epsilon{\varepsilon}
\def\eps{\epsilon}

\def\bec{\begin{conj}}
\def\eec{\end{conj}}
\def\bex{\begin{example}}
\def\eex{\end{example}}
\def\de{\delta}

\def\Lift{\mathcal L}

\def\Liftfin{\mathcal L^{\rm fin}}

\newcommand{\hLift}{\hat{\Lift}}
\def\hLiftfin{\hat{\mathcal L}^{\rm fin}}

\def\mbS{{\mathbb S}}
\def\mS{{\mathbb S}}
\def\smat{\mS}
\def\smatmg{\mathbb S_m^g}

\def\smatg{\mathbb S^g}
\def\smatn{\mathbb S_n}
\def\SRnn{\mathbb S_n}

\def\smatng{\mathbb S_n^g}
\def\smatnh{\mathbb S_n^h}

\def\smatg{\mathbb S^g}
\def\smath{\mathbb S^h}

\def\hY{\hat{Y}}
\def\cS{\mathcal S}
\def\cH{\mathscr H}
\def\cE{\mathcal E}
\def\cN{\mathcal N}
\def\cO{\cD^\infty}
\def\smatog{\mathbb S_{\rm oper}^g}
\def\smatkg{\mathbb S_K^g}

\def\LK{{\rm Lat}(\cK)}

\def\RR{ {\mathbb{R}} }
\def\R{ {\mathbb{R}} }
\def\C{ {\mathbb{C}} }

\def\N{ {\mathbb{N}} }

\def\epsilon{\varepsilon}
\def\eps{\epsilon}

\def\La{\Lambda}

\def\ll{\ell}
\def\cL{\mathcal L}
\def\dpovertwo{\left\lceil \frac12\deg(p)\right\rceil}

\def\FF{\mathbb C}

\def\FFnn{\FF^{n\times n}}

\def\mt{\nu}
\def\x{x}
\def\gP{\mathfrak P}

\begin{document}

\title[Matrix Convex Hulls of Free Semialgebraic Sets]{Matrix Convex Hulls of \\[.1cm] Free Semialgebraic Sets}

\author[J.W. Helton]{J. William Helton${}^1$}
\address{J. William Helton, Department of Mathematics\\
  University of California \\
  San Diego}
\email{helton@math.ucsd.edu}
\thanks{${}^1$Research supported by the National Science Foundation (NSF) grant
DMS 1201498, and the Ford Motor Co.}

\author[I. Klep]{Igor Klep${}^{2}$}
\address{Igor Klep, Department of Mathematics, 
The University of Auckland, New Zealand}
\email{igor.klep@auckland.ac.nz}
\thanks{${}^2$Supported by the Faculty Research Development Fund (FRDF) of the
University of Auckland (project no. 3701119). Partially supported by the Slovenian Research Agency grant P1-0222.}

\author[S. McCullough]{Scott McCullough${}^3$}
\address{Scott McCullough, Department of Mathematics\\
  University of Florida\\ Gainesville 
   }
   \email{sam@math.ufl.edu}
\thanks{${}^3$Research supported by the NSF grant DMS 1101137}

\subjclass[2010]{Primary 46L07, 14P10, 90C22; Secondary 13J30, 46L89}
\date{\today}
\keywords{convex hull, linear matrix inequality (LMI), LMI domain, spectrahedron,
spectrahedrop, semialgebraic set, free real algebraic geometry, noncommutative polynomial}

\setcounter{tocdepth}{3}
\contentsmargin{2.55em} 
\dottedcontents{section}[3.8em]{}{2.3em}{.4pc} 
\dottedcontents{subsection}[6.1em]{}{3.2em}{.4pc}
\dottedcontents{subsubsection}[8.4em]{}{4.1em}{.4pc}

\makeatletter
\newcommand{\mycontentsbox}{%
{\centerline{NOT FOR PUBLICATION}
\addtolength{\parskip}{-2.3pt}
\small\tableofcontents}}
\def\enddoc@text{\ifx\@empty\@translators \else\@settranslators\fi
\ifx\@empty\addresses \else\@setaddresses\fi
\newpage\mycontentsbox\newpage\printindex}
\makeatother


\begin{abstract}\scriptsize
This article resides in the realm of the noncommutative (free) analog of
real algebraic geometry -- the study  
  of polynomial inequalities and equations over the real numbers -- 
  with a focus on matrix convex sets $\cC$ and their projections $\hat \cC$.   
  A free  semialgebraic set which is convex as well as  
  bounded and open can be represented as the solution set
  of a Linear Matrix Inequality (LMI),  a result which suggests that 
  convex free  semialgebraic sets are rare. Further, Tarski's transfer principle
  fails in the free setting:   The projection of a free convex semialgebraic set need not be free semialgebraic.
  Both of these results, and the importance of convex approximations in the optimization 
 community,  provide impetus and motivation  for 
  the study of the matrix convex hull of free  semialgebraic sets.

This article presents the construction of a sequence ${\cC}^{(d)}$ of LMI domains
in increasingly many variables  whose projections $\hat{\cC}^{(d)}$ are successively finer 
outer approximations of the
matrix convex hull of a free semialgebraic set $\cD_p=\{X: p(X)\succeq0\}$.
It is based on free analogs of moments and Hankel
matrices. Such an approximation scheme is possibly the best that can be done
in general. 
Indeed, natural noncommutative transcriptions  
 of formulas for certain well-known classical (commutative) convex hulls 
 do not produce the convex hulls in the free case. 
 This failure is illustrated here on one of the simplest free nonconvex $\cD_p$.

A basic question is which free sets $\hat \cS$ are the projection of a
free semialgebraic set $\cS$?  Techniques and results of this paper
bear upon this  question which is open even for convex sets.
\end{abstract}

\maketitle

\newpage

\section{Introduction}
 This  article resides  within the realm  of the recently emerging area of
noncommutative (free) real algebraic geometry. As such
 it concerns free noncommutative polynomials $p(x)=p(x_1,\ldots,x_g)$, and 
their associated free semialgebraic sets $\cD_p$ (resp. $\mathfrak P_p$) consisting of those
  $g$-tuples of self-adjoint  matrices $X$ of the same size for which 
 $p(X)$ is positive semidefinite (resp. definite). 
 The case of (matrix) convex  $\cD_p$ 
 is important in  applications and also 
 serves as an entr\'ee to basic general aspects of free real algebraic
geometry. 

From the  main result of \cite{HM12}, 
a  bounded and open 
free   semialgebraic set that is  convex can be represented 
 as the set of solutions  to a Linear Matrix Inequality (LMI),
 called a free spectrahedron.
This result is decidedly negative from the viewpoint of systems engineering,
since it means that convex free semialgebraic sets are rare. 
It also motivates the theme of this article,  the challenging problem of understanding 
the  convex hull of a free   semialgebraic set $\cD_p.$\looseness=-1

While formal definitions occur later, we now give  the basic flavor of our results.
The main classical approach for producing
the convex hull of a basic semialgebraic set $\cD \subset \RR^g$ 
is to cleverly construct  a   spectrahedron $\cC$ in a bigger space
whose projection 
 onto $\RR^g$ is the convex hull of $\cD.$ In the literature the set 
$\cC$ goes by several names. Here we will refer to these as an \df{LMI lift} or \df{spectrahedral lift} of the convex
hull of $\cD$.  Developing the free analog of a theorem due to Lasserre 
for classical semialgebraic sets \cite{Las}, 
under modest hypotheses on $\cD_p,$
{\it we construct  
a sequence $\cC^{(d)}$ of free spectrahedra 
in larger and larger spaces whose projections close down on 
the free convex hull of $\cD_p$.} See Corollary \ref{cor:seqlasserre}.

 We remark that solutions sets of LMIs
 play a prominent role in the theory of completely positive maps 
 and operator systems \cite{Arv72,Pau02} as well as quantum information theory (see for instance \cite{JKPP11}).
  Moreover, their projections are related to recent advances in the theory of quotients of operator
 systems for which \cite{FP} is one of several recent references. 
 A natural approach to understanding convexity in the free setting  is through the study
 of free analogs of extreme points. One such is Arveson's \cite{Arv72} notion of a {\it boundary representation}
 as a noncommutative analog of a peak point for a uniform algebra. 
 As an emphatic culmination of a spate of recent activity,
 the article \cite{DK+} validates
 Arveson's vision that an  operator system has  sufficiently many boundary representations 
 to generate its $C^*$-envelope.  For matrix convex hulls of free semialgebraic sets 
 other notions of extreme points occur naturally (see for instance \cite{F04,WW99,Kls13}) and 
 are treated in the forthcoming article \cite{Arvboundary}.\looseness=-1

Beyond this point the news is bad.
An approximation scheme, like that found here,  is possibly the best that can be done in general.
As evidence, 
we study thoroughly a $\cD_p$ which has a strong  claim to the title of simplest  nonconvex free semialgebraic set.
The free analogs of 
two different classical spectrahedral  lifts  $\cC$ for $\cD_p$ each have the property that 
the projection $\hat\cC$ of $\cC$ is convex and 
contains $\cD_p$ and, at the scalar (commutative) level $\hat\cC(1)=\cD_p(1).$ However, in both cases, 
$\hat\cC$ is not the free
convex hull of $\cD_p$; that is, $\mco\cD_p\subsetneq \hat\cC$.
 See Example \ref{ex:btvlift},
 Subsection \ref{subsec:hankelFun} and Section \ref{sec:ex}.

A cornerstone of classical real algebraic geometry (RAG) is 
Tarski's transfer principle: 
the projection of a 
semialgebraic set
is again semialgebraic. 
In free RAG the corresponding assertion is false even for convex sets, see \cite{HM12}.
 Thus 
a basic question, on which this article bears and which is perhaps the most accessible path to 
understanding the class of sets closed with respect to projections, 
and containing the free semialgebraic sets,
is which free  sets are the projection of a free spectrahedron.

\subsection{Context and Perspective}
The standard reference on classical RAG is \cite{BCR98}. Two 
more tailored to our purposes are \cite{Las09b} and  \cite{Lau09}.

The construction  of lifts used here is analogous to
one introduced by Lasserre \cite{Las} and Parrilo \cite{Par06} independently.
It involves positivity for  multivariable moment matrices, studied systematically
 by Curto and Fialkow in a series of articles (see for example \cite{CF}),  as well as their duals which
are algebraic certificates of  positivity for  polynomials,
called Positivstellens\"atze.
Lasserre's key idea was to use a Positivstellensatz representation of linear functionals
$\ell$
delineating the convex hull of the set $\cD$ under study.
When a nice Positivstellensatz exists for \emph{all} such $\ell$, one gets that a 
suitable spectrahedron $\cC$, whose projection equals $\cD$, exists.
In fact, a related idea is that of the theta body introduced earlier to combinatorial
optimization by Lov\'asz in \cite{Lov79}; see also \cite{GLS93}.
The recent survey \cite{GT12} of  Gouveia and Thomas ties these subjects together. See also 
their papers with Laurent and Parrilo \cite{GLPT12,GPT10,GPT12}.
LMI lifts of convex sets appeared in the book of 
Nesterov and Nemirovskii 
 \cite{NN94} at the outset of SDP. In their examples of sets with LMI representations -- 
see Chapter 6 -- rather than representing the sets, they gave representations for the lifts.

Returning to free lifts we mention that they are used in linear systems 
engineering to obtain free convex envelopes of sets.
In the absence of any systematic theory, the literature  consists of 
 clever constructions (cf. \cite{OGB02,GO10}).
Moment matrix positivity in a free noncommutative context was 
studied in \cite{PNA10},
in connection with noncommutative sums of squares,  following \cite{HM04a}
and focusing on  computational aspects; see also \cite{HKM12}.

 While the setup of this paper is complex, that is, we work with self-adjoint
 complex matrices, the results carry over with little change to a combination of real symmetric and skew-symmetric 
  matrices, cf.~Remark \ref{rem:CvsR}.

 We thank Cory Stone and \v Spela \v Spenko for many helpful comments on early versions of this manuscript.

\subsection{Guide to the Paper}
\begin{enumerate}[$\Abullet$]
 \item
Section \ref{sec:2} contains basic definitions, including that of 
free   polynomials, free semialgebraic sets, free convexity, and 
the matrix and operator convex hull of a free semialgebraic set. 
 \item
Section \ref{sec:cp} concerns  
 linear pencils and their relation to matrix convex hulls.
  \item
 Basic properties of  {projections of free spectrahedra}
   are presented in Section \ref{sec:4}.
  \item
   For a given free semialgebraic set $\cD_p$,  the construction of Section  \ref{sec:freelassy}, based upon 
  free analogs of moment sequences and Hankel matrices,   produces an 
 infinite free spectrahedron $\cC$ together with a projection from $\cC$ onto 
the operator convex hull $\mcop$ of
 $\cD_p.$\looseness=-1
  \item
  In Section \ref{sec:preEx},  truncations of the free Hankel matrices from Section \ref{sec:freelassy} 
  which in turn produce a sequence $(\cC^{(d)})_d$ of (finite) free spectrahedra together with projections
 $\pi_d$ are introduced. It is shown that  $\pi_d(\cC^{(d)})$ produces successively better outer approximations 
  to $\mcop$ and, in the limit, converges to $\mcop$. 
 \item
  Examples appear in Section \ref{sec:ex}. 
 \end{enumerate}
 
%
%

\section{Free Sets and Free Polynomials} \label{sec:2}
 Fix a positive integer $g$. 
 For a positive integer $n$, let $\smatng$ denote the set of $g$-tuples of
 complex self-adjoint
 $n\times n$ matrices and let $\smatg$ denote
 the  sequence $(\smatng)_n$. 
 A  \df{subset} $\Gamma$ of $\smatg$ is a sequence
 $\Gamma=(\Gamma(n))_n$ where
  $\Gamma(n) \subset \smatng$ for each $n$.  The subset $\Gamma$
  is \df{closed with respect to direct sums}  if 
 $A=(A_1,\dots,A_g)\in\Gamma(n)$ and
  $B=(B_1,\dots,B_g)\in\Gamma(m)$ implies
  \begin{equation}
  \label{eq:dsumsed}
  A\oplus B := \left ( \begin{pmatrix} A_1 & 0 \\ 0 & B_1\end{pmatrix}, \dots, \begin{pmatrix} A_g & 0\\ 0 & B_g\end{pmatrix} \right )\in\Gamma(n+m).
\end{equation}
  It is closed with respect to 
  \df{(simultaneous) unitary  conjugation} if for each $n,$ each $A\in \Gamma(n)$
 and each $n\times n$ unitary matrix $U$,
\[
 U^* A U = (U^* A_1U,\dots, U^* A_gU)\in \Gamma(n). 
\]
 The set $\Gamma$ is a  \df{free set} if it is closed with respect to direct sums and 
simultaneous  unitary conjugation.
We refer the reader to 
 \cite{Voi04,Voi10,KVV+,MS11,Pope10,AM+,BB07}
for a systematic study of free sets and free function theory.

We call a  free  set $\Gamma$
 \df{(uniformly) bounded} if there is a $C\in\R_{>0}$ such that $C-\sum X_j^2 \succeq 0$
  for all $X\in\Gamma$.  

\subsection{Free Polynomials}

\subsubsection{Words and free polynomials}
  We write $\ax$ for the monoid freely
generated by $x=(x_1,\ldots, x_g)$, i.e., $\ax$ consists of \df{words} in the $g$
noncommuting
letters $x_{1},\ldots,x_{g}$
(including the {\bf empty word $\emptyset$} which plays the role of the identity).
Let $\FF\ax$ denote the associative
$\FF$-algebra freely generated by $x$, i.e., the elements of $\FF\ax$
are polynomials in the freely noncommuting variables $x$ with coefficients
in $\FF$. Its elements are called \df{free polynomials}.
Endow $\FF\ax$ with the natural \df{involution} ${}^*$
which extends the complex conjugation on $\C$, fixes $x$, reverses the
 order of words, and acts $\R$-linearly on polynomials. 
Polynomials fixed under this involution are \df{symmetric}.
The
length of the longest word in a free polynomial $f\in\FF\ax$ is the
\df{degree} of $f$ and is denoted by $\deg( f)$ or $|f|$ if $f\in\ax$. The set
of all words of degree at most $k$ is $\ax_k$, and $\FF\ax_k$ is the vector
space of all free polynomials of degree at most $k$.

 Fix positive integers
 $\mt$ and $\ell$.  \df{Free matrix polynomials} -- 
  elements of 
$\FF^{\ell\times\mt}\ax=\FF^{\ell\times\mt}\otimes \FF\ax;$ 
 i.e.,
 $\ell\times\mt$ matrices with entries from $\FF\ax$ -- will play a role
 in what follows. Elements of $\FF^{\ell\times\mt}\ax$ are 
 represented as
\beq\label{eq:preeq0}
  P=\sum_{w\in\ax} B_w w\in\FF^{\ell\times\mt}\ax,
\eeq
  where $B_w\in \FF^{\ell\times\mt}$, and the
 sum is finite. 
The involution ${}^*$ extends to matrix polynomials by
\[
  P^* =\sum_w B_w^*  w^*\in\FF^{\mt\times\ell}\ax. 
\]
  If $\mt=\ell$ and $P^*=P$, we say $P$ is \df{symmetric}.

\subsubsection{Polynomial evaluations}
 If $p\in\FF\ax$ is a free polynomial and  $X\in\SRnn^g$, then 
 the evaluation $p(X)\in\FFnn$ is defined in the natural way by 
 replacing $x_{i}$ by $X_{i}$ and sending the empty word to
  the appropriately sized identity matrix.
Such evaluations produce finite dimensional $*$-representations of
the algebra of free polynomials and vice versa.

Polynomial evaluations extend to matrix polynomials by evaluating 
entrywise. That is, if $P$ is as in \eqref{eq:preeq0}, then
\[
P(X) = \sum_{w\in\ax} B_w \otimes w(X) \in \FF^{\ell n\times \mt n},
\]
where $\otimes$ denotes the (Kronecker) tensor product.
 Note that if $P\in\FF^{\ell\times \ell}\ax$ is symmetric, and $X\in      \SRnn^{g}$, then
 $P(X)\in\FF^{\ell n\times \ell n}$ is a self-adjoint matrix.

\subsection{Free Semialgebraic Sets}
A symmetric free polynomial and even a symmetric matrix  polynomial $p$ 
 in free  variables naturally determine free sets  \cite{dOHMP09}
  via
 \[
 \begin{split}
    \cD_p(n):= & \{ X \in \smatng : p(X) \succeq 0\}, \qquad \quad \cD_p: = (\cD_p(n))_n .
 \end{split}
\] 
  By analogy with real algebraic geometry \cite{BCR98}, 
  we will refer to these as \df{free (basic closed) semialgebraic sets}.

\begin{example}\rm
\label{ex:btv}
Consider 
\begin{equation}\label{eq:holyTV}
p=1-x_1^2-x_2^4.
\end{equation}
In this case $p$ is 
 symmetric with $p(0)=1>0$.  
The free semialgebraic set $\cD_p$ is called the
 \df{bent free TV screen}, or (bent) TV screen for short.
We shall use this example at several places to illustrate the developments
in this paper.

\def\nos{100}
\begin{center}
\begin{tikzpicture}[domain=-2:2,scale=2] 
\draw[very thin,color=gray] (-1.6,-1.6) grid (1.6,1.6);
\draw[color=black, domain=1:-1, very thick,  samples=\nos] plot (\x,sqrt{(sqrt{(1-(\x)^2)})});
\draw[color=black, domain=1:-1, very thick,  samples=\nos] plot (\x,-sqrt{(sqrt{(1-(\x)^2)})});
\draw[color=black, domain=-1:1, very thick,  samples=\nos] plot (\x,sqrt{(sqrt{(1-(\x)^2)})});
\draw[color=black, domain=-1:1, very thick,  samples=\nos] plot (\x,-sqrt{(sqrt{(1-(\x)^2)})});
 \fill[color=blue!20, domain=1:-1, samples=\nos] plot (\x,sqrt{(sqrt{(1-(\x)^2)})}); 
 \fill[color=blue!20, domain=1:-1, samples=\nos] plot (\x,-sqrt{(sqrt{(1-(\x)^2)})});
 \fill[color=blue!20, domain=-1:1, samples=\nos] plot (\x,sqrt{(sqrt{(1-(\x)^2)})}); 
 \fill[color=blue!20, domain=-1:1, samples=\nos] plot (\x,-sqrt{(sqrt{(1-(\x)^2)})});
\fill[blue!20] (0,0) circle (1);
\draw[->] (-1.7,0) -- (1.7,0) node[right] {$x_1$}; \draw[->] (0,-1.7) -- (0,1.7) node[above] {$x_2$};
\filldraw[color=black] (1,0) circle (0.02) node[below right] {$1$};
\filldraw[color=black] (0,1) circle (0.02) node[above left] {$1$};
\end{tikzpicture} 
\end{center}~\centerline{Bent TV screen $\cD_p(1)=\{ (x_1,x_2)\in\R^2 : 1-x_1^2-x_2^4\geq0\}$.}
\end{example}

  A subset $\Gamma$ of $\smatg$ is \df{closed with respect to restriction to reducing subspaces}
  if $A\in\smatng$ and $H\subset \FF^n$ is an invariant (reducing) subspace for $A$ implies 
  that $A$ restricted to $H$ is in $\Gamma.$

\begin{lemma}
 \label{lem:semialgsarefree}\rm
 \mbox{}
\par
\ben[\rm(1)]
\item
  For each $n,$ the 
  set $\cD_p(n)$ is a semialgebraic subset of $\smatng$.
\item
  The free semialgebraic set $\cD_p$ is a free set. Moreover, it
  is   closed with respect to restriction to reducing subspaces.
   \een
\end{lemma}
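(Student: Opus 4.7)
The plan is to verify each assertion by unwinding the definition of polynomial evaluation, treating the scalar and matrix-polynomial cases uniformly via the tensor product formula $P(X)=\sum_w B_w\otimes w(X)$.

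\textbf{Part (1).} Fix $n$ and view $\smatng$ as a real vector space of dimension $gn^2$ (the self-adjointness constraint cuts the real dimension of each $n\times n$ complex matrix to $n^2$). For each word $w\in\ax$, the entries of $w(X)$ are real-polynomial functions of the real and imaginary parts of the entries of $X_1,\dots,X_g$, so the same is true of the entries of $p(X)=\sum_w B_w\otimes w(X)$. Positive semidefiniteness of a self-adjoint matrix is a semialgebraic condition, for example via nonnegativity of all principal minors (or equivalently nonnegativity of the coefficients of the characteristic polynomial up to sign). Hence $\cD_p(n)=\{X:p(X)\succeq 0\}$ is cut out by finitely many real polynomial inequalities in the coordinates of $\smatng$ and is therefore semialgebraic.

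\textbf{Part (2), free set.} The two closure properties reduce to the naturality of polynomial evaluation. For direct sums, since $w(X\oplus Y)=w(X)\oplus w(Y)$ for every word $w$, one gets
\[
p(X\oplus Y)=\sum_w B_w\otimes\bigl(w(X)\oplus w(Y)\bigr),
\]
which, after the canonical reshuffle, equals $p(X)\oplus p(Y)$; in particular $p(X),p(Y)\succeq 0$ implies $p(X\oplus Y)\succeq 0$. For simultaneous unitary conjugation, $w(U^*XU)=U^*w(X)U$, so
\[
p(U^*XU)=\sum_w B_w\otimes(U^*w(X)U)=(I_\ell\otimes U^*)\,p(X)\,(I_\ell\otimes U),
\]
which preserves positive semidefiniteness. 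Thus $\cD_p$ is closed under direct sums and simultaneous unitary conjugation, i.e., $\cD_p$ is a free set.

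\textbf{Part (2), reducing subspaces.} Suppose $X\in\cD_p(n)$ and $H\subset\FF^n$ is reducing for each $X_j$; then in the orthogonal decomposition $\FF^n=H\oplus H^\perp$ we may write $X_j=X_j|_H\oplus X_j|_{H^\perp}$, so $X$ is unitarily equivalent to $X|_H\oplus X|_{H^\perp}$. Combining the two closure properties already established, $p(X)\succeq 0$ forces $p(X|_H\oplus X|_{H^\perp})=p(X|_H)\oplus p(X|_{H^\perp})\succeq 0$, and restricting to the summand corresponding to $H$ yields $p(X|_H)\succeq 0$. Hence $X|_H\in\cD_p$.

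The verification is essentially bookkeeping; the only mild subtlety is tracking the tensor factors when $p$ is a genuine matrix polynomial, which is why I record the formulas $p(X\oplus Y)=p(X)\oplus p(Y)$ and $p(U^*XU)=(I_\ell\otimes U^*)p(X)(I_\ell\otimes U)$ explicitly.
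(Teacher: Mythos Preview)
Your proof is correct and follows essentially the same approach as the paper: Sylvester's criterion for part (1), and the identities $p(X\oplus Y)=p(X)\oplus p(Y)$ and $p(U^*XU)=(I_\ell\otimes U^*)p(X)(I_\ell\otimes U)$ for part (2). The paper simply declares the free-set property ``evident'' and moves directly to the reducing-subspace argument, whereas you spell out both closure properties explicitly; otherwise the arguments coincide.
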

  
\begin{proof}
   Fix $n$.  There are scalar commutative polynomials $p_{i,j}$ 
  in $gn^2$ variables such that  $p(X) = (p_{i,j}(X))$ for $X\in\smatng$.
    By Sylvester's criterion, $p(X)\succeq 0$ if and only if
   all the principal minors of $p(X)$ are nonnegative.
   Since these minors are all polynomials,
 it follows that $\cD_p(n)$ is a semialgebraic set.

   It is evident that  $\cD_p$ is  a free set.
   Suppose $H$ reduces $A\in \cD_p(n)$. In this case,
   $A=A^1\oplus A^2$ for $A^j\in \mathbb S_{n_j}^g$
  with $n_1+n_2=n$.  Since $0\preceq p(A)=p(A^1)\oplus p(A^2),$ it follows that 
  $p(A^j)\succeq 0$ for each $j$.  Hence $A\in\cD_p(n_1)$ and 
 $\cD_p$ is closed with respect to restrictions to reducing subspaces.
\end{proof}

 \subsection{Free Convexity}
 
 A set $\Gamma = (\Gamma(n))_n \subset \smatg$ 
is \df{matrix convex} or \df{freely convex} if it is closed
under direct sums and \df{(simultaneous) isometric conjugation}; i.e.,
 if for each $m\le n$, each   $A=(A_1,\dots,A_g)\in\Gamma(n)$, and
 each isometry $V:\FF^m\to\FF^n$,
\[
 V^* A V := \left ( V^*A_1V, \dots, V^* A_g V\right )\in\Gamma(m)
\]
 In particular, a matrix convex set is  a free set. 

In the case that $\Gamma$ is matrix convex, 
it is easy to show that each $\Gamma(n)$ is itself convex. 
Indeed, given real numbers $s,t$ with $s^2+t^2=1$  and $ X, Y \in \Gamma(n)$, let
\[
  V=\begin{pmatrix} 
 s I_n \\ t I_n
 \end{pmatrix}
\]
 and observe that
\begin{equation}\label{eq:mconv}
  V^* \begin{pmatrix} X & 0 \\ 0 & Y \end{pmatrix} V = s^2 X + t^2 Y \in \Gamma(n).
\end{equation}
 More generally, if $A^\ell=(A^\ell_1,\dots,A^\ell_g)$ are in $\Gamma(n_\ell)$, then 
  $A=\bigoplus_\ell A^\ell\in \Gamma(n),$
  where $n=\sum n_\ell$.
 Hence, if 
\[
   V= \begin{pmatrix} V_1 \\ V_2\\ \vdots \\ V_k\end{pmatrix}
\]
 is an isometry and $V_\ell$ are $n_\ell \times m$ matrices (for some $m$), then
\begin{equation}
\label{eq:ccomb}
 V^* A V = \sum^k_{\ell=1} V_\ell^* A^\ell V_\ell \in \Gamma (m) \qquad \text{where}
 \quad  \sum^k_{\ell=1} V_\ell^* V_\ell = I  
\end{equation}
 A sum as in 
  \eqref{eq:ccomb} is a 
 \df{matrix convex combination} of the 
 $g$-tuples $\{A^\ell: \ \ell =1, \dots, k \}$.
    
\begin{lemma}
 \label{lem:capped}\rm
  Suppose  $\Gamma$ is a free subset of $\smatg$. 
\ben[\rm(1)]
 \item
  If  $\Gamma$ is closed with respect to  restriction to reducing subspaces,
  then the following are equivalent: 
 \ben[\rm (i)]
  \item  $\Gamma$ is matrix convex; 
  \item  each  $\Gamma(n)$ is convex in the  classical  
  sense of taking scalar convex combinations.

 \een
\item
  If $\Gamma$ is $($nonempty and$)$ matrix convex, then $0\in \Gamma(1)$
  if and only if  $\Gamma$ is closed with respect to $($simultaneous$)$ conjugation by
  contractions.
\een
\end{lemma}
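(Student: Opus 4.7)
The plan is to address the two parts separately. For Part~(1), the implication (i)$\Rightarrow$(ii) is almost already on the page: the scalar convex combination $\lambda X+(1-\lambda)Y$ of $X,Y\in\Gamma(n)$ is realized as the isometric conjugation displayed in \eqref{eq:mconv} with $s^2=\lambda$, $t^2=1-\lambda$, so each slice $\Gamma(n)$ is classically convex.

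For the converse (ii)$\Rightarrow$(i) -- the main content -- I note that $\Gamma$ is already closed under direct sums (being a free set), so only closure under isometric conjugation needs to be established. Given $A\in\Gamma(n)$ and an isometry $V\colon\FF^m\to\FF^n$, I would set $P=VV^*$ (the orthogonal projection onto $\ran V$) and introduce the self-adjoint unitary $W=2P-I\in\FF^{n\times n}$. Unitary invariance of $\Gamma$ gives $W^*AW\in\Gamma(n)$, and a one-line expansion yields
\[
  \tfrac{1}{2}\bigl(A+W^*AW\bigr)\;=\;PAP+(I-P)A(I-P).
\]
The left-hand side lies in $\Gamma(n)$ by scalar convexity. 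The right-hand side is block-diagonal with respect to $\FF^n=\ran P\oplus\ran P^\perp$, so $\ran P$ is a reducing subspace; by hypothesis, the restriction belongs to $\Gamma(m)$. Since $PV=V$ and $V^*P=V^*$, that restriction is precisely $V^*(PAP)V=V^*AV$.

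For Part~(2), one direction is immediate: if $\Gamma$ is closed under contractive conjugation, then for any $A\in\Gamma(n)$ (which exists by nonemptyness) the zero contraction $0\colon\FF\to\FF^n$ yields $0\in\Gamma(1)$. For the converse, assume $0\in\Gamma(1)$; iterated direct sums then give $0_m\in\Gamma(m)$ for all $m$. Given a contraction $C\colon\FF^m\to\FF^n$, I would use the standard Halmos-type dilation
\[
  V=\begin{pmatrix} C \\ \sqrt{I_m-C^*C}\end{pmatrix}\colon\FF^m\longrightarrow\FF^n\oplus\FF^m,
\]
which is an isometry. Setting $\tilde A:=A\oplus 0_m\in\Gamma(n+m)$ (via direct sums), the block computation $V^*\tilde AV=C^*AC$ together with the isometric invariance established in Part~(1) places $C^*AC$ in $\Gamma(m)$.

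The main obstacle is the (ii)$\Rightarrow$(i) direction of Part~(1): the natural temptation is to try to extend $V$ to a unitary and then compress, but a compression is not a reducing-subspace restriction. The symmetrization $W=2P-I$ is the one move that converts the compression $V^*AV$ into a restriction of a block-diagonal matrix, thereby activating the reducing-subspace hypothesis. Once this is in hand, Part~(2) reduces to a standard dilation argument.
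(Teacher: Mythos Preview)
Your proof is correct. For Part~(2) your Halmos dilation is essentially identical to the paper's argument; the only cosmetic slip is the phrase ``isometric invariance established in Part~(1)'' --- in Part~(2) isometric invariance is part of the \emph{hypothesis} of matrix convexity, not something derived.

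For Part~(1)(ii)$\Rightarrow$(i) you actually do more than the paper, which simply cites \cite[\S2]{HM04a} for this implication. Your argument via the reflection $W=2P-I$ is a clean self-contained route: averaging $A$ with $W^*AW$ produces the block-diagonal tuple $PAP+(I-P)A(I-P)\in\Gamma(n)$ using only unitary invariance and scalar convexity, and then the reducing-subspace hypothesis delivers $V^*AV\in\Gamma(m)$. This is precisely the mechanism that turns a compression into a restriction, and it makes transparent why the reducing-subspace assumption is the natural hypothesis here.
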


\begin{proof}{}
  Evidently (i) implies (ii).  The implication (ii) implies (i) is proved 
  in \cite[\S2]{HM04a}.
 For item (2), if $\Gamma$ is closed with respect to conjugation by a contraction,
  then given an $A\in\Gamma(n)$, letting $z:\FF\to\FF^n$ 
be the zero mapping,
  gives
  $z^* A z = 0\in \FF^g$. Hence, $0\in\Gamma(1)$.   Conversely,
  suppose $0\in\Gamma(1)$. In this case for each $n$ the zero tuple $0_n$ is in
  $\Gamma(n)$ as $\Gamma$ is closed with respect to direct sums. 
 Given 
 an $n\times n$ contraction $F$, and
 $X\in\Gamma(n)$ observe that $X\oplus 0_n\in\Gamma(2n),$  form the isometry 
\[
  V^*=\begin{pmatrix} F^* & (I-F^*F)^{\frac12} \end{pmatrix}
\]
  and compute 
\[
   F^* X F = V^* \begin{pmatrix} X & 0 \\ 0 & 0 \end{pmatrix} V \in\Gamma. \qedhere
\]
\end{proof}

\begin{remark}\rm
 Combining the second items of Lemmas \ref{lem:semialgsarefree} and
  \ref{lem:capped} it follows that the free semialgebraic set
  $\cD_p$ is matrix convex if and only if
  each $\cD_p(n)$ is convex. 
\end{remark}

\begin{example}
\label{ex:btvv}
Consider the TV screen given by $p=1-x_1^2-x_2^4$ introduced in Example \ref{ex:btv}.
While $\cD_p(1)$ is convex (see Example \ref{ex:btv} for a picture), 
it is known that $\cD_p$ is not 
matrix convex, see 
\cite{DHM07}
or \cite[Chapter 8]{BPR13}. Indeed, already $\cD_p(2)$ 
is not a convex set.
\end{example}

\smallskip

\subsection{The Matrix Convex Hull}
The \df{matrix convex hull} of a subset $\Gamma = (\Gamma(n))_n$ of $\smatg,$ 
 denoted $\mco \Gamma$, 
is the smallest matrix convex set containing $\Gamma$. 
As usual, the intersection of matrix convex sets is matrix convex, 
so the notion of a hull is well defined. 
 Further, 
there is a simple description of the matrix convex hull of a free  set.

 For positive integers $n$ let 
\begin{equation}\label{eq:Cp}
 \cC(n) :=\bigcup_{m\in\N}\{X\in\smatng: X=V^* Z V \mbox{ for some isometry $V\in\FF^{m\times n}$ and } Z\in \Gamma(m) \}.
\end{equation}
 In the case that $\Gamma$ is closed with respect to direct sums, 
 it is straightforward to verify that $\cC= \big(\cC(n)\big)_n$ is a matrix convex set which contains
  $\Gamma$.  On the other hand, $\cC$ must be contained in any matrix
 convex set containing $\Gamma$.
Hence we conclude:

\begin{prop}
 \label{prop:semialghull}\rm
 If $\Gamma$ is closed with respect to direct sums, then 
$\cC$ is 
its matrix convex hull.\looseness=-1
\end{prop}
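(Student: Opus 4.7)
The plan is to verify three things: (i) $\cC\supseteq\Gamma$, (ii) $\cC$ is matrix convex, and (iii) $\cC$ is contained in every matrix convex set that contains $\Gamma$. Together these show $\cC$ is the smallest matrix convex set containing $\Gamma$, i.e., $\cC=\mco\Gamma$.

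First I would handle containment. Given $Z\in\Gamma(n)$, take $m=n$ and $V=I_n$, which is an isometry $\FF^n\to\FF^n$; then $V^*ZV=Z\in\cC(n)$. Thus $\Gamma(n)\subset\cC(n)$ for each $n$. Next, for matrix convexity, I would check closure under direct sums and closure under simultaneous isometric conjugation separately. For direct sums, suppose $X\in\cC(n)$ and $X'\in\cC(n')$, say $X=V^*ZV$ with $Z\in\Gamma(m)$ and isometry $V\in\FF^{m\times n}$, and similarly $X'=(V')^*Z'V'$ with $Z'\in\Gamma(m')$. The hypothesis that $\Gamma$ is closed under direct sums gives $Z\oplus Z'\in\Gamma(m+m')$, and $V\oplus V'\in\FF^{(m+m')\times(n+n')}$ is again an isometry, so
\[
X\oplus X' = (V\oplus V')^*(Z\oplus Z')(V\oplus V') \in \cC(n+n').
\]
This is the only place the direct sum hypothesis on $\Gamma$ is used. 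For isometric conjugation, if $X\in\cC(n)$ with $X=V^*ZV$ as above and $W\in\FF^{n\times k}$ is an isometry, then $VW\in\FF^{m\times k}$ is an isometry (composition of isometries is an isometry), and
\[
W^*XW = W^*V^*ZVW = (VW)^*Z(VW) \in \cC(k),
\]
giving the desired closure.

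Finally, for minimality, let $\cM=(\cM(n))_n$ be any matrix convex set with $\Gamma\subset\cM$. Given $X\in\cC(n)$ write $X=V^*ZV$ with $Z\in\Gamma(m)\subset\cM(m)$ and $V\in\FF^{m\times n}$ an isometry. Closure of $\cM$ under simultaneous isometric conjugation yields $X=V^*ZV\in\cM(n)$, so $\cC\subset\cM$. Combined with the previous two steps this shows $\cC$ is the matrix convex hull of $\Gamma$.

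Honestly, there is no serious obstacle here: the definition of $\cC$ in \eqref{eq:Cp} is tailored to make all three conditions fall out by direct bookkeeping. The only nontrivial ingredient is noticing that closure under direct sums of $\Gamma$ is exactly what is needed to guarantee the direct-sum closure of $\cC$, while isometric-conjugation closure of $\cC$ is automatic from the closure of the class of isometries under composition, and hence requires no assumption on $\Gamma$ beyond what is in the definition of a free set.
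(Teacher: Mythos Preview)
Your proof is correct and follows exactly the approach sketched in the paper: the paragraph preceding the proposition simply asserts that $\cC$ is matrix convex, contains $\Gamma$, and lies in any matrix convex set containing $\Gamma$, and you have supplied the routine verifications of these three facts. The only (harmless) imprecision is in your final remark: the proposition assumes only that $\Gamma$ is closed under direct sums, not that it is a free set, and indeed your argument for isometric-conjugation closure of $\cC$ uses no hypothesis on $\Gamma$ whatsoever.
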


\subsection{Topological Properties of the Matrix Convex Hull}
  A natural norm on $\smatng$ is given by
 \[
  \|X\|^2 =\sum_{j=1}^g \|X_j\|^2
 \]
 for  $X=(X_1,\dots,X_g)\in \smatng.$

\label{sec:topo}
  The subset $\cS=(\cS(n))_n$ of $\smatg$ is \df{open} if each $\cS(n)$ is open. 

\begin{lemma}\label{lem:open}\rm
 If the open set $\cS\subseteq\smatg$ is closed with respect to direct sums, 
 then $\mco \cS$ is open.
\end{lemma}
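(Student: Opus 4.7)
The plan is to use Proposition \ref{prop:semialghull}, which identifies $\mco\cS$ with the concrete set $\cC$ built from isometric conjugations of elements of $\cS$. Fix $n$ and a point $X\in \mco\cS(n)$; by the proposition, there exist $m\in\N$, an isometry $V\in\FF^{m\times n}$, and $Z\in\cS(m)$ with $X=V^*ZV$. I want to exhibit an explicit norm-neighborhood of $X$ contained in $\mco\cS(n)$.

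The key trick is that, rather than perturbing the isometry $V$, I keep $V$ fixed and perturb $Z$. Since $V^*V=I_n$, for any self-adjoint perturbation $\Delta\in\smatng$ the tuple
\[
Z':=Z+V\Delta V^*=\bigl(Z_1+V\Delta_1 V^*,\dots,Z_g+V\Delta_g V^*\bigr)
\]
is again in $\smatmg$, and a direct computation gives $V^*Z'V=V^*ZV+\Delta=X+\Delta$. So once $Z'$ still lies in $\cS(m)$, the perturbed point $X+\Delta$ lies in $\mco\cS(n)$.

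Because $V$ is an isometry, $\|V\Delta_j V^*\|\le \|\Delta_j\|$ for each coordinate, and hence $\|Z'-Z\|\le \|\Delta\|$ in the norm on $\smatmg$. Openness of $\cS(m)$ provides $\epsilon>0$ with the $\epsilon$-ball about $Z$ contained in $\cS(m)$; then for every $\Delta$ with $\|\Delta\|<\epsilon$ we get $Z'\in\cS(m)$, so $X+\Delta=V^*Z'V\in\mco\cS(n)$. This shows the $\epsilon$-ball around $X$ in $\smatng$ sits inside $\mco\cS(n)$, proving each level $\mco\cS(n)$ is open, i.e.\ $\mco\cS$ is open.

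I do not anticipate a real obstacle: the only substantive ingredient is the identification of $\mco\cS$ with $\cC$ from Proposition \ref{prop:semialghull} (which uses the direct-sum hypothesis), and the elementary fact that the linear surjection $Z\mapsto V^*ZV$ admits the norm-one right inverse $\Delta\mapsto V\Delta V^*$ when $V$ is an isometry. Everything else is bookkeeping with the fixed coordinate norm on $\smatng$.
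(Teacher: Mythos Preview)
Your proof is correct and is essentially the same as the paper's: the paper writes $Z$ in block form relative to $\operatorname{ran} V \oplus (\operatorname{ran} V)^\perp$ and replaces the $(1,1)$-block $X$ by $Y$, which is exactly your $Z' = Z + V\Delta V^*$ with $\Delta = Y - X$ written in coordinates. The norm estimate and the appeal to Proposition~\ref{prop:semialghull} are identical.
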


\begin{proof}
  To show that $\mco \cS(n)$ is open, let $X\in \mco \cS(n)$ be given. By Proposition \ref{prop:semialghull},
 there exists an $m\in\N,$ a $Z\in\cS$ and an isometry $V:\FF^n\to\FF^m$ such that
 $X=V^*ZV$.  Because $\cS(m)$ is open, there exists an $\epsilon>0$ such that
 if $\|W-Z\|<\epsilon$, then $W\in \cS(m)$.   Now suppose $Y\in\smatng$ and $\|Y-X\|<\epsilon$. 
 Writing,
\[
  Z = \begin{pmatrix} X & \beta \\ \beta^* & \delta \end{pmatrix}
\]
  with respect to the decomposition of $\FF^m$ as the range of $V$ direct sum its orthogonal 
 complement, let
\[
  W = \begin{pmatrix} Y & \beta \\ \beta^* & \delta \end{pmatrix}.
\]
 Thus $W\in\cS(m)$ and, by another application of Proposition \ref{prop:semialghull}, 
  $Y= V^* W V \in \mco \cS(n)$. Hence $\mco \cS(n)$ is open. 
\end{proof}

  Let $\cmco \Gamma$ denote the closure of the convex hull of the free set $\Gamma$,
    i.e.,
  \[
  \cmco \Gamma = \left ( \overline { \mco \Gamma(n)} \right )_n.
  \]

\begin{lem}
 \label{lem:closureofmco}\rm
 If $K=(K(n))_n$ is a matrix convex set, then $\overline{K}=(\overline{K(n)})_n$ is
 also matrix convex. Here $\overline{K(n)}$ is the closure of $K(n)$
 in $\smatng$.
\end{lem}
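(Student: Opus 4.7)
The plan is to verify the two defining properties of matrix convexity, namely closure under direct sums and closure under simultaneous isometric conjugation, for the sequence $\overline{K}=(\overline{K(n)})_n$. Both will follow from the fact that these two operations are continuous with respect to the norm on $\smatg$ introduced in Section~\ref{sec:topo}, so that limits of elements in $K$ satisfying the required identities still satisfy them.

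First I would handle direct sums. Given $A\in \overline{K(n)}$ and $B\in \overline{K(m)}$, pick sequences $A^{(k)}\in K(n)$ and $B^{(k)}\in K(m)$ with $A^{(k)}\to A$ and $B^{(k)}\to B$ in norm. By matrix convexity of $K$, the direct sum $A^{(k)}\oplus B^{(k)}$ lies in $K(n+m)$. A direct estimate on each coordinate shows
\[
\|(A^{(k)}\oplus B^{(k)}) - (A\oplus B)\|^2 = \sum_{j=1}^g \|A_j^{(k)}\oplus B_j^{(k)} - A_j\oplus B_j\|^2 \le \|A^{(k)}-A\|^2 + \|B^{(k)}-B\|^2,
\]
so $A^{(k)}\oplus B^{(k)}\to A\oplus B$, forcing $A\oplus B\in \overline{K(n+m)}$.

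Next I would handle isometric conjugation. Fix $m\le n$, an isometry $V:\FF^m\to\FF^n$, and an $A\in \overline{K(n)}$ with an approximating sequence $A^{(k)}\in K(n)$. Matrix convexity of $K$ gives $V^*A^{(k)}V\in K(m)$, and the submultiplicativity of the operator norm applied coordinatewise yields
\[
\|V^*A^{(k)}V - V^*AV\| \le \|V\|^2\,\|A^{(k)}-A\| = \|A^{(k)}-A\|\longrightarrow 0,
\]
so $V^*AV\in \overline{K(m)}$. Combining the two items establishes that $\overline{K}$ is matrix convex.

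There is no genuine obstacle here; the only thing to be careful about is that the norm on $\smatng$ is a sum over the $g$ coordinates, so continuity of both operations must be verified entry by entry before summing, but this is a routine bookkeeping step rather than a real difficulty.
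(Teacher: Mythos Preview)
Your proof is correct and follows essentially the same approach as the paper's: approximate by sequences from $K$, use that direct sums and isometric compressions of elements of $K$ remain in $K$, and pass to the limit using continuity of these operations. The only difference is cosmetic---you supply explicit norm estimates where the paper simply asserts convergence.
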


\begin{proof}
  To see that $\overline{K}$ is closed with respect to direct sums, 
  suppose $X\in\overline{K(n)}$ and $Y\in\overline{K(m)}$. 
  There exists sequences $(X^\ell)$ and $(Y^\ell)$ from
 $K(n)$ and $K(m)$ converging to $X$ and
 $Y$ respectively. It follows that $X^\ell\oplus Y^\ell \in K(n+m)$
 converges to $X\oplus Y$ and thus $X\oplus Y\in \overline{K(n+m)}$.

  To see that $\overline{K}$ is closed with respect to simultaneous
 isometric conjugation, suppose $X\in \overline{K(n)}$ and
  $V:\FF^m\to\FF^n$ is an isometry.  There exists
  a sequence $(X^\ell)$ from $K(n)$ which converges to $X$.
 Thus, the sequence $(V^* X^\ell V)$ lies in $K(m)$ and converges to
  $V^*XV$. Thus $V^*XV \in \overline{K(m)}$ and the proof is complete.
\end{proof}

\begin{lem}
\label{lem:closed}\rm
  Suppose $\Gamma$ is a free set. If each $\Gamma(n)$ is compact, then 
  for each $m$, $\mco\Gamma(m)$ is naturally a nested increasing union
 of compact convex sets. 
\end{lem}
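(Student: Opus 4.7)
The plan is to combine Proposition \ref{prop:semialghull} with a truncation-and-convex-hull argument. Since a free set is closed under direct sums, that proposition gives
\[
\mco\Gamma(m) \;=\; \bigcup_{k\ge m}\bigl\{V^*ZV : V\in\FF^{k\times m}\text{ an isometry},\ Z\in\Gamma(k)\bigr\}.
\]
For each integer $N\ge m$, truncate the union at $k=N$ to obtain
\[
S_N(m) \;:=\; \bigcup_{k=m}^{N}\bigl\{V^*ZV : V\in\FF^{k\times m}\text{ an isometry},\ Z\in\Gamma(k)\bigr\},
\]
with terms for which $\Gamma(k)=\emptyset$ omitted. The set of $k\times m$ isometries is a compact subset of $\FF^{k\times m}$ (it is closed since $V^*V=I_m$ is a closed condition, and bounded since each column has unit norm), each $\Gamma(k)$ is compact by hypothesis, and $(V,Z)\mapsto V^*ZV$ is continuous. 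Hence each inner term is compact, and $S_N(m)$ is a finite union of compact sets, so compact.

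Next, I would set $L_N(m):=\conv S_N(m)$, the classical convex hull inside the finite-dimensional real vector space $\smat_m^g$. Carath\'eodory's theorem realizes $L_N(m)$ as the continuous image of the compact set $\Delta^d\times S_N(m)^{d+1}$, where $d=\dim_\R\smat_m^g$, so $L_N(m)$ is compact, and it is convex by construction. The inclusion $S_N(m)\subseteq S_{N+1}(m)$ yields $L_N(m)\subseteq L_{N+1}(m)$. Since $\mco\Gamma(m)$ is matrix convex, hence classically convex by \eqref{eq:mconv}, and it contains every $S_N(m)$, one has $L_N(m)\subseteq\mco\Gamma(m)$. Conversely, $\bigcup_N L_N(m)\supseteq\bigcup_N S_N(m)=\mco\Gamma(m)$, and equality follows.

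The one point that requires care is that the truncation $S_N(m)$ itself need not be convex: a scalar convex combination of $V_1^*Z_1V_1$ and $V_2^*Z_2V_2$ with $Z_i\in\Gamma(k_i)$ naturally lifts to $W^*(Z_1\oplus Z_2)W$ with $Z_1\oplus Z_2\in\Gamma(k_1+k_2)$, which would force a jump to lifting dimension $k_1+k_2$. Passing from $S_N(m)$ to the classical convex hull $L_N(m)$ sidesteps this doubling-of-dimension issue while preserving compactness via Carath\'eodory, and this is the only genuinely technical step in the argument.
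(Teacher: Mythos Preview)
Your argument is correct and follows essentially the same plan as the paper's: both exhibit $\mco\Gamma(m)$ as an increasing union of convex hulls of the ``compression sets'' $\{V^*ZV: Z\in\Gamma(k),\ V\text{ an isometry}\}$, with compactness of each stage coming from Carath\'eodory together with compactness of the set of isometries and of each $\Gamma(k)$.

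The one genuine difference is in how the nesting and the convex hulls interact. The paper works with the single-level sets $P_n(m)=\{V^*XV:X\in\Gamma(n)\}$ and $E_n(m)=\conv P_n(m)$, and its key structural observation is the inclusion $E_n(m)\subseteq P_{n(\alpha+1)}(m)$: a convex combination of at most $\alpha+1$ compressions from level $n$ is itself a single compression at level $n(\alpha+1)$, via the direct-sum-and-stacked-isometry construction you allude to in your final paragraph. This absorbs the convex hull back into the compression picture and simultaneously yields $\bigcup_n E_n(m)=\bigcup_n P_n(m)$. You instead take cumulative unions $S_N(m)=\bigcup_{k\le N}P_k(m)$ so that nesting is automatic, and you treat $\conv S_N(m)$ as compact purely by the classical ``convex hull of a compact set in finite dimensions is compact'' consequence of Carath\'eodory. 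Your route is slightly more elementary and sidesteps the lift-to-higher-level trick; the paper's route, on the other hand, makes explicit that the classical convex hull at each stage is already realized by matrix-convex combinations, which is a bit more informative about the structure of $\mco\Gamma$.
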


\begin{proof}
For each $n\ge m$, let
   $$P_n(m)=\{V^* X V\mid V:\FF^m\to \FF^n \text{ is an isometry}, \   X\in\Gamma(n)\}\subseteq\mco\Gamma(m).$$
  Let $E_n(m)\subseteq \mco \Gamma(m)$ denote the (ordinary) convex hull of $P_n(m)$. 
   By Caratheodory's convex hull theorem \cite[Theorem I.2.3]{Bar02},
  $E_n(m)$ is a subset of $P_{n(\alpha+1)}(m)$ (where $\alpha$ is the dimension of 
   $\mathbb S_m$).  Since $P_n(m)$ is compact (being the image of the compact set
   $\{m\times n$ isometries$\}\times \Gamma(n)$ under the continuous map $(V,X)\mapsto
   V^*XV$), then so is $E_n(m)$.
  We have,
\[
  \mco \Gamma(m) = \bigcup_{n\geq m} P_n(m)= \bigcup_{n\geq m} E_n(m).
\]
  Thus, $\mco \Gamma(m)$ is the nested increasing union of a canonical sequence of compact convex sets.
\end{proof}

\subsection{Basic Definitions. Operator Level}
All the notions discussed above have  natural counterparts on infinite-dimensional Hilbert spaces.

  Fix a separable Hilbert space $\cK$ and let 
  $\LK$ denote the \df{lattice of subspaces of} $\cK$.  For a $K\in\LK$, 
 let $\smatkg$ denote $g$-tuples $X=(X_1,\dots,X_g)$
  of self-adjoint operators on $K$.
   A collection $\Gamma=(\Gamma(K))_K$ where $\Gamma(K)\subset \smatkg$ 
  for each $K\leq\cK,$ is a \df{free operator set}
  if it is closed under direct sums and with
  respect to simultaneous conjugation by unitary operators.
  If in addition it is closed with respect to simultaneous
 conjugation by  isometries $V:H\to K$, where $H, K\in\LK$,
 then $\Gamma$ is \df{operator convex}. 

  Note that $(\smatkg)_K$ is itself a 
  free operator set which will be henceforth denoted
  by $\smatog$.
 Given a symmetric free matrix polynomial $p$  with $p(0)\succ0$, let
\[
 \cO_p = \{X\in \smatog: p(X)\succeq 0\}
\]
be the \df{operator free semialgebraic set} defined by $p$.
  It is easy to see that $\cO_p$ is a free operator set. 
  For $K\in\LK$, we write 
  \[
  \cO_p(K)=\{
  X\in\smatkg: p(X)\succeq0
  \}.
  \]
  A  free operator semialgebraic set $\Gamma$ 
   is \df{uniformly bounded} if there is a $C\in\R_{>0}$ such that $C-\sum X_j^2 \succeq 0$
  for all $X\in\Gamma$.  

\subsection{The Operator Convex Hull}

Each free polynomial $p$ gives rise to 
 two operator convex hulls.
The \df{operator convex hull of $\cD_p$} is the sequence
  of sets $\mcop= (\mcop(n))_n$ where $X\in\smatng$ is in $\big(\mcop\big)(n)$ if there exists
  a $Z\in\cO_p$ (acting on a Hilbert space $\cK$) and an isometry $V:\FF^n\to\cK$
 such that $X= V^*ZV$. 

 The notion of the \df{(operator) convex hull of $\cO_p$} is defined similarly. Thus
   $\mcoop$ is the sequence of sets $(\mcoop(K))_K$ where, for $K\in\LK$, 
  the tuple  $X\in\smatkg$ 
   is in $\big(\mcoop\big)(K)$ if there exists
  a $Z\in\cO_p$ (acting on the Hilbert space $\cK$) and an isometry $V:K \to\cK$
 such that $X= V^*ZV$.

 Later we will see in Theorem \ref{thm:infinitelasserre}  that 
 $\mcop$ is closed.

\section{Linear Pencils and Matrix Convex Hulls}\label{sec:cp} 
Classical convex sets in $\R^g$ are defined 
as intersections of half-spaces and are thus described by linear functionals.
Matrix convex sets are defined analogously by linear pencils; cf.~\cite{EW97,HM12}.
This section surveys some basic facts about convex hulls and their associated linear pencils.

\subsection{Linear Pencils}
  Given $k\times k$ self-adjoint matrices $A_0,\dots,A_g$, let
\[
  L(x) = A_0 +\sum_{j=1}^g A_j x_j \in \mathbb S_k \langle x \rangle
\]
 denote the corresponding \df{(affine) linear pencil} of \df{size $k.$}
  In the case that $A_0=0$; i.e.,  $A=(A_1,...,A_g) \in \mathbb{S}_k^g$, let
  \[
     \La_A(x)= \sum_{j=1}^g A_j x_j
    \]
    denote the corresponding \df{homogeneous (truly) linear pencil}
    and
  \[
   L_A = I+\La_A
  \]
  the associated \df{monic linear pencil}. 

  The linear pencil can of course be evaluated at a point
 $x\in\mathbb R^g$ in the obvious way, producing the
  Linear Matrix Inequality, $L(x)\succeq 0$.  The solution
 set to this inequality is known as a 
 \df{spectrahedron} or \df{LMI domain} and is obviously a convex semialgebraic set. 
 
  The pencil $L$ is a free object too as it is naturally evaluated on $X \in \smatng$
using (Kronecker's) tensor product
\begin{equation}
 L(X):=  A_0 \otimes I + \sum_{j=1}^g A_j \otimes X_j.
\end{equation}
The free semialgebraic set $\cD_{L}$  
is easily seen to be matrix convex.  We will refer to 
 $\cD_{L}$ 
 as  a \df{free spectrahedron} or
 \df{free LMI domain}  and
say that a free set $\Gamma$ is \df{freely LMI representable}
 if there is a linear pencil $L$ such that $\Gamma =\cD_{L}$.
 In particular, if $\Gamma$ is freely LMI representable with a monic $L_A$, then $0$ is in the interior of $\Gamma$.
 Note too that $\cD_L(1)\subset\mathbb R^g$ is a spectrahedron.

Later we shall also use linear pencils which are based on infinite-dimensional 
operators $A_i$ and the associated pencil $L(x)=A_0+\sum A_j x_j$. In this case the
free set $\cD = (\cD(n))$, where $\cD(n) =\{X\in\smatng : L(X)\succeq 0\}$ is an 
\df{infinite spectrahedron}. We emphasize that the unmodified term free spectrahedron always
requires the $A_i$ to act on a finite-dimensional space.

The following is a special case (see  \cite[\S 6]{HM12}) of a Hahn-Banach separation theorem due to
Effros and Winkler \cite{EW97}. 

\begin{theorem}
 \label{prop:sharp}
  If $\cC=(\cC(n))_{n\in\N} \subseteq\smatg$ is a closed matrix convex set containing $0$ and
  $Y\in\smatmg$ is not in $\cC(m)$, then there
 is a monic linear pencil $L$ of size $m$ such that
  $L(X)\succeq 0$ for all  $X\in\cC$, but
  $L(Y)\not\succeq 0$. 
%
%
\end{theorem}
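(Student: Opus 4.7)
The plan is to prove the statement as a matricial Hahn--Banach separation theorem, i.e., as the non-trivial direction of the matricial bipolar theorem for closed matrix convex sets containing the origin. The reformulation goes as follows. For each integer $k$, define the \emph{matricial polar}
\[
\cC^\circ(k) = \{A=(A_1,\dots,A_g)\in\smatkg : L_A(X)= I_k + \textstyle\sum_j A_j\otimes X_j \succeq 0 \text{ for all } X\in\cC\}.
\]
Each $\cC^\circ(k)$ is a closed convex set in $\smatkg$ containing $0$. The conclusion of the theorem is exactly the assertion that
\[
Y\in\cC(m) \quad\Longleftrightarrow\quad L_A(Y)\succeq 0 \text{ for every } A\in\cC^\circ(m),
\]
i.e., that the matricial bipolar $\cC^{\circ\circ}$ equals $\cC$. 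The $(\Rightarrow)$ inclusion is immediate: a free spectrahedron $\cD_{L_A}$ is matrix convex and contains $\cC$ whenever $A\in\cC^\circ$. So the whole content is to show $(\Leftarrow)$: if $Y\notin\cC(m)$ then some $A\in\cC^\circ(m)$ detects this via $L_A(Y)\not\succeq 0$.

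First I would apply classical (finite-dimensional) Hahn--Banach in the real inner product space $\smatmg$ equipped with the Hilbert--Schmidt pairing $\langle X,Z\rangle = \sum_j \tr(X_j Z_j)$. Since $\cC(m)$ is closed, convex, and does not contain $Y$, there exist $B=(B_1,\dots,B_g)\in\smatmg$ and $\alpha\in\RR$ such that $\sum_j\tr(B_j Z_j) \le \alpha$ for every $Z\in\cC(m)$ and $\sum_j\tr(B_j Y_j) > \alpha$. Because $0\in\cC(m)$, we have $\alpha\ge 0$; after rescaling $B$ (handling $\alpha=0$ by a small perturbation inside $\cC$ if needed, using that $0\in\cC$) we may normalize to $\alpha=1$.

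The main obstacle is the upgrade from this scalar separator $B$ (which only compares traces at level $m$) to a genuine matricial separator $A\in\cC^\circ(m)$ that witnesses positivity at all higher levels. Here the matrix convex structure of $\cC$ is essential, and it is supplied by the following identification. Any unit vector $\xi\in\FF^m\otimes\FF^n$, viewed as an operator $V_\xi\colon\FF^m\to\FF^n$ via the canonical isomorphism, satisfies $V_\xi^* V_\xi\preceq I_m$; conversely every such contractive $V$ arises in this way up to normalization. For an isometry $V\colon\FF^m\to\FF^n$, matrix convexity gives $V^*XV\in\cC(m)$ for every $X\in\cC(n)$, so the scalar bound yields $\sum_j\tr(B_j V^*X_jV)\le 1$. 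A direct computation shows
\[
\langle (B_j\otimes X_j)\xi,\xi\rangle = \tr(B_j\, V_\xi^* X_j V_\xi),
\]
so the scalar trace inequalities for all such $V$ are precisely the eigenvalue-positivity conditions $\bigl\langle \bigl(I+\sum_j B_j\otimes X_j\bigr)\xi,\xi\bigr\rangle\ge 0$. This is the step where matrix convexity (closure under both direct sums and isometric compression), together with $0\in\cC$ (so one can freely pad isometries), converts the pointwise scalar inequality into the full matricial positivity of $L_B$ on $\cC$, i.e., $B\in\cC^\circ(m)$.

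Finally, the same identification at $X=Y$ produces a unit vector $\xi$ (derived from the witness of $\sum_j\tr(B_j Y_j)>1$) such that $\langle L_B(Y)\xi,\xi\rangle<0$, so $L_B(Y)\not\succeq 0$, as required; the pencil $L_B$ is monic of size $m$ by construction. The hardest technical point is carrying out the scalar-to-matricial amplification cleanly while correctly tracking the normalization of $\alpha$ to $1$ and the interplay between unit vectors in $\FF^m\otimes\FF^n$ and isometries $\FF^m\to\FF^n$; the fact that $\cC$ is both closed under direct sums (allowing $n$ to vary) and $0\in\cC(1)$ (allowing arbitrary padding) are exactly what make the amplification valid.
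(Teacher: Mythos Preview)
Your amplification step works in one direction but not the other. You correctly observe that if $B$ is a scalar Hahn--Banach separator normalized so that $\sum_j\tr(B_jZ_j)\le 1$ on $\cC(m)$, then for every unit vector $\xi$ the associated $V_\xi$ is a contraction (since $\|V_\xi\|_{\mathrm{op}}\le\|V_\xi\|_{HS}=1$), hence $V_\xi^*XV_\xi\in\cC(m)$ and the scalar bound yields $L_{-\bar B}(X)\succeq 0$ for all $X\in\cC$. (There is a transpose/conjugate in the tensor identity you quote, and a sign: the pencil you produce is $L_{-\bar B}$, not $L_B$; these are easy to repair.) The genuine gap is the last sentence: from $\sum_j\tr(B_jY_j)>1$ you cannot in general manufacture a unit vector witnessing $L_{-\bar B}(Y)\not\succeq 0$. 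Concretely, take $g=1$, the matrix interval $\cC=\{X:\|X\|\le 1\}$, $m=2$, $Y=\tfrac32 I_2\notin\cC(2)$, and $B=\tfrac12 I_2$. Then $\sup_{Z\in\cC(2)}\tr(BZ)=\|B\|_1=1$ and $\tr(BY)=\tfrac32>1$, so $B$ is a legitimate normalized scalar separator; yet $I-\bar B\otimes Y=I_4-\tfrac34 I_4=\tfrac14 I_4\succ 0$, so your candidate pencil does \emph{not} detect $Y$. In other words, an arbitrary scalar Hahn--Banach functional on $\cC(m)$ need not lift to a matricial separator.

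What is missing is exactly the content of the Effros--Winkler bipolar theorem: one must separate not $Y$ from $\cC(m)$ in $\smatmg$, but rather the pair $(Y,I_m)$ from the matrix cone $\{(V^*XV,\,V^*V):X\in\cC,\ V\text{ arbitrary}\}$ (equivalently, work in the operator system $\mathrm{span}\{1,x_1,\dots,x_g\}$ with the matrix order induced by $\cC$ and separate at the level of $M_m$ of that system). This is precisely what the paper invokes by citing \cite[Theorem 5.4]{EW97}; the only additional remark in the paper's proof is the trivial observation that the coefficients can be symmetrized to lie in $\smatmg$. So your plan diverges from the paper's by attempting a from-scratch proof, and the divergence fails at the step where a scalar functional is asked to do the job of a matrix-valued one.
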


\begin{proof}
 From \cite[Theorem 5.4]{EW97}, there exist $m\times m$ matrices $A_1,\dots,A_g\in \FF^{m\times m}$ such 
  that
\[
 I - \frac12  \Big(\sum A_j \otimes X_j + \big(\sum A_j \otimes X_j\big)^*\Big) \succeq 0
\]
 for all $n$ and $X\in\cC(n)$, but at the same time
\[
 I - \frac12 \Big(\sum A_j \otimes Y_j + \big(\sum A_j \otimes Y_j\big)^*\Big) \not\succeq 0
\]
  Note however, that since $X_j^*=X_j$, it follows that $A_j\otimes X_j + A_j^* \otimes X_j^* = (A_j+A_j^*)\otimes X_j$.
  Thus, it can be assumed that $A\in \smatmg$.  
\end{proof}

Though linear matrix inequalities appear special, the following 
 result from \cite{HM12} says that they actually account for
  matrix convexity  of free semialgebraic sets.

\begin{theorem}
 \label{thm:HMlmirep}
  Fix $p$ a 
   symmetric  real matrix polynomial. 
   If 
   $p(0)\succ 0$  and  the strict positivity set 
   $\gP_p=\{X: p(X)\succ0\}$  of $p$ is bounded, 
then   $\gP_p$ 
       is matrix convex if and only if there is a monic linear pencil  $L$ 
 such that $\gP_p = \gP_L = \{X: L(X)\succ 0\}.$ 
\end{theorem}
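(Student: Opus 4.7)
The plan is to handle the two implications separately. The ``if'' direction is immediate: if $\gP_p = \gP_L$ for a monic linear pencil $L$, then because $L(V^*XV) = (I\otimes V)^* L(X)(I\otimes V)$ for any isometry $V$ and $L(X\oplus Y) = L(X)\oplus L(Y)$, the set $\gP_L$ is closed under direct sums and isometric conjugation, hence matrix convex.

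For the ``only if'' direction, the argument proceeds in two stages. First, reduce to an intersection of free spectrahedra. The closure $\overline{\gP_p}$ is matrix convex by Lemma \ref{lem:closureofmco}, compact at each level by the boundedness hypothesis, and contains $0$ in its interior since $p(0)\succ 0$. Applying the Effros-Winkler separation (Theorem \ref{prop:sharp}) to each point $Y\notin\overline{\gP_p}$ gives
\[
\overline{\gP_p} \;=\; \bigcap_{L\in\mathcal{F}} \cD_L,
\]
where $\mathcal{F}$ is the family of monic linear pencils $L$ with $L\succeq 0$ on $\gP_p$. Thus $\gP_p$ is already known to be an intersection of free spectrahedra, and the remaining task is to compress this intersection to a single pencil.

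The second stage exploits matrix convexity together with the polynomial structure of $p$ to extract a single defining pencil. The idea is local-to-global at the boundary. Pick a boundary point $X^*\in\partial\gP_p(n)$ and a nonzero $v\in\ker p(X^*)$. For any tangent direction $H\in\smatng$ with $X^*+tH\in\overline{\gP_p}$ for small $t\ge 0$, the function $\varphi(t):=\langle p(X^*+tH)v,v\rangle$ vanishes at $0$ and is nonnegative for $t\ge 0$. Matrix convexity strengthens this: replacing $X^*$ with an amplification $X^*\otimes I_k$ and testing against vectors supported on different block summands, any higher-order defect $\varphi''(0)<0$ along a genuine tangent direction would produce a matrix-convex-combination violation at some matrix size. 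This rigidity forces $\partial\gP_p$ to be cut out, in a neighborhood of $X^*$, by a truly linear pencil $L_{X^*,v}\in\mathcal{F}$ whose value at $X^*$ is singular on $v$. A Noetherian/compactness argument applied to the rank stratification of $\partial\gP_p$ then shows that finitely many such pencils $L_1,\dots,L_N$ detect every boundary point, and their direct sum $L := L_1\oplus\cdots\oplus L_N$ is a monic linear pencil with $\cD_L\supseteq\overline{\gP_p}$ and $\partial\cD_L\supseteq\partial\gP_p$, forcing $\gP_L=\gP_p$.

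The principal obstacle is the local linearization step: rigorously showing that matrix convexity, as opposed to scalar convexity, collapses the polynomial data of $p$ at a boundary point to purely linear data. The analogous statement is false classically: the polynomial $p=1-x_1^2-x_2^4$ from Example \ref{ex:btv} has $\gP_p(1)$ convex but certainly not defined by a linear pencil, so the proof must genuinely use closure under isometric conjugation \emph{for matrices of all sizes}. One expects the argument to pass through a careful curvature and compression analysis, leveraging the amplifications $X^*\mapsto X^*\otimes I_k$ and the availability of isometric embeddings at every size to rule out any nonlinear contribution to the defining pencil. A secondary technicality is ensuring that the ``finitely many boundary pencils'' extracted in the assembly step work uniformly across all matrix sizes $n$.
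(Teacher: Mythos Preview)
The paper does not prove this theorem; it is quoted from \cite{HM12} (Helton--McCullough, \emph{Ann.\ of Math.}\ 2012) and stated without proof. So there is no ``paper's own proof'' to compare against. That said, your proposal should be judged on its own merits, and as written it is not a proof but an outline with the hard part left undone.

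Your first stage (the Effros--Winkler intersection) is fine and is indeed how one starts. The trouble is entirely in the second stage. The ``local linearization step'' is precisely the content of the theorem, and you have only described what you hope happens: ``any higher-order defect $\varphi''(0)<0$ \dots\ would produce a matrix-convex-combination violation at some matrix size'' is an assertion, not an argument. You even flag this yourself (``One expects the argument to pass through a careful curvature and compression analysis''). Without that analysis actually carried out, nothing has been proved. Likewise, the ``Noetherian/compactness'' assembly of finitely many boundary pencils, uniform in $n$, is asserted but not justified; since $\partial\gP_p$ lives in infinitely many sizes simultaneously, ordinary compactness does not obviously apply.

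For orientation: the proof in \cite{HM12} does proceed via a Hessian/curvature analysis, but the execution is substantially more involved than your sketch suggests. One shows that matrix convexity forces a certain \emph{relaxed Hessian} of $p$ (evaluated on directions of the form $H\otimes v$ for generic vectors $v$) to be positive semidefinite, and then a rank/dimension count on the associated middle matrix forces the minimal-degree defining polynomial to have degree at most two. A degree-two symmetric polynomial with $p(0)\succ 0$ is then rewritten as a Schur complement of a monic linear pencil. The step you are missing is exactly this middle-matrix argument linking positivity of the Hessian in all matrix sizes to a hard degree bound; it is not a soft compactness statement, and your sketch does not supply it.
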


\subsection{Pencils and Hulls}
 
\begin{lemma}
 \label{lem:sizemu}\rm
   Let $\cC$ be a matrix convex set. If $L$ is a pencil of size $k$, 
  then $L$ is positive semidefinite on $\cC$ if and only if $L$
  is positive semidefinite on $\cC(k)$. 
\end{lemma}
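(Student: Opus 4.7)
The forward direction is immediate, so I focus on the converse: assume $L \succeq 0$ on $\cC(k)$ and fix $X \in \cC(n)$. The first move is a dimension reduction. Since $\cC$ is closed under direct sums, the tuple $X^{(m)} := X \oplus \cdots \oplus X$ ($m$ copies) lies in $\cC(mn)$, and after the canonical basis permutation of $\FF^k \otimes \FF^{mn} \cong \bigoplus_{i=1}^m (\FF^k \otimes \FF^n)$, the evaluation $L(X^{(m)})$ is unitarily equivalent to $L(X)^{\oplus m}$. Hence it suffices to prove $L(X^{(m)}) \succeq 0$ for some $m$ with $mn \ge k$, so we may assume $n \ge k$ from the outset.

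To test $L(X) \succeq 0$, fix $v \in \FF^k \otimes \FF^n$ and identify it with $V \in \FF^{n \times k}$ via $v = \sum_i e_i \otimes V e_i$, where $\{e_i\}$ is the standard basis of $\FF^k$. Writing $L(x) = A_0 + \sum_j A_j x_j$, a direct tensor calculation yields
\[
\langle L(X) v, v \rangle = \tr(A_0 V^*V) + \sum_{j=1}^g \tr(A_j V^* X_j V).
\]
Now take the polar decomposition $V = UP$, where $P := (V^*V)^{1/2} \succeq 0$ lies in $\mathbb S_k$ and $U:\FF^k \to \FF^n$ is an isometry (extending arbitrarily off the range of $P$, which is possible because $n \ge k$). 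By matrix convexity of $\cC$, the tuple $Y := U^* X U = (U^* X_1 U, \ldots, U^* X_g U)$ lies in $\cC(k)$. Substituting $V^*V = P^2$ and $V^* X_j V = P Y_j P$, and identifying $P \leftrightarrow w \in \FF^k \otimes \FF^k$ through the same isomorphism, the same computation applied to $Y$ gives
\[
\tr(A_0 P^2) + \sum_j \tr(A_j P Y_j P) = \langle L(Y) w, w \rangle.
\]
Therefore $\langle L(X) v, v \rangle = \langle L(Y) w, w \rangle \ge 0$ by the standing hypothesis $L(Y) \succeq 0$. Since $v$ was arbitrary, $L(X) \succeq 0$.

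The only genuine obstacle is the bookkeeping at the polar step: producing an isometry $U:\FF^k\to\FF^n$ forces $n \ge k$, which is precisely why closure of $\cC$ under direct sums (not merely under isometric conjugation) must be invoked at the start. Otherwise the argument is a routine polar-decomposition maneuver combined with the tensor formula for $\langle L(X) v, v\rangle$ in terms of $V$.
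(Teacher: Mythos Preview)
Your proof is correct. The paper reaches the same conclusion by a shorter route: given $v=\sum_{j=1}^k e_j\otimes v_j\in\FF^k\otimes\FF^m$, it lets $H=\operatorname{span}\{v_1,\dots,v_k\}$ and $V:H\hookrightarrow\FF^m$ be the inclusion, so that $\langle L(X)v,v\rangle=\langle L(V^*XV)v,v\rangle$ with $V^*XV\in\cC(\dim H)$ and $\dim H\le k$. No polar decomposition, no trace identities, no preliminary direct-sum step---the only observation needed is that the components $v_j$ of $v$ span an at-most-$k$-dimensional subspace. Your approach trades this for a more algebraic maneuver (polar decomposition $V=UP$ and the identification $v\leftrightarrow V$), which is heavier but has the advantage that your direct-sum reduction to $n\ge k$ handles the edge case $\dim H<k$ (equivalently, $V$ not of full column rank) cleanly, a point the paper's write-up leaves implicit.
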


\begin{proof}
  Suppose $L$ is positive semidefinite on $\cC(k)$ and let $m$
  and $X\in\cC(m)$ be given. Fix a vector $v\in \FF^k \otimes \FF^m$.
  Letting $\{e_1,\dots,e_k\}$ denote the standard orthonormal basis for
 $\FF^k$, there exist vectors $v_1,\dots,v_k\in\FF^m$ such that
\[
 v =\sum_{j=1}^k e_j\otimes v_j.
\]
 Let $H$ denote the span of $\{v_1,\dots,v_k\}$ and let $V:H\to \FF^m$
 denote the inclusion mapping. It follows that
\[
 \begin{split}
 \langle L(X)v,v \rangle & =  \langle L(X) I\otimes V v, I\otimes V v\rangle\\
  &  =  \langle L(V^*XV) v,v \rangle.
 \end{split}
\]
 Since $V^* X V \in\cC(k)$, it follows that $\langle L(X)v,v\rangle \ge 0$.
 Hence $L(X)\succeq 0$ and the proof is complete.
\end{proof}

\begin{prop}\label{prop:useless}
Let $L$ be a $\mu\times\mu$ linear pencil. Then 
\[L|_{\cD_p}\succeq0 \quad\iff\quad L|_{\mco \cD_p(\mu)}\succeq0.\]
\end{prop}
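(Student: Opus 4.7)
The proposition is an almost immediate consequence of the two preceding results, Proposition \ref{prop:semialghull} and Lemma \ref{lem:sizemu}, so the plan is simply to invoke each in the appropriate direction.

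For the forward direction, suppose $L|_{\cD_p} \succeq 0$ and let $X \in \mco \cD_p(\mu)$. Since $\cD_p$ is a free set, hence closed with respect to direct sums, Proposition \ref{prop:semialghull} applies and produces an integer $m$, a tuple $Z \in \cD_p(m)$, and an isometry $V: \FF^\mu \to \FF^m$ with $X = V^* Z V$. A direct computation using the Kronecker structure of pencil evaluation gives
\[
 L(V^* Z V) = A_0 \otimes I_\mu + \sum_{j} A_j \otimes V^* Z_j V = (I \otimes V)^* L(Z)(I \otimes V),
\]
using that $V^*V = I_\mu$ to write $A_0\otimes I_\mu = (I\otimes V)^*(A_0 \otimes I_m)(I\otimes V)$. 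Since $L(Z) \succeq 0$ by hypothesis, conjugation by the isometry $I \otimes V$ preserves positive semidefiniteness and $L(X) \succeq 0$.

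For the reverse direction, suppose $L|_{\mco\cD_p(\mu)} \succeq 0$. The matrix convex hull $\mco\cD_p$ is matrix convex, and $L$ has size $\mu$, so Lemma \ref{lem:sizemu} applies and upgrades positivity on the level-$\mu$ slice to positivity on all of $\mco\cD_p$. Since $\cD_p \subseteq \mco\cD_p$, restriction then yields $L|_{\cD_p} \succeq 0$.

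There is no real obstacle: the content is packaged in the two lemmas cited. The only thing requiring a moment of care is the tensor bookkeeping in the forward direction when pulling $I_\mu$ through the isometry to realize $L(V^*ZV)$ as a conjugation of $L(Z)$; once that identity is in place, both implications are one line each.
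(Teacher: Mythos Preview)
Your argument is correct and follows essentially the same route as the paper's proof: the paper notes that $L$ is positive semidefinite on $\cD_p$ if and only if it is on $\mco\cD_p$ (the ``evident'' step, which you unpack explicitly via Proposition~\ref{prop:semialghull} and the tensor computation), and then applies Lemma~\ref{lem:sizemu} to reduce to level~$\mu$. Your version merely splits the two implications and makes the isometric-conjugation identity $L(V^*ZV)=(I\otimes V)^*L(Z)(I\otimes V)$ visible rather than tacit.
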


\def\La{\Lambda}

Of course, the downside of Proposition \ref{prop:useless} is that it does not give
bounds on the isometries needed in $\mco \cD_p(\mu)$ (as they appear in Equation \eqref{eq:Cp}).

\begin{proof}
 Evidently $L$ is positive semidefinite on $\cD_p$ if and only if $L$ is
  positive semidefinite on $\mco\cD_p$. An application of
 Lemma \ref{lem:sizemu} completes the proof. 
\end{proof}

Just like the closed convex hull of a subset $C$ of $\R^g$ can be written as an intersection
of half-spaces containing $C$, closed matrix convex hulls are intersections of 
free spectrahedra.

\begin{cor}\label{cor:hb}
Let $p$ be a symmetric free polynomial $($with as usual $p(0)\succ 0)$.
For $n\in\N$, the set
$\cmco \cD_p(n)$ consists of all $g$-tuples $Z \in\smatng$ satisfying
$ L(Z)\succeq0$ for all $n\times n$ 
monic linear pencils $L$ 
with $\cD_L\supseteq\cD_p$ $($equivalently $L|_{\cD_p}\succeq0)$.
\end{cor}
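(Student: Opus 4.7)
The plan is to prove the corollary as a direct consequence of the Effros--Winkler separation theorem (Theorem \ref{prop:sharp}), after verifying the two easy set-theoretic prerequisites that $\cmco\cD_p$ is matrix convex and contains the origin.

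For the $(\subseteq)$ inclusion, I would argue as follows. Suppose $L$ is a monic linear pencil with $L|_{\cD_p}\succeq 0$, equivalently $\cD_L\supseteq\cD_p$. Since $\cD_L$ is matrix convex (as observed after the definition of a free spectrahedron), it contains $\mco\cD_p$. Moreover, $\cD_L$ is closed because $L$ is continuous and the cone of positive semidefinite matrices is closed. Hence $\cD_L\supseteq\cmco\cD_p$, so $L(Z)\succeq 0$ for every $Z\in\cmco\cD_p(n)$. No size restriction on $L$ is used here.

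For the $(\supseteq)$ inclusion, I would proceed by contrapositive. Suppose $Z\in\smatng$ does not lie in $\cmco\cD_p(n)$; I want to produce an $n\times n$ monic linear pencil $L$ with $\cD_L\supseteq\cD_p$ but $L(Z)\not\succeq 0$. First note that $\cmco\cD_p$ is matrix convex by Lemma \ref{lem:closureofmco} applied to the matrix convex set $\mco\cD_p$. Because $p(0)\succ 0$, we have $0\in\cD_p(1)\subseteq\cmco\cD_p(1)$. Therefore Theorem \ref{prop:sharp} applies to the closed matrix convex set $\cmco\cD_p$ and the point $Z\in\smatng\setminus\cmco\cD_p(n)$, and yields a monic linear pencil $L$ \emph{of size $n$} such that $L(X)\succeq 0$ for all $X\in\cmco\cD_p$ while $L(Z)\not\succeq 0$. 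In particular $\cD_L\supseteq\cmco\cD_p\supseteq\cD_p$, so $L$ is one of the pencils in the statement, and the contrapositive is complete.

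There is essentially no obstacle here: both directions are immediate once Theorem \ref{prop:sharp} is in hand, and the hypothesis $p(0)\succ 0$ is precisely what guarantees $0\in\cmco\cD_p$ so that the monic form of the separating pencil in Theorem \ref{prop:sharp} is available. The only point worth double-checking is that the size of the separating pencil matches the level $n$ at which $Z$ lives, which is exactly what Effros--Winkler delivers.
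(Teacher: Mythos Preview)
Your proof is correct and follows essentially the same approach as the paper: both directions hinge on the Effros--Winkler separation theorem (Theorem \ref{prop:sharp}) applied to the closed matrix convex set $\cmco\cD_p$. Your argument is slightly more streamlined: for $(\subseteq)$ you use the abstract facts that $\cD_L$ is matrix convex and closed rather than the paper's explicit computation with isometries and limits, and for $(\supseteq)$ you observe directly that Theorem \ref{prop:sharp} gives $L\succeq 0$ on all of $\cmco\cD_p\supseteq\cD_p$, thereby bypassing the paper's detour through Proposition \ref{prop:useless}.
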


\begin{proof} 
This corollary is a version of the matricial Hahn-Banach Theorem \ref{prop:sharp}.
Indeed, if $Z\not \in \cmco \cD_p(n),$ then by 
these matricial Hahn-Banach theorems 
there is an $n\times n$ pencil $L$ with $L(Z)\not\succeq0$ and $L|_{\mco \cD_p(n)}\succeq0$.
The latter implies by Proposition \ref{prop:useless} that $L|_{\cD_p}\succeq0$,
that is, $\cD_L \supset \cD_p$.

To prove the reverse inclusion,  suppose 
 $L$ is $n\times n$ with $L|_{\cD_p}\succeq0$.
If
  $Z\in\cmco \cD_p(n)$  
  there is a sequence    $Z_i \in\mco \cD_p(n)$  converging to $Z$.
Such $Z_i$ must have  the  form 
 $Z_i =V_i ^*X_i V_i $, with $X_i \in\cD_p$ and  $V_i $ is an isometry.
Thus
\[
L(V_i ^*X_i V_i )=(I \otimes V_i )^* L(X_i )(I\otimes V_i )\succeq0.
\]
Since $L(Z_i ) \to L(Z)$, we have
 $L \succeq 0$ on $ \cmco \cD_p(n)$, and so we are done. 
\end{proof}

\begin{cor}\label{cor:LMI1}
Suppose $\ell$ is an affine linear function, and $L$ is a linear pencil.
Then
\[
\ell|_{\cD_L(1)}\geq0 \quad\iff\quad \ell|_{\cD_L}\succeq0.
\]
\end{cor}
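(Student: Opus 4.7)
The plan is to reduce this immediately to Lemma \ref{lem:sizemu}. The reverse direction ($\ell|_{\cD_L}\succeq 0 \Rightarrow \ell|_{\cD_L(1)}\geq 0$) is trivial: evaluating a matrix-positive affine linear pencil at scalar points in $\cD_L(1) \subset \cD_L$ yields nonnegative scalars.

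For the forward implication, observe that an affine linear function $\ell(x) = c_0 + \sum_{j=1}^g c_j x_j$ with real scalar coefficients is precisely an affine linear pencil of size $k=1$, in the sense of Section \ref{sec:cp}. Its evaluation at $X \in \smatng$ is the $n\times n$ self-adjoint matrix $c_0 I_n + \sum_j c_j X_j$. Moreover, the free spectrahedron $\cD_L$ is matrix convex (as remarked just after the definition of $\cD_L$). Hence Lemma \ref{lem:sizemu} applies directly with $\cC = \cD_L$ and $k=1$, giving
\[
\ell \succeq 0 \text{ on } \cD_L \quad\Longleftrightarrow\quad \ell \succeq 0 \text{ on } \cD_L(1).
\]
Since at scalar points positive semidefiniteness of a $1\times 1$ matrix is just nonnegativity, the right-hand side is exactly $\ell|_{\cD_L(1)} \geq 0$, completing the proof.

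There is essentially no obstacle here: the corollary is the specialization of Lemma \ref{lem:sizemu} to pencils of size one applied to the matrix convex set $\cD_L$. The only thing worth flagging is that one should remark explicitly that $\cD_L$ is matrix convex so that Lemma \ref{lem:sizemu} is applicable; this is immediate from the tensor-product form of the evaluation $L(V^*XV) = (I\otimes V)^* L(X)(I\otimes V)$.
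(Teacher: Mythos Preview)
Your proof is correct and is essentially the route the paper itself points to: the authors remark that the corollary is an obvious consequence of Proposition~\ref{prop:useless} (whose proof is Lemma~\ref{lem:sizemu}), which is exactly your reduction with $k=1$ and $\cC=\cD_L$. The paper then adds a short self-contained argument---compressing $X\in\cD_L(n)$ with $\ell(X)\not\succeq 0$ to a single unit eigenvector $v$ of negative eigenvalue, so that $v^*Xv\in\cD_L(1)$ while $\ell(v^*Xv)<0$---but this is just the $k=1$ case of the proof of Lemma~\ref{lem:sizemu} written out explicitly.
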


\begin{proof}
While  this is an obvious corollary of Proposition \ref{prop:useless}, let
us present a short and independent self-contained argument.
The implication $(\Leftarrow)$ is obvious.
For the converse assume  $X\in\cD_L(n)$ with $\ell(X)\not\succeq0$. Let
$v$ be a unit eigenvector of $\ell(X)$ with negative eigenvalue.
For 
$v^*Xv:=(v^*X_1v,\ldots,v^*X_gv)\in\R^g$ 
we have
$$L(v^*Xv)=(I\otimes v)^*L(X)(I\otimes v)\succeq0,$$
i.e., $v^*Xv\in \cD_L(1)$, and
\[\ell(v^*Xv)=v^*\ell(X)v<0.\qedhere\]
\end{proof}

\section{Projections of Free Spectrahedra: Free Spectrahedrops}\label{sec:4}

 Let $L$ be a linear pencil in the variables $(x_1,\dots,x_g;y_1,\dots,y_h)$,
\[
 L = D + \sum_{j=1}^g A_j x_j +\sum_{\ell=1}^h B_\ell y_\ell.
\]
 The set
\[
 \proj_x \cD_L(1) = \{x\in\mathbb R^g: \exists\, y\in\mathbb R^h \mbox{ such that }
    L(x,y) \succeq 0\}
\]
 is known as a \df{spectrahedral shadow} or is a \df{semidefinite programming (SDP) representable set} \cite{BPR13}
  and the representation afforded by $L$ is an \df{SDP representation}.  SDP representable sets are evidently
 convex and 
 lie in a middle ground between LMI representable sets and general convex sets. 
 They play an important role in convex optimization. 
 In the case that $S\subset \mathbb R^g$ is closed  semialgebraic and with some mild
 additional hypothesis, it is proved in \cite{HN10} based upon the Lasserre--Parrilo construction 
 (\cite{Las,Par06})
that the convex hull of $S$ is SDP representable.

  Given a linear pencil $L$, let $\proj_x \cD_L$ denote the free set
\[
 \proj_x \cD_L = \bigcup_{n\in\N} \{X\in\smatng : \exists\, Y\in\smatnh
  \mbox{ such that } L(X,Y)\succeq 0\}.
\]
  We will call a set of the form $\proj_x\cD_L$ a \df{free spectrahedrop} 
  or a \df{freely SDP representable set} or even a \df{free spectrahedral shadow}.
  Thus a free spectrahedrop is  a coordinate projection of a free spectrahedron.

\begin{prop}
 \label{prop:shadowclosedrestrict}
   Free spectrahedrops are matrix convex. In particular, they  are closed with respect to restrictions to reducing subspaces. 
\end{prop}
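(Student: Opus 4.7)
The plan is to verify the two defining closure properties of matrix convexity directly from the definition of $\proj_x\cD_L$, and then observe that the reducing subspace statement is an immediate corollary.

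Write $L = D + \sum_j A_j x_j + \sum_\ell B_\ell y_\ell$, so that membership $X \in \proj_x\cD_L(n)$ is witnessed by a tuple $Y\in\smatnh$ with $L(X,Y)\succeq 0$. For closure under direct sums, suppose $X\in\proj_x\cD_L(n)$ and $X'\in\proj_x\cD_L(m)$ with respective witnesses $Y,Y'$. The natural candidate witness for $X\oplus X'$ is $Y\oplus Y'$. Because the tensor-product evaluation of $L$ sends componentwise direct sums of tuples to a direct sum of the evaluations (up to the standard reordering of tensor factors), one obtains $L(X\oplus X', Y\oplus Y') = L(X,Y)\oplus L(X',Y')\succeq 0$.

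For closure under simultaneous isometric conjugation, fix $X\in\proj_x\cD_L(n)$ with witness $Y$, and an isometry $V\colon\FF^m\to\FF^n$. The candidate witness is $V^*YV := (V^*Y_1 V,\dots,V^*Y_h V)$. The key identity is
\[
L(V^*XV, V^*YV) \;=\; (I\otimes V)^*\,L(X,Y)\,(I\otimes V),
\]
which relies on $V^*V=I_m$ to handle the constant term $D\otimes I_m = (I\otimes V)^*(D\otimes I_n)(I\otimes V)$; the linear terms in $x$ and $y$ behave analogously. The right-hand side is positive semidefinite, so $V^*XV$ lies in $\proj_x\cD_L(m)$.

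Finally, if $H\subset\FF^n$ is a reducing subspace for $X\in\proj_x\cD_L(n)$, the inclusion $V\colon H\hookrightarrow\FF^n$ is an isometry and the restriction of $X$ to $H$ is precisely $V^*XV$; so the second assertion follows from the first. No serious obstacle is anticipated — the only subtlety is keeping careful track of tensor-factor conventions in the direct-sum step; otherwise the argument is a direct bookkeeping exercise that exploits the fact that all coefficients of the pencil act by tensor product on the argument side.
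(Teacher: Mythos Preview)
Your proof is correct. The paper states this proposition without proof, presumably because the argument is routine; your verification of closure under direct sums and under isometric conjugation via the identities $L(X\oplus X',\,Y\oplus Y')\cong L(X,Y)\oplus L(X',Y')$ and $L(V^*XV,\,V^*YV)=(I\otimes V)^*L(X,Y)(I\otimes V)$ is exactly the intended bookkeeping, and the reducing-subspace statement is indeed the special case where $V$ is the inclusion of the subspace.
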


\begin{example}\rm
 \label{ex:badrestrict}
The second half of  Proposition \ref{prop:shadowclosedrestrict} fails for projections of general free semialgebraic sets.
  As an example, 
    consider
\begin{equation}
\label{eq:exNotRRR}
q=y x^2 y + z x^2 z - 1
\end{equation}
and the projection $\cS$ of  $\cD_{q}$ onto $x$.  Thus,
\[
 \cS =\{X\in\mbS: \exists \ (Y,Z)\in\mbS^2 \mbox{ such that } q(X,Y,Z)\succeq 0\}.
\]
It is easy to show $I_3 \oplus 0_3$
is in $\cS,$  but of course $0_3$  is not.
Incidentally, this gives a simple example of a free semialgebraic set
whose projection is not semialgebraic, in sharp contrast to 
Tarski's transfer principle in
classical real algebraic geometry  \cite{BCR98}.

On the other hand, Proposition \ref{prop:shadowclosedrestrict} implies that,
 for a  linear pencil $L$, projections of $\cD_L$ 
  are closed with respect to restrictions to reducing subspaces. 
Nevertheless, a projection of $\cD_L$ need not be semialgebraic,
cf.~\cite[\S 9]{HM12}.
\end{example}

\def\calK{\mathcal K}

\subsection{Free Spectrahedrops and Monic Lifts}
Recall a free set $\calK$ is a  free spectrahedrop 
if it is a (coordinate) projection of a free spectrahedron,
$\cD_\Lambda$.
The next lemma shows that even when $\Lambda$ is not
 a monic pencil, if $0$ is in the interior of $\calK,$ then $\calK$ 
admits a \emph{monic} LMI lift.

\begin{lem} 
\label{lem:boundedmonicLift}\rm
 If $\calK= \proj_x \cD_\La $ is a free spectrahedrop  containing $0$ in its interior,
 then there exists a monic 
 linear pencil $L(x,y)$ such that 
\begin{equation}\label{eq:lift-bounded}
\calK= \proj_x \cD_L = \{ X\in\smatg \mid \exists Y\in\smath: \, L(X,Y)\succeq0\}.
\end{equation}
If $\cD_\La$ is bounded, then we may further ensure $\cD_L$ is bounded.
\end{lem}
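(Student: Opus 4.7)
The plan is to normalize the constant term of $\La$ to the identity via a sequence of transformations that preserve the $x$-projection. First I would translate the $y$-variables: since $0\in\calK=\proj_x\cD_\La$, there exists $y_0\in\mathbb R^h$ with $\La(0,y_0)\succeq 0$, and the substitution $y\mapsto y+y_0$ gives a pencil $\La_1(x,y):=\La(x,y+y_0)$ with the same $x$- and $y$-coefficients but with constant term $D_1:=\La(0,y_0)\succeq 0$. Since $y\mapsto y+y_0$ is an affine bijection of $\mathbb S_n^h$ for every $n$, one has $\proj_x\cD_{\La_1}=\calK$.

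Next I would promote $D_1$ to strictly positive definite. After a unitary conjugation write $D_1=\operatorname{diag}(\Sigma,0_k)$ with $\Sigma\succ 0$ and $k=\dim\ker D_1$, and block-partition $\La_1$ accordingly. The idea is to introduce a fresh symmetric $k\times k$ matrix variable $Z$, form an extended pencil $\La_2(x,y,Z)$ whose $(2,2)$-block is augmented by $Z+I_k$, and verify that $\proj_x\cD_{\La_2}=\calK$ by a Schur complement calculation. The constant term of $\La_2$ is then $\operatorname{diag}(\Sigma,I_k)\succ 0$, and rescaling gives the monic pencil
\[
L(x,y,Z):=\operatorname{diag}(\Sigma,I_k)^{-1/2}\La_2(x,y,Z)\operatorname{diag}(\Sigma,I_k)^{-1/2}
\]
with $\proj_x\cD_L=\calK$.

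For the boundedness statement, translation preserves boundedness of the feasibility set up to a finite shift in $y$, and the rescaling does not alter it. To bound the auxiliary variable $Z$ I would append an additional monic LMI block of the form $\begin{pmatrix}RI_k & Z\\ Z & RI_k\end{pmatrix}\succeq 0$ (equivalent to $\|Z\|\le R$) for $R$ sufficiently large so as not to alter $\proj_x\cD_L$; the resulting direct sum remains monic after an overall rescaling, and has bounded feasibility set.

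The main obstacle is the verification step in the second paragraph, namely $\proj_x\cD_{\La_2}=\calK$. The inclusion $\calK\subseteq\proj_x\cD_{\La_2}$ is immediate by taking $Z$ to cancel the shift, but the reverse inclusion is delicate: the $(2,2)$-block of $\La_1$ may carry $x$-constraints that could be washed out by the addition of $Z$, leading to a strictly larger projection. The hypothesis $0\in\operatorname{int}\calK$ is used essentially here to guarantee that for each new point in $\proj_x\cD_{\La_2}$ one can back-construct suitable $y$-values making $\La_1$ itself feasible; the interior assumption provides the needed flexibility, which could fail when $0$ lies on the boundary.
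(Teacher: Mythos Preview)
Your approach has a genuine gap at exactly the point you flag as the ``main obstacle'': the reverse inclusion $\proj_x\cD_{\La_2}\subseteq\calK$ can fail, and the interior hypothesis on $\calK$ does not rescue it. Take $g=h=1$ and
\[
\La(x,y)=\begin{pmatrix}2-y&0\\0&y-x\end{pmatrix},
\]
so that $\calK=\proj_x\cD_\La=\{X:X\preceq 2I\}$, which certainly contains $0$ in its interior. Your first step permits the choice $y_0=0$ (since $\La(0,0)=\operatorname{diag}(2,0)\succeq 0$), giving $D_1=\operatorname{diag}(2,0)$, $\Sigma=2$, $k=1$. Your augmented pencil is
\[
\La_2(x,y,z)=\begin{pmatrix}2-y&0\\0&\,y-x+z+1\end{pmatrix}.
\]
For \emph{any} self-adjoint $X$ one may take $Y=0$ and $Z=X-I$, obtaining $\La_2(X,0,X-I)=\operatorname{diag}(2I,0)\succeq 0$; hence $\proj_x\cD_{\La_2}$ is all of $\smat^1$, strictly larger than $\calK$. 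In particular, for $X=3I$ there is no $Y'$ with $X\preceq Y'\preceq 2I$, so the ``back-construction'' you allude to is impossible. The added slack $Z$ has washed out the $x$-constraint carried by the $(2,2)$-block.

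The paper sidesteps this by securing a \emph{better} translation point. One first passes to a lift with the minimal number $h$ of $y$-variables; under that minimality, if $\cD_\La(1)$ had empty interior in $\mathbb R^{g+h}$ it would lie in a proper affine hyperplane, and the corresponding affine relation (which must involve some $y_k$, else it would kill the interior of $\calK(1)$) lets one eliminate a $y$-variable, contradicting minimality. Thus $\cD_\La(1)$ has nonempty interior, and since $\proj_x$ is open and $0\in\operatorname{int}\calK(1)$, one finds $(0,\hat y)$ lying in the \emph{interior} of $\cD_\La(1)$, not merely on its boundary. After translating $y\mapsto y+\hat y$ the origin is interior to the spectrahedron itself, and the standard Helton--Vinnikov reduction (compress to the range of the constant term, then conjugate by its inverse square root) yields a monic pencil defining the \emph{same} spectrahedron---no auxiliary $Z$ is needed, and boundedness is preserved automatically.
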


\begin{proof}
Suppose 
\begin{enumerate}[\rm (i)]
 \item $\La(x,y)$ is an affine linear pencil,
 \[
  \La(x,y) = \Lambda_0 + \sum_{j=1}^g \Lambda_j x_j + \sum_{k=1}^h \Omega_k y_k; 
 \]
 \item   $\calK$ is the projection of $\cD_{\La}$ onto  $x$-space. Thus,
$\calK=\{X\in\smatg: \exists Y\in\smath: \, \Lambda(X,Y)\succeq 0\}.$
\end{enumerate}
   Without loss of generality, it may be assumed the number $h$ of $y$-variables
  is the smallest possible with respect to the the properties (i) and (ii). 

Let $\ccD_\La(1)$ denote the interior of $\cD_\La(1)$ and suppose first that this interior is empty.
In this case  the convex subset
$\cD_\La(1)$ of $\mathbb R^{g+h}$  lies in a proper affine subspace of $\R^{g+h}$. That is, 
there is an affine linear functional (with real coefficients) 
\[
 \ell(x,y) =\ell_0 + \sum_{j=1}^g \ell_j x_j + \sum_{k=1}^h \omega_k y_k 
\]
 such that
$\ell=0$ on $\cD_\La(1)$. Equivalently, $\ell=0$ on $\cD_\La$, cf.~Corollary 
\ref{cor:LMI1}. At least one $\omega_k$ is nonzero
as otherwise
$\ell$ would produce a nontrivial affine linear map vanishing on $\calK$, contradicting
  the assumption that $\calK$ has nonempty  interior. Without loss of generality, $\omega_h=1.$ 
  Consider the pencil $\tilde{\Lambda}$ in the variables $(x,\tilde{y}) = (x_1,\dots,x_g,y_1,\dots,y_{h-1}),$ 
\[
  \tilde{\Lambda}(x,\tilde{y}) = \ell_0 (\Lambda_0-\Omega_h) +  \sum_{j=1}^g (\Lambda_j-\ell_j\Omega_h) x_j + \sum_{k=1}^{h-1} (\Omega_k - \omega_k \Omega_h)y_k
          = \Lambda(x,y) - \Omega_h \ell(x,y).
\]
 Given $X\in\calK(n)$, there is a $Y\in \smatnh$ such that $\Lambda(X,Y)\succeq 0$. Letting $\tilde{Y}=(Y_1,\dots,Y_{h-1})$,
\[
 \tilde{\Lambda}(X,\tilde{Y}) = \Lambda(X,Y)-\Omega_h\otimes \ell(X,Y) = \Lambda(X,Y)\succeq 0.
\]
 On the other hand, if there is a $\tilde{Y}=(Y_1,\dots,Y_{h-1})$ such that $\tilde{\Lambda}(X,\tilde{Y}) \succeq 0$, then
 with 
\[
  Y_h = -(\ell_0 I + \sum_{j=1}^g \ell_j X_j +  \sum_{k=1}^{h-1} \omega_k Y_k),
\]
 and $Y=(\tilde{Y},Y_h)$, it follows that $\ell(X,Y)=0$. Hence,
\[
  \Lambda (X,Y) = \tilde{\Lambda}(X,\tilde{Y}) + \Omega_h \otimes \ell(X,Y) = \tilde{\Lambda}(X,\tilde{Y})\succeq 0.
\]
 It follows that $\tilde{\Lambda}$ satisfies conditions (i) and (ii), contradicting
 the minimality assumption on the number of $y$-variables.  Hence $\cD_\La(1)$ has a nontrivial interior. 

The projection $\proj_x:\cD_\La(1)\to \calK(1)$ is continuous, so the preimage
of a small ball $B_\eps\subseteq\R^g$ around $0\in \inter \calK(1)$ is an open subset of $\cD_\La(1)$.
At least one of these points will have its $x$-component equal to $0$,
say $(0,\hat{y})\in\ccD_\La(1)$. By replacing $\La(x,y)$ with $L(x,y)=\La(x,y-\hat{y})$ 
we
obtain a linear pencil $L$ such that $\proj_x\cD_L=\proj_x\cD_\La$ 
but now the free spectrahedron $\cD_L$  has  $(0,0)$ as an  interior point.
Hence a standard reduction shows we may take $L$ to be monic
(cf.~\cite{HV07}).
It is clear that $\cD_L$ is bounded if $\cD_\La$ is bounded.
\end{proof}

\subsection{Convex Hulls and Spectrahedrops}
  Given a free semialgebraic set  $\cD_p$, a goal is to determine when its convex hull,
  or closed convex hull, or its operator convex hull is a free spectrahedrop.  
When this can be done it provides a potentially useful approximation to $\cD_p$.

\begin{example}
 \label{ex:btvlift}
 Recall the polynomial $p=1-x_1^2 -x_2^4$ from Example \ref{ex:btv}. 
  That the set $\cD_p(1)$ has an LMI lift is well known and is given 
 as follows. Let 
\[
  \Lambda(x_1,x_2,y) = \begin{pmatrix} 1  & 0 & x_1 \\ 0 & 1 & y \\ x_1 & y & 1 \end{pmatrix}
    \oplus \begin{pmatrix} 1 & x_2 \\ x_2 & y\end{pmatrix}.
\]
 It is readily checked that $\proj_x \cD_{\Lambda}(1) = \cD_p(1)$.  Further,
  Lemma \ref{lem:boundedmonicLift} implies that $\Lambda$ can 
  be replaced by a monic linear pencil $L$, cf.~Subsection \ref{subsec:71}.
\end{example}

\begin{prop}\label{prop:liftMe}
 Assume $\cD_p(1)$ is bounded and $L$ is a monic linear pencil. 
 If  $\conv (\cD_p(1))$, the ordinary
 convex hull of $\cD_p(1)\subset \mathbb R^g$,  admits an LMI lift 
 to $\cD_L(1)$  and 
  $\cD_p\subseteq\proj_x \cD_L,$ then $\conv (\cD_p(1))=\mco \cD_p(1)$.
\end{prop}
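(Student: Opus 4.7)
The plan is to verify the two inclusions $\conv(\cD_p(1)) \subseteq \mco\cD_p(1)$ and $\mco\cD_p(1) \subseteq \conv(\cD_p(1))$, both of which turn out to be short consequences of results already established in the excerpt.

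First I would dispose of the easy inclusion. Since $\mco\cD_p$ is matrix convex, its level-$1$ slice $\mco\cD_p(1)\subseteq\R^g$ is an ordinary convex set (by Lemma~\ref{lem:capped}, or directly from the matrix convex combination in \eqref{eq:mconv}) that contains $\cD_p(1)$, so it contains $\conv(\cD_p(1))$.

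For the reverse inclusion I would exploit the two hypotheses about $L$. By Proposition~\ref{prop:shadowclosedrestrict}, $\proj_x\cD_L$ is a matrix convex free set. The assumption $\cD_p \subseteq \proj_x\cD_L$ (as free sets, at every level) together with matrix convexity forces $\mco\cD_p \subseteq \proj_x\cD_L$. Evaluating at level $1$, and noting that the level-$1$ projection of the free set $\cD_L$ is just the ordinary coordinate projection of $\cD_L(1)\subseteq\R^{g+h}$, we obtain
\[
 \mco\cD_p(1) \;\subseteq\; (\proj_x\cD_L)(1) \;=\; \proj_x\bigl(\cD_L(1)\bigr) \;=\; \conv(\cD_p(1)),
\]
where the final equality is the LMI-lift hypothesis on $\conv(\cD_p(1))$.

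Combining the two inclusions yields $\conv(\cD_p(1)) = \mco\cD_p(1)$. There is no real obstacle here; the statement is essentially a bookkeeping check that the matrix convex hull cannot be any larger than a matrix convex set $\proj_x\cD_L$ that already traps $\cD_p$, and that this trap restricts at the scalar level exactly to the classical convex hull by hypothesis. The only point worth being careful about is the equality $(\proj_x\cD_L)(1) = \proj_x(\cD_L(1))$, which is immediate from the definition of the free projection.
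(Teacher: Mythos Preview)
Your proof is correct and is in fact more direct than the paper's. The paper argues by separation: it takes an arbitrary affine linear functional $\ell$ nonnegative on $\cD_p(1)$, lifts it to $\cD_L(1)$ via the LMI-lift hypothesis, invokes Corollary~\ref{cor:LMI1} to promote $\ell|_{\cD_L(1)}\ge 0$ to $\ell|_{\cD_L}\succeq 0$, and then restricts to $\mco\cD_p(1)$ using $\mco\cD_p\subseteq\proj_x\cD_L$. This yields $\mco\cD_p(1)\subseteq\overline{\conv(\cD_p(1))}$, and the boundedness of $\cD_p(1)$ is then used to drop the closure. Your argument bypasses both Corollary~\ref{cor:LMI1} and the Hahn--Banach step by working directly with the containment $\mco\cD_p\subseteq\proj_x\cD_L$ at level~$1$; as a bonus, your route never uses the boundedness hypothesis on $\cD_p(1)$. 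The paper's approach, on the other hand, makes explicit the role of scalar-to-matrix transfer for linear pencils (Corollary~\ref{cor:LMI1}), which is thematically closer to the surrounding discussion of pencils and hulls.
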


\begin{proof}
Suppose $\ell$ is an affine linear function nonnegative on $\cD_p(1)$.
Then
 $\ell|_{\cD_L(1)}\geq0$ and hence by Corollary \ref{cor:LMI1},
 $\ell|_{\cD_L}\succeq0$. Since $\mco \cD_p 
\subseteq\proj_x \cD_L$,
 this implies $\ell|_{\mco \cD_p(1)}\geq0$. As $\cD_p(1)\subseteq\mco \cD_p(1)$,
 this shows $\overline {\conv \cD_p(1)}=\mco \cD_p(1)$.
 As $\cD_p(1)$ is compact, its convex hull is closed, so we are done.
\end{proof}

\begin{remark}\rm
 \label{rem:liftMe}
 Later in Section \ref{sec:freelassy}  we shall give a procedure 
 for constructing a family of $L$
 with  the property 
 \beq\label{eq:subtle}
 \mco \cD_p\subseteq\proj_x \cD_L.
 \eeq
While for many $p$ 
the ordinary
 convex hull of $\cD_p(1)$,  admits an LMI lift 
 to $\cD_L(1)$, the property \eqref{eq:subtle} is  not always satisfied. Indeed,
the conclusion of Proposition \ref{prop:liftMe}
can fail. 
\end{remark}

\begin{example}\rm
 Returning to the polynomial $p(x_1,x_2)=1-x_1^2 - x_2^4$ of Example \ref{ex:btvlift},  Proposition \ref{prop:liftMe} 
  implies that $\cD_p(1) = \mco \cD_p(1)$.
 Since, as noted in Example \ref{ex:btvv},  $\cD_p(2)$ is not convex, $\cD_p$  is not a free spectrahedrop.
  We do not know if the  closed matrix 
  convex hull of $\cD_p$ is a free spectrahedrop, but Theorem \ref{thm:infinitelasserre} below
   says it almost is. 
\end{example}

\section{Construction of the Free Lift}
 \label{sec:freelassy}

Classically, given a  commutative semialgebraic set $\cD_p(1) \subseteq \R^g$,
 a  construction proposed by Lasserre \cite{Las} (see also Parrilo \cite{Par06})
produces a sequence of spectrahedra $(D^{(n)})$ and projections  $(\pi_n)$
 such that $\pi_n(D^{(n)} )\supseteq \cD_p(1)$ is a nested decreasing sequence of semialgebraic sets
approximating the convex hull of $\cD_p(1)$.  
 Under mild hypotheses, this {\em sequence of  relaxations}
  actually terminates and  presents
  $\conv \big(\cD_p(1)\big)$ as a projection of a spectrahedron; i.e., there is an $m$ 
 such that $\conv \big(\cD_p(1)\big)=\pi_m(\cD^{(m)})$  \cite{HN09,HN10}.
 For a substantial recent advance, see Scheiderer's
 complete solution in two dimensions \cite{Sch11, SchArx12}.
 We refer to \cite{dKL11,Hen11,NPS10} for further results in this direction.
 
There are two parts to our
 free analog of the Lasserre--Parrilo construction. The first, described in this section,
 constructs for a given $\cD_p$, via free analogs of moment sequences and Hankel matrices, 
an infinite  free  spectrahedron $\cL_p,$ and a canonical projection  of $\cL_p$ onto 
   the operator convex hull of $\cD_p$. 

   The second part of the construction, appearing in Section \ref{sec:preEx}, 
   consists of a systematic procedure for passing from 
  $\cL_p$ to a sequence of finite free spectrahedra and corresponding projections
  onto increasingly finer outer approximations to the operator convex hull of $\cD_p$.

\subsection{Free Hankel matrices} 
The key ingredient of the systematic method for constructing lifts presented here
are the block free (multivariable) analogs of Hankel matrices. A Hankel matrix $H$
is one that is constant on antidiagonals so that the entry $H_{i,j}$ depends only on the sum $i+j$.
 In particular, a sequence $(h_k)_k$ of self-adjoint $m\times m$ matrices 
 determines a block Hankel matrix $H= (h_{i+j})_{i,j}$.
 The sequence $(h_k)$ is often referred to as a moment sequence.
  In the case that $H$ is positive semidefinite, the normalization $h_0=I$
  is typically harmless. 

 Free Hankel matrices have a description in terms of free moment sequences. 
 Given a positive integer  $n,$  a sequence  $W:= (W_\alpha)_\alpha$ of 
 $n\times n$ matrices $W_\alpha$ indexed by words $\alpha$
 in the free symmetric variables $x=(x_1,\dots,x_g)$ 
 is a \df{moment sequence} if it 
 is symmetric in the sense that $W_{\alpha^*} = W^*_\alpha$
  and is normalized by $W_\emptyset =I$. 
  Note that the symmetry of $W$ implies that each $W_{x_j}$ is 
  a self-adjoint matrix.
  The moment sequence $(W_\alpha)$ determines 
  the \df{free Hankel} matrix 
\[
  H(W)= \big(   W_{\alpha^*  \beta}  \big)_{\alpha,\beta}.
\]
 For a  positive integer $d$, 
$$
  H_d(W)= \big( W_{\alpha^*  \beta} \big)_{|\alpha|, |\beta|\le d}.
$$
  is a \df{truncated free Hankel matrix} associated to $W$.

 Let $p$ be a  $\ll\times\ll$-matrix valued polynomial of degree 
 at most $\delta$. Thus, 
\[
  p(x) = \sum_{ | \gamma| \leq \de} p_\gamma \gamma
\]
 for some $\ll\times\ll$ matrices $p_{\gamma}$. 
 The  \df{$p-$localizing matrix} $\hs_p (W)$ associated to $H(W)$ 
 is the $n\ll\times n\ll$ (block) matrix with $(\alpha,\beta)$ entry 
$$
 \hs_p (W)_{\alpha,\beta}: = \sum_{|\gamma| \le \de}
 p_\gamma  \otimes W_{\alpha^* \gamma \beta}.
 $$
  Of course, if $p=1$, then 
$$
 \hs_1 (W) = H (W).
$$
For $d\in\N$, the \df{$d$-truncated localizing matrix} of $p$ is 
\[
\hs_{p,d}(W):=
\big(\hs_p (W)_{\alpha,\beta}\big)_{|\alpha|,|\beta|\leq d}.
\]
 Note that if the word $\gamma$ has length $2m-1$ or $2m$, then it can be written
 as a product $\gamma = \eta^*\sigma$ of words of length at most $m$. 
 Hence, the truncated localizing matrix actually only depends upon 
 the entries $W_{\al^*\be}$ for
$|\al|,|\be|\leq d+\left\lceil \frac12\deg(p)\right\rceil$.
Here $\lceil\textvisiblespace\rceil$ denotes the 
``smallest integer not less than'' function.
 The reader is encouraged to skip ahead temporarily 
 to Subsection \ref{subsec:hankelFun} to get a feel for
 the structure of these matrices. 

\bigskip

  An element $Z\in \cD_p(m)$ (so acting on $\FF^m$) along with
   an isometry $V:\FF^n \to \FF^m$ determines a moment sequence,
\begin{equation}
 \label{eq:moment}
  Y_\alpha = V^* Z^\alpha V. 
\end{equation}
   For instance, if $\alpha =x_1x_2x_1$, then
\[
 Y_\alpha = V^* Z_1 Z_2 Z_1 V.
\]
  Note that the fact that $Z^\emptyset=I$ and the assumption that $V$ is an isometry
  implies $Y_\emptyset =I$.  Further,
 an easy calculation shows that this moment sequence satisfies
 \begin{equation}\label{eq:momentPos}
H(Y) \succeq 0 \quad\text{ and }\quad
\hs_p(Y) \succeq 0.  
\end{equation}
  Likewise, an element $Z\in\cO_p(K)$ along with $n\in\N$ and an isometry 
 $V:\FF^n\to K$ determines a moment sequence $(Y_\al)_\al$ via \eqref{eq:moment} for which
 \eqref{eq:momentPos} holds.

\subsection{Riesz Maps}
 \label{sec:Riesz}
\def\Rss{\R^{s\times s}}
Let $s\in\N$.
To a moment sequence $W=(W_\al)_\al$ of $n\times n$ matrices there is the associated 
linear \df{Riesz mapping}
\[
\Phi_W^s: \FF^{s\times s}\ax\to   \FF^{sn \times sn}, 
\quad
\sum_{\al\in\ax} B_\al \al \mapsto\sum_{\al\in\ax}  B_{\al}\otimes W_{\al}.
\]
This linear map is symmetric in the sense that 
\[
\Phi_W^s(P^*)=\Phi_W^s(P)^*
\]
for $P\in\FF^{s\times s}\ax$.

Similarly, to a truncated Hankel matrix
$H_d(W)$, or the corresponding truncated moment sequence $W=(W_\al)_{|\al|\leq 2d}$,
we can associate a \df{Riesz map}
\[
\Phi_W^s: \FF^{s\times s}\ax_{2d}\to      \FF^{sn \times sn},        \quad
\sum_{\al\in\ax_{2d}} B_\al \al \mapsto \sum_{\al\in\ax_{2d}} B_{\al}\otimes W_{\al}.
\]

\begin{prop}\label{prop:riesz}
Suppose $W$ is a moment sequence and let $p\in\FF^{\ell\times\ell}\ax$ be a symmetric free matrix polynomial.
 For positive integers $s$ and $t$, 
\ben[\rm (1)]
\item
if $H(W)\succeq0,$ then $\Phi_W^s(P^*P)\succeq0$ for all $P\in\FF^{t\times s}\ax$;
\item
if $H_d(W)\succeq0,$ then $\Phi_W^s(P^*P)\succeq0$ for all $P\in\FF^{t\times s}\ax_d$;
\item
if $\hs_p(W)\succeq0,$ then $\Phi_W^s(f^* (I_t\otimes p) f)\succeq0$ for all $f\in\FF^{t\ell\times s}\ax$;
\item
if $\hs_{p,d}(W)\succeq0,$ then $\Phi_W^s(f^* (I_t\otimes p) f)\succeq0$ for all $f\in\FF^{t\ell\times s}\ax_d$.
\een
\end{prop}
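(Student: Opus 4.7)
The plan is to verify each of the four items by a direct quadratic form computation, using that tensoring a positive semidefinite operator with an identity preserves positive semidefiniteness. The statements for the truncated matrices (2) and (4) follow from the same argument as their untruncated counterparts (1) and (3) simply by restricting all sums to words of length at most $d$, so I will concentrate on (1) and (3).

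For (1), expand $P=\sum_\alpha P_\alpha\alpha$ with $P_\alpha\in\FF^{t\times s}$. Then $P^*P=\sum_{\alpha,\beta} P_\alpha^*P_\beta\,\alpha^*\beta$ and hence
\[
\Phi_W^s(P^*P)=\sum_{\alpha,\beta} P_\alpha^*P_\beta\otimes W_{\alpha^*\beta}.
\]
Fix an arbitrary test vector $v\in\FF^{sn}$. Define a vector $w=(w_\alpha)_\alpha$ indexed by words, with components $w_\alpha=(P_\alpha\otimes I_n)v\in\FF^{tn}$. Using the mixed-product rule $(A\otimes B)(C\otimes D)=AC\otimes BD$, a direct computation gives
\[
\langle \Phi_W^s(P^*P)v,v\rangle
=\sum_{\alpha,\beta}\langle (P_\alpha^*P_\beta\otimes W_{\alpha^*\beta})v,v\rangle
=\sum_{\alpha,\beta}\langle (I_t\otimes W_{\alpha^*\beta})w_\beta,w_\alpha\rangle
=\langle (I_t\otimes H(W))w,w\rangle.
\]
Since $H(W)\succeq 0$ implies $I_t\otimes H(W)\succeq 0$, the right-hand side is nonnegative for every $v$, hence $\Phi_W^s(P^*P)\succeq 0$. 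Item (2) is obtained by the same identity after restricting to $|\alpha|,|\beta|\le d$ and replacing $H(W)$ by $H_d(W)$.

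For (3), expand $p(x)=\sum_{|\gamma|\le\delta}p_\gamma\gamma$ and $f=\sum_\alpha f_\alpha\alpha$ with $f_\alpha\in\FF^{t\ell\times s}$, so that
\[
f^*(I_t\otimes p)f=\sum_{\alpha,\beta,\gamma} f_\alpha^*(I_t\otimes p_\gamma)f_\beta\,\alpha^*\gamma\beta,
\qquad
\Phi_W^s\bigl(f^*(I_t\otimes p)f\bigr)=\sum_{\alpha,\beta,\gamma} f_\alpha^*(I_t\otimes p_\gamma)f_\beta\otimes W_{\alpha^*\gamma\beta}.
\]
Now for $v\in\FF^{sn}$ set $w_\alpha=(f_\alpha\otimes I_n)v\in\FF^{t\ell n}$. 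Another application of the mixed-product rule, this time grouping the sum over $\gamma$ into the block $\sum_\gamma p_\gamma\otimes W_{\alpha^*\gamma\beta}=\hs_p(W)_{\alpha,\beta}$, yields
\[
\langle \Phi_W^s\bigl(f^*(I_t\otimes p)f\bigr)v,v\rangle
=\sum_{\alpha,\beta}\Bigl\langle \bigl(I_t\otimes\hs_p(W)_{\alpha,\beta}\bigr)w_\beta,\,w_\alpha\Bigr\rangle
=\langle (I_t\otimes\hs_p(W))w,w\rangle\ge 0,
\]
where the last inequality uses the hypothesis $\hs_p(W)\succeq 0$. This proves (3), and (4) follows by the same identity with all sums restricted to $|\alpha|,|\beta|\le d$.

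The only real work is the Kronecker bookkeeping in the identity $\langle\Phi_W^s(\,\cdot\,)v,v\rangle=\langle(I_t\otimes\cdot)w,w\rangle$; once the correspondence $v\leftrightarrow w=(f_\alpha\otimes I_n)v$ is written down, positivity is immediate from $H(W)\succeq 0$ (resp.\ $\hs_p(W)\succeq 0$). I do not expect any significant obstacle beyond taking care that the block sizes $n$, $s$, $t$, and $\ell$ are composed in the correct order in the tensor factors.
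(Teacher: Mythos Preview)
Your proof is correct and is essentially the same argument as the paper's: the paper records the identity as a matrix factorization $\Phi_W^s(P^*P)=(\vec P\otimes I_n)^*(I_t\otimes H(W))(\vec P\otimes I_n)$ (and similarly with $\hs_p(W)$ for item (3)), while you verify the equivalent quadratic-form version against an arbitrary test vector $v$, with your $w_\alpha=(P_\alpha\otimes I_n)v$ being exactly the $\alpha$-block of $(\vec P\otimes I_n)v$. The handling of the truncated cases (2) and (4) is also identical.
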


\begin{proof}
(1) Write $P=\sum_{\al\in\ax} P_\al \al$. Then
\[
\Phi_W^s(P^*P) = \sum_{\al,\be} P_\al^* P_{\be} \otimes W_{\al^*\be}.
\]
Let $\vec P = \begin{pmatrix} P_v \end{pmatrix}_{v\in\ax}$ be a column block-vector of coefficients of $P$. Then
\beq\label{eq:riesz1}
\Phi_W^s(P^*P) = \big(\vec P \otimes I_n\big)^* \big(I_t\otimes H(W) \big) \big(\vec P \otimes I_n\big)
\succeq0
\eeq
since $I_t\otimes H(W)\succeq0$ by assumption.
For the proof of (2) simply replace $H(W)$ by $H_d(W)$ in \eqref{eq:riesz1}.

The proofs of (3) and (4) are similar to those of (1) and (2) respectively. 
For (3), using the vector notation as in items (1) and (2), 
\[
\begin{split}
 \Phi_W^s\big(f^*(I_t\otimes p) f\big) & =  \sum_{\sigma}  \Big(\sum_{\alpha,\beta}  \sum_{\gamma: \alpha^*\gamma\beta=\sigma} f_\alpha^* p_\gamma f_{\beta}\Big) \otimes W_{\sigma} \\
&  =  \sum_{\alpha,\beta} \big(f_\alpha^*\otimes I_n\big)\, \Big(\sum_{\gamma} I_t\otimes p_{\gamma}\otimes W_{\alpha^*\gamma\beta} \Big)\,  \big(f_\beta \otimes I_n\big) \\
&  =  \big(\vec f \otimes I_n\big)^*  \big(I_t\otimes \hs_p(W)\big) \big(\vec f \otimes I_n\big).
\end{split}
\]
 For (4) we use the corresponding truncated version
\[
\Phi_W^s(f^*p f) = \big(\vec f \otimes I_n\big)^*  \big(I_t\otimes \hs_{p,d}(W)\big) \big(\vec f \otimes I_n\big),
\]
where $\vec f$ is a block column vector consisting of coefficients of $f$.
\end{proof}

\subsection{Lasserre--Parrilo Lift: Moment Relaxations}
 \label{sec:LasLiftDef}
 Given a positive integer $n$, let
\[
 \Lift_p(n) := \{Y=(Y_\alpha)_\alpha : Y_\alpha \in \FFnn, \ \ Y_\emptyset =I, 
 \ \ Y_{\al^*}=Y_\al^*, 
 \ \ H(Y) \succeq 0, \ \ \hs_p(Y) \succeq 0\}
\]
 and let $\Lift_p$ denote the sequence $(\Lift_p(n))_n$.  Implicitly, the $Y$ in $\Lift_p$
  are understood to be moment sequences. 
 Moreover, let
 \[
 \Liftfin_p:=\{ Y\in \Lift_p : \rank H(Y)<\infty\}.
 \]
 In particular, the $Y$ appearing in  \eqref{eq:moment} is in $\Lift_p$ if $Z\in\cO_p,$ and is in $\Liftfin_p$ if $Z\in\cD_p$,. 
 Given $Y\in\Lift_p(n)$, let 
\[
  \hY = (Y_{x_1}, Y_{x_2},\dots, Y_{x_g}) \in \smatng.
\]

\begin{thm}
 \label{thm:true}
   If $X\in\mco \cD_p$,
   then there is a $Y\in\Liftfin_p$ such that
\[
  X=\hY.
\]
  Conversely, if  $Y\in\Liftfin_p$, then $\hY\in\mco \cD_p$. 
  \end{thm}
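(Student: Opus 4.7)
\medskip\noindent\textbf{Proof proposal.} The statement splits into two implications and I treat them separately.

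For the forward direction, suppose $X\in\mco\cD_p(n)$. By Proposition \ref{prop:semialghull}, there exist $m\in\N$, $Z\in\cD_p(m)$, and an isometry $V\colon\FF^n\to\FF^m$ with $X=V^*ZV$. I define the candidate moment sequence $Y=(Y_\alpha)_\alpha$ by $Y_\alpha:=V^*Z^\alpha V$ as in \eqref{eq:moment}, so automatically $Y_\emptyset=I$, $Y_{\alpha^*}=Y_\alpha^*$ and $\hat Y=X$. The identities
\[
H(Y) = T^*T, \qquad \hs_p(Y) = S^*\,p(Z)\,S,
\]
where $T$ has $\alpha$-th block row $Z^\alpha V$ and $S$ has $\alpha$-th block row $I_\ell\otimes Z^\alpha V$, make the positive semidefiniteness conditions evident once one unpacks the definitions (the second identity uses $\sum_\gamma p_\gamma\otimes Z^{\alpha^*\gamma\beta}=(I_\ell\otimes Z^\alpha V)^*\,p(Z)\,(I_\ell\otimes Z^\beta V)$). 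Finally, the range of $T$ is contained in $\FF^m$, which is finite dimensional, so $\rank H(Y)\le m<\infty$ and hence $Y\in\Liftfin_p$.

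For the converse, I run a GNS-style construction using the assumption $\rank H(Y)<\infty$. Equip the vector space $\cV:=\FF\ax\otimes\FF^n$ with the sesquilinear form
\[
\bigl\langle \sum_\alpha \alpha\otimes u_\alpha,\ \sum_\beta \beta\otimes v_\beta\bigr\rangle:=\sum_{\alpha,\beta} v_\beta^* Y_{\beta^*\alpha} u_\alpha,
\]
which is positive semidefinite exactly because $H(Y)\succeq0$. Quotient by its kernel and, using $\rank H(Y)<\infty$, complete to a finite-dimensional Hilbert space $\cH$. Define operators $Z_j$ on $\cH$ by left multiplication, $Z_j[\alpha\otimes u]:=[x_j\alpha\otimes u]$; symmetry of the form (an immediate consequence of $Y_{\alpha^*}=Y_\alpha^*$ together with $x_j^*=x_j$) shows both that $Z_j$ is well defined on the quotient and that it is self-adjoint. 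Let $V\colon\FF^n\to\cH$ be $Vu:=[\emptyset\otimes u]$; since $Y_\emptyset=I$, $V$ is an isometry, and a direct computation gives $V^*Z^\alpha V=Y_\alpha$. In particular $\hat Y=V^*ZV$.

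It remains to verify $Z\in\cD_p$, and here is the technical heart. A straightforward bookkeeping shows that for a vector $w=\sum_\alpha\sum_k e_k\otimes[\alpha\otimes u_{\alpha,k}]$ in $\FF^\ell\otimes\cH$, one has the identity
\[
\langle p(Z)w,w\rangle = \sum_{\alpha,\beta} u_\beta^*\,\hs_p(Y)_{\beta,\alpha}\,u_\alpha,
\]
where $u_\alpha=\sum_k e_k\otimes u_{\alpha,k}\in\FF^{\ell n}$; equivalently, this is Proposition \ref{prop:riesz}(3) applied with $s=1$ to the Riesz map attached to $Y$. Since vectors of this form are dense in $\FF^\ell\otimes\cH$ and $\hs_p(Y)\succeq0$, we conclude $p(Z)\succeq0$, so $Z\in\cD_p$, and therefore $\hat Y=V^*ZV\in\mco\cD_p$ by Proposition \ref{prop:semialghull}. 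The main obstacle in writing this up cleanly is the indexing in the last identification of $\langle p(Z)w,w\rangle$ with the localizing quadratic form; everything else is bookkeeping with the GNS construction, which is standard once the finite-rank hypothesis keeps us out of Hilbert-space technicalities.
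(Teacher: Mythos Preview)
Your proof is correct and follows essentially the same GNS-style approach as the paper: construct the moment sequence $Y_\alpha=V^*Z^\alpha V$ for the forward direction, and for the converse build the finite-dimensional Hilbert space from the form determined by $H(Y)$, define $Z_j$ as left multiplication by $x_j$, and verify $p(Z)\succeq0$ via the localizing matrix. The only cosmetic differences are that you write $H(Y)=T^*T$ and $\hs_p(Y)=S^*p(Z)S$ explicitly (the paper just calls this ``an easy calculation''), and you justify well-definedness of $Z_j$ via formal self-adjointness plus Cauchy--Schwarz rather than the paper's direct verification that the null space is a left submodule.
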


\begin{proof}
 If $X$ is in the matrix convex hull of $\cD_p$, then there is an isometry $Q$
 and $Z\in\cD_p$ such that  $X=Q^* ZQ$. In this case the moment sequence
  $Y_\alpha = Q^* Z^\alpha Q$ satisfies the conclusion of the first part of
  the theorem.

  To prove the converse, suppose $(Y_\alpha)$ is a moment sequence from
  $\Liftfin_p(n)$.  
 Define, on the vector space $\FF\langle x\rangle \otimes \FF^n$, the 
  sesquilinear form
\begin{equation}
 \label{eq:hankform}
 [ s,t ]_Y = \sum_{\alpha,\beta}  \langle Y_{\beta^* \alpha} s_\alpha, t_\beta \rangle
\end{equation}
 where $s=\sum \alpha \otimes s_\alpha$ and $t=\sum \beta\otimes t_\beta$. 
 The assumption that $H(Y)\succeq 0$ implies that the form $[s,t]_Y$ 
  is positive semidefinite. Let $\cE_Y$ denote the (pre-)Hilbert space 
  obtained by modding out the subspace 
\[
   \cN = \{s : [s,s]_Y = 0\}.
\]
 Note that $\rank H(Y)<\infty$ implies $\dim \cE_Y<\infty$ and hence $\cE_Y$ is a Hilbert space. 

 An important observation is the following: If $s\in \cN$ and $1\le j\le g$, then 
 $r=x_j s \in\cN,$
 i.e., $\cN$ is a left $\FF\ax$-submodule of $\FF\langle x\rangle \otimes \FF^n$.
    To prove this observation, note that, because
  $H(Y)$ is positive semidefinite, if $s\in\cN$ then
\[
  \sum_\alpha Y_{\beta^* \alpha} s_\alpha =0
\]
 for each $\beta$ (and conversely). In this case,
\[
  \sum_\gamma Y_{\beta^* \gamma} r_\gamma 
   =  \sum_{\alpha} Y_{\beta^* x_j\alpha} s_\alpha 
   =  \sum_{\gamma} Y_{(x_j\beta)^* \alpha} s_\alpha 
   =  0
\]
  and hence $r\in\cN$.  It now follows that the mapping $Z_j$
  sending $s$ to $x_j s$ is well defined on the
  finite-dimensional
  Hilbert space $\cE_Y$.  The computation above also shows that
  whether or not $s\in \mathcal N$, 
\[
  \langle x_j s,t\rangle = \langle s,x_jt\rangle
\]
 and hence $Z_j^*=Z_j$. 

Define $Q:\FF^n \to \cE_Y$ by
\[
 Qv = \emptyset \otimes v
\]
 and note that $Q$ is an isometry.  By construction, $Q^* Z^\alpha Q= Y_\alpha$.

 Finally, to see $p(Z)=\sum p_\gamma \otimes Z^\gamma$ is positive definite, let
 $s=\sum e_j\otimes \alpha \otimes s_{\alpha,j}$ be given, where $\{e_1,\dots,e_\ell\}$
  is the standard orthonormal basis for $\FF^\ell$ (the space that the $p_\gamma$ act on)
  and $s_{\alpha,j}\in\FF^n$. Then,
\[
 \begin{split}
   \langle p(Z)s,s \rangle & =  \sum_{\alpha,\beta,\gamma,j,k} \langle p_\gamma \otimes Z^\gamma e_j\otimes \alpha\otimes s_{\alpha,j}, e_k\otimes \beta \otimes s_{\beta,k}\rangle \\
&   =  \sum \langle p_\gamma e_j,e_k\rangle \, \langle Z^\gamma \alpha\otimes s_{\alpha,j}, \beta\otimes s_{\beta,k}\rangle \\
  & =  \sum \langle p_\gamma e_j,e_k\rangle \, \langle Y_{\beta^* \gamma \alpha} s_{\alpha,j},s_{\beta,k}\rangle \\
  & =  \sum_{\alpha,\beta}  \Big\langle \big(\sum_\gamma p_\gamma \otimes Y_{\beta^*\gamma\alpha}\big) \sum_j e_j \otimes s_{\alpha,j}, \sum_k e_k \otimes s_{\beta,k}\Big\rangle \\
   & =  \langle \hs_p(Y) \vec s,\vec s \, \rangle \ge 0,
\end{split}
\]
  where $\vec s$ is the vector $(s_\alpha)_\alpha$ for $s_\alpha = \sum_j e_j \otimes s_{\alpha,j}$.  Thus the assumption that $\hs_p(Y)$ is positive semidefinite
 implies $p(Z)$ is positive semidefinite. 
  We conclude that 
  $\hY = Q^* Z Q$ is in the matrix convex hull of $\cD_p$.
\end{proof}

\begin{definition}\rm
Given $p$, let 
\[
 \hLift_p := \{\hY: Y\in\Lift_p\} \quad\text{ and }\quad
 \hLiftfin_p := \{\hY: Y\in\Liftfin_p\}.
\]
\end{definition}
\noindent
Theorem  \ref{thm:true} says that the matrix
 convex hull $\mco \cD_p$ of $\cD_p$ equals  $\hLiftfin_p$.

  Next we turn to operator convex hulls. 
To obtain a good lifting theorem we make a boundedness assumption
which replaces the $\rank H(Y)$  finite condition used in Theorem \ref{thm:true}.\looseness=-1

  Given $K\in \R_{>0},$ the matrix polynomial $p$ is \df{$K$-archimedean} if
   there exist  matrix polynomials  $s_j$ and $f_j$ such that 
   \begin{equation}\label{eq:archimed}
  K^2 - \sum_j x_j^2  = \sum s_j^* s_j + \sum f_j^* p f_j,
\end{equation}
  and $p$ is \df{archimedean} if it is $K$-archimedean for some $K>0$.

\begin{thm}
 \label{thm:infinitelasserre}
   If $p$ is archimedean, then $\mcop = \hLift_p$. 
   Moreover, $\mcop(n)$ is closed and bounded and contains $\mco \cD_p(n)$
   for each $n$. 
\end{thm}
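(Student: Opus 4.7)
My plan is to mirror the proof of Theorem \ref{thm:true}, upgrading the finite rank (GNS) construction to the bounded operator setting using the archimedean hypothesis, and then separately establishing the topological conclusions.

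For the inclusion $\mcop \subseteq \hLift_p$, suppose $X\in \mcop(n)$, so $X = V^*ZV$ for some $Z\in\cO_p(\cK)$ and isometry $V:\FF^n\to\cK$. I would define the moment sequence $Y_\alpha := V^*Z^\alpha V$; then $Y_\emptyset=I$ and $Y_{\alpha^*}=Y_\alpha^*$ automatically. Writing any test vector $\vec s$ against $H(Y)$ or $\hs_p(Y)$ identifies the resulting quadratic form with $\langle P(Z)^*P(Z)\xi,\xi\rangle$ respectively $\langle f(Z)^*(I\otimes p(Z))f(Z)\xi,\xi\rangle$ for appropriate $P$, $f$, $\xi$ (a vector-matrix variant of Proposition \ref{prop:riesz}), hence both are $\succeq 0$. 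So $Y\in\Lift_p(n)$ with $\hY=X$.

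For the reverse inclusion $\hLift_p\subseteq \mcop$, fix $Y\in\Lift_p(n)$ and repeat the construction in the proof of Theorem \ref{thm:true}: form the pre-Hilbert space $\cE_Y$ from the form $[s,t]_Y$ on $\FF\langle x\rangle\otimes \FF^n$, and define $Z_j$ to be left multiplication by $x_j$. The new issue, which is the heart of the matter, is boundedness of $Z_j$ (since we no longer assume $\rank H(Y)<\infty$). This is where archimedeanness enters. Applying the Riesz map $\Phi_Y$ (Proposition \ref{prop:riesz}) to the identity $K^2-\sum x_j^2 = \sum s_j^* s_j + \sum f_j^*\,p\, f_j$ and pairing with a test vector $\vec s$ gives
\[
K^2\,[s,s]_Y - \sum_j [Z_j s, Z_j s]_Y = \sum_j [s_j(Z) s, s_j(Z) s]_Y + \sum_j \langle \hs_p(Y)\,\text{stuff},\,\text{stuff}\rangle \;\geq\; 0,
\]
so each $Z_j$ extends from $\cE_Y$ to a bounded self-adjoint operator on the Hilbert completion $\overline{\cE_Y}$, with $\|Z_j\|\le K$. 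The computation from Theorem \ref{thm:true} showing $\langle p(Z)\vec s,\vec s\rangle = \langle \hs_p(Y)\vec s,\vec s\rangle\ge 0$ carries over verbatim (now on a dense subspace, hence by continuity on all of $\overline{\cE_Y}$), so $Z\in \cO_p(\overline{\cE_Y})$. With $Qv=\emptyset\otimes v$ an isometry $\FF^n\to\overline{\cE_Y}$ one checks $Y_\alpha=Q^*Z^\alpha Q$, so $\hY=Q^*ZQ\in\mcop(n)$.

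It remains to show $\mcop(n)$ is closed and bounded and contains $\mco\cD_p(n)$. Boundedness is immediate from archimedeanness: any $X=V^*ZV\in\mcop(n)$ satisfies $K^2-\sum X_j^2 = V^*(K^2-\sum Z_j^2)V\succeq 0$, so $\|X\|\le K\sqrt g$. For closedness, suppose $X^{(k)}\to X$ in $\mcop(n)$. By the preceding paragraphs each $X^{(k)}=\widehat{Y^{(k)}}$ with $Y^{(k)}\in \Lift_p(n)$ coming from some $Z^{(k)}\in\cO_p$; since $\|Z^{(k)}_j\|\le K$ we have $\|Y^{(k)}_\alpha\|\le K^{|\alpha|}$ uniformly in $k$. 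A diagonal extraction produces a subsequence along which $Y^{(k)}_\alpha$ converges entrywise for every $\alpha$ to some $Y_\alpha$; the semidefiniteness of $H(Y)$ and $\hs_p(Y)$ passes to the limit on each finite principal block, and the normalization/symmetry are preserved, so $Y\in\Lift_p(n)$, and $\hY=X$ by looking at the $x_j$-entries. Finally $\mco\cD_p(n)\subseteq \mcop(n)$ because any matrix tuple in $\cD_p$ is in particular an operator tuple in $\cO_p$ (viewing $\FF^m\subseteq \cK$), and isometries between finite-dimensional spaces are isometries between the ambient Hilbert spaces.

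The only genuinely delicate step is the boundedness of $Z_j$: without archimedean one does not get a bounded operator, and the rest of the proof is a direct adaptation of Theorem \ref{thm:true} plus a routine compactness/diagonal argument.
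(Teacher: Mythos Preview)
Your proposal is correct and matches the paper's proof: the inclusion $\hLift_p\subseteq\mcop$ via the GNS construction with archimedean-forced boundedness of the $Z_j$ is exactly the content of Lemma~\ref{lem:basiclasserre}, and your diagonal-extraction argument for closedness is Lemma~\ref{lem:archtobound} together with the final paragraph of the paper's proof. One cosmetic slip: in the boundedness step the displayed equality $K^2-\sum X_j^2 = V^*(K^2-\sum Z_j^2)V$ is false in general since $X_j^2=(V^*Z_jV)^2\ne V^*Z_j^2V$; however $(V^*Z_jV)^2\preceq V^*Z_j^2V$, so one has $K^2-\sum X_j^2\succeq V^*(K^2-\sum Z_j^2)V\succeq 0$ and your conclusion stands.
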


\subsubsection{Proof of Theorem {\rm\ref{thm:infinitelasserre}}}
 The proof begins with several lemmas.

\begin{lemma}
 \label{lem:arch-bounded}\rm
  If  $p$ is archimedean, then $\cO_p$ is uniformly bounded. 
\end{lemma}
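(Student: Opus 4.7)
The plan is to evaluate the archimedean identity \eqref{eq:archimed} at any $X\in\cO_p$ and read off the bound $C = K^2$ directly.

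First I would fix $K>0$ and matrix polynomials $s_j, f_j$ witnessing that $p$ is $K$-archimedean, so that
\[
K^2 - \sum_j x_j^2 \;=\; \sum_j s_j^* s_j \;+\; \sum_j f_j^* p\, f_j
\]
(with dimensions interpreted so both sides are square matrix polynomials of compatible size; the left side is understood as the scalar $K^2-\sum x_j^2$ times an identity matrix of the appropriate size). This is an identity in $\FF^{\cdot\times\cdot}\ax$, so it holds after any $*$-evaluation on operators. Thus for any $K\in\LK$ and any $X\in\cO_p(K)$, evaluating both sides yields
\[
K^2 I - \sum_j X_j^2 \;=\; \sum_j s_j(X)^* s_j(X) \;+\; \sum_j f_j(X)^*\, p(X)\, f_j(X).
\]

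Next I would observe that each summand on the right-hand side is positive semidefinite. The terms $s_j(X)^* s_j(X)$ are manifestly of the form $T^*T\succeq 0$. For the terms $f_j(X)^* p(X) f_j(X)$, I use the defining property that $X\in\cO_p$ means $p(X)\succeq 0$; writing $p(X) = p(X)^{1/2}\,p(X)^{1/2}$, the term equals $\bigl(p(X)^{1/2} f_j(X)\bigr)^* \bigl(p(X)^{1/2} f_j(X)\bigr) \succeq 0$. Adding these up gives $K^2 I - \sum_j X_j^2 \succeq 0$, which is exactly the uniform boundedness of $\cO_p$ with constant $C = K^2$.

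There is essentially no obstacle here; the argument is one line once the archimedean identity is evaluated. The only thing to be mildly careful about is the matrix-valued bookkeeping when $p$ is $\ell\times\ell$: the $f_j$ will be rectangular free matrix polynomials chosen so that $f_j^* p f_j$ is a square block of the same size as $s_j^* s_j$ and $K^2 - \sum x_j^2$ (viewed with an identity matrix factor). Once this is set up correctly, positivity of every summand is immediate and the conclusion follows.
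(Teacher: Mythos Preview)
Your proof is correct and is essentially the same as the paper's: evaluate the archimedean identity at $X\in\cO_p$ and observe that every term on the right is positive semidefinite, yielding $K^2 I - \sum_j X_j^2 \succeq 0$. (Minor quibble: you reuse the symbol $K$ for both the archimedean constant and the subspace $K\in\LK$; rename one to avoid the clash.)
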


\begin{proof}
If $p$ is archimedean, then by \eqref{eq:archimed}
there is $N\in\N$ with $N-\sum_j x_j^2 |_{\cO_p}\succeq0$.
Hence $\cO_p \subseteq \big\{ X\in\smatog : \|X\|^2\leq N\big\}.$
\end{proof}

\begin{lemma}
 \label{lem:basiclasserre}\rm
   If $Y\in \Lift_p(n)$, then there exist
 \begin{enumerate}[\rm (i)]
  \item  a Hilbert space $\cH$;
  \item  a dense subset $\cP$ of $\cH;$
  \item  a tuple $Z=(Z_1,\dots,Z_g)$    such that each $Z_j:\cP\to \cP$
     is self-adjoint in the sense that $\langle Z_j p,q\rangle = \langle p,Z_jq\rangle$
    for each $p,q\in \cP$; and
  \item  an isometry $V:\FF^n\to \cP$
\end{enumerate}
  such that
 \begin{enumerate}[\rm (a)]
   \item $p(Z):\cP\to \cP$ is positive semidefinite; 
   \item 
    \label{it:inopCp0}
        $\hat{Y} = V^* ZV$; and
        \item\label{it:inopCp}
        if in addition $p$ is $K$-archimedean, then 
  each $Z_j$ is a bounded operator $($and so extends to all of $\cH)$ with
 $K^2-\sum Z_j^2 \succeq 0$. 
   Hence the $\hat Y$ from \eqref{it:inopCp0} is in $\mcop$. 
 \end{enumerate}
\end{lemma}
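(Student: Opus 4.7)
The plan is to mimic the GNS-style construction from the proof of Theorem~\ref{thm:true}, but without the finite-rank assumption on $H(Y)$; the completion step then yields a genuine Hilbert space rather than a finite-dimensional one, the operators $Z_j$ are only a priori densely defined and symmetric, and the archimedean hypothesis is used to upgrade them to bounded self-adjoint operators.

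Concretely, first I would set $\cV = \FF\ax \otimes \FF^n$ and define the sesquilinear form $[\cdot,\cdot]_Y$ on $\cV$ exactly as in \eqref{eq:hankform}. Because $H(Y)\succeq 0$, Proposition~\ref{prop:riesz}(1) (applied to $s=n$, $t=1$) yields that this form is positive semidefinite. Set $\cN = \{s\in\cV : [s,s]_Y=0\}$, let $\cH$ be the Hilbert space completion of $\cV/\cN$, and let $\cP\subseteq\cH$ be the dense image of $\cV$. The same computation as in the proof of Theorem~\ref{thm:true} shows $x_j \cN \subseteq \cN$, so $Z_j[s] := [x_j s]$ is a well-defined linear map $\cP\to\cP$, and the identity $Y_{\alpha^*}=Y_\alpha^*$ makes each $Z_j$ symmetric on $\cP$. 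The isometry $V:\FF^n\to\cP$, $Vv = [\emptyset\otimes v]$, is isometric because $Y_\emptyset=I_n$, and a direct calculation gives $V^* Z^\alpha V = Y_\alpha$ for every word $\alpha$, establishing $\hat Y = V^*ZV$. The positivity of $p(Z)$ on $\cP\otimes\FF^\ell$ from (a) then follows by a verbatim copy of the last display in the proof of Theorem~\ref{thm:true}, using $\hs_p(Y)\succeq 0$, or equivalently by invoking Proposition~\ref{prop:riesz}(3) with $t=1$.

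The main obstacle, and the only genuinely new ingredient, is part (c): boundedness of the $Z_j$. Here I would use the $K$-archimedean identity
\[
 K^2 - \sum_j x_j^2 = \sum_i s_i^* s_i + \sum_i f_i^* \, p \, f_i
\]
from \eqref{eq:archimed}. For $\xi\in\cP$, the key computation is
\[
 K^2 \|\xi\|_\cH^2 - \sum_j \|Z_j\xi\|_\cH^2
  = \bigl[\bigl(K^2-\textstyle\sum_j x_j^2\bigr)\xi,\xi\bigr]_Y
  = \sum_i [s_i^* s_i \xi,\xi]_Y + \sum_i [f_i^* p f_i \,\xi,\xi]_Y.
\]
The first sum on the right is $\sum_i \|s_i(Z)\xi\|^2\ge 0$ by Proposition~\ref{prop:riesz}(1), while the second equals $\sum_i \langle p(Z) f_i(Z)\xi, f_i(Z)\xi\rangle \ge 0$ by Proposition~\ref{prop:riesz}(3), since $\hs_p(Y)\succeq 0$. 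Consequently $\sum_j \|Z_j\xi\|^2 \le K^2\|\xi\|^2$ for every $\xi\in\cP$, so each $Z_j$ is a bounded symmetric operator on the dense subspace $\cP$ with $\|Z_j\|\le K$; it therefore extends uniquely to a bounded self-adjoint operator on $\cH$. Passing to the extension, the inequality $\sum_j Z_j^2 \preceq K^2 I$ persists by continuity, and $p(Z)\succeq 0$ likewise extends from $\cP\otimes\FF^\ell$ to all of $\cH\otimes\FF^\ell$. Thus $Z\in\cO_p(\cH)$, $V:\FF^n\to\cH$ is isometric, and $\hat Y = V^*ZV$ belongs to $\mcop$, completing the argument.

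The one bookkeeping subtlety I would be careful about is that $p$, $s_i$, $f_i$ are matrix-valued, so the archimedean identity lives in $\FF^{\ell\times\ell}\ax$ (with compatible sizes for $s_i$ and $f_i$). Both bullets of Proposition~\ref{prop:riesz} are already stated for matrix polynomials, so the estimates above go through once one tensors $\xi\in\cP$ with the appropriate $\FF^\ell$-factor and interprets $s_i(Z),f_i(Z),p(Z)$ as operators on the corresponding amplification; no new idea is needed, just careful tracking of the block structure.
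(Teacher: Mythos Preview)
Your proposal is correct and follows essentially the same GNS-type construction as the paper's proof: both build the pre-inner product on $\FF\ax\otimes\FF^n$ from $H(Y)$, mod out the null space, let $Z_j$ act by multiplication by $x_j$, define $V$ by $v\mapsto[\emptyset\otimes v]$, and then use the archimedean identity together with $p(Z)\succeq 0$ to get $K^2-\sum_j Z_j^2\succeq 0$. The only differences are cosmetic---you spell out the completion step and the boundedness estimate more explicitly and route the positivity arguments through Proposition~\ref{prop:riesz}, whereas the paper simply refers back to the computations in the proof of Theorem~\ref{thm:true}.
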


\begin{proof}
  Following the proof of Theorem \ref{thm:true}, given 
  a moment sequence $(Y_\alpha) \in \Lift_p(n)$, define the 
  pre-inner product $[\tvs,\, \tvs]$ on
  $\mathscr{R}=\FF\langle x\rangle \otimes \FF^n$,
  as in Equation \eqref{eq:hankform}. 
   Let 
\[
 \cN = \{f\in\FF\langle x \rangle \otimes \FF^{n} : [f,f]=0\}.
\]
 A standard argument shows that $\cN$ is a subspace of $\mathscr{R}$
  and that the form 
\[
  [ f, g]=[f+\cN,g+\cN]
\]
 is well defined and positive definite
 on the  quotient $\cP$ of $\mathscr{R}$ by $\cN$. 

   The operators $Z_j$ of multiplication
  by $x_j$ are as before 
  (see the proof of Theorem \ref{thm:true})
  well defined relative to this pre-inner product;
  i.e., each $Z_j:\cP\to \cP$. 
  Moreover, 
  \begin{equation}\label{eq:pz}
  p(Z)\succeq 0
  \end{equation}
  on $\cP$ too.  Define $V:\FF^n \to \mathscr{R}$ by 
\[
  Vh = (\emptyset \otimes h)+\cN.
\]
  Then $V$ is an isometry (since $Y_{\emptyset}=I$) and $V^*ZV =\hat{Y}$.

  Let us show that the $Z_j$ are bounded 
  under the archimedean hypothesis.  By $K$-archimedeanity of $p$,
  \[
  K^2-\sum_j x_j^2 = \sum_i f_i^*f_i + \sum_k r_k^* p r_k
  \]
  for some free polynomials $f_i,r_k$. It is now clear that \eqref{eq:pz}
  implies $K^2-\sum_j Z_j^2\succeq0$, i.e., $\|Z\|^2\leq K^2$ so $Z$ is bounded. Then
  by definition, $\hat{Y}\in \mcop(n)$. 
\end{proof}

 The proof of the moreover statement in Theorem \ref{thm:infinitelasserre}
  will use the following lemma.

\begin{lemma}
 \label{lem:archtobound}\rm
   If $p$ is archimedean, then for each $\alpha$ there is 
   a constant $C_\alpha$ such that if $Y\in\Lift_p$, then
  $\|Y_\alpha\|\le C_\alpha$. Further, 
   if $(Y^j)_j=((Y_\alpha^j)_\alpha)_j$ is a sequence from $\Lift_p(n)$ satisfying for each $\alpha$ there
 is a $Y_\alpha$ such that $(Y_\alpha^j)_j$ converges to $Y_\alpha$, then
  $Y=(Y_\alpha)_\alpha \in\Lift_p(n)$.
\end{lemma}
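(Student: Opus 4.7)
The plan is to leverage the archimedean identity for $p$ by translating it along words and feeding the result into the Riesz map from Section~\ref{sec:Riesz}. Assuming $p$ is $K$-archimedean,
\[ K^2 - \sum_j x_j^2 \;=\; \sum_i s_i^* s_i + \sum_k f_k^* p f_k, \]
with $s_i$ and $f_k$ appropriately sized matrix polynomials (in particular $f_k\in\FF^{\ell\times 1}\ax$ when $p$ is $\ell\times\ell$). For a word $\beta\in\ax$, multiply on the left by $\beta^*$ and on the right by $\beta$:
\[ K^2\beta^*\beta - \sum_j \beta^* x_j^2\beta \;=\; \sum_i (s_i\beta)^*(s_i\beta) + \sum_k (f_k\beta)^* p (f_k\beta). \]
Apply the scalar Riesz map $\Phi_Y^1$. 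By items (1) and (3) of Proposition~\ref{prop:riesz}, which use exactly the hypotheses $H(Y)\succeq 0$ and $\hs_p(Y)\succeq 0$ built into the definition of $\Lift_p$, the image of the right-hand side is positive semidefinite. Linearity of $\Phi_Y^1$ together with $Y_\emptyset=I$ then yields
\[ K^2\, Y_{\beta^*\beta} - \sum_j Y_{\beta^* x_j^2 \beta}\;\succeq\; 0. \]
Since each summand $Y_{\beta^* x_j^2 \beta} = Y_{(x_j\beta)^*(x_j\beta)}$ is itself positive semidefinite (it is a diagonal block of $H(Y)$), I obtain the key recursion
\[ Y_{(x_j\beta)^*(x_j\beta)} \;\preceq\; K^2\, Y_{\beta^*\beta}. \]

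Iterating the recursion along the letters of a word $\alpha$ of length $d$ gives $\|Y_{\alpha^*\alpha}\|\le K^{2d}$. The $2\times 2$ principal block of $H(Y)$ indexed by $\{\emptyset,\alpha\}$ is
\[ \begin{pmatrix} I & Y_\alpha \\ Y_\alpha^* & Y_{\alpha^*\alpha}\end{pmatrix}\;\succeq\; 0, \]
and positivity of this block gives $Y_\alpha^* Y_\alpha \preceq Y_{\alpha^*\alpha}$, hence $\|Y_\alpha\|^2\le\|Y_{\alpha^*\alpha}\|\le K^{2|\alpha|}$. Thus I may take $C_\alpha := K^{|\alpha|}$.

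For the sequential closure claim, observe that every defining condition of $\Lift_p(n)$ is closed under entrywise limits. The normalization $Y_\emptyset=I$ and the symmetry $Y_{\alpha^*}=Y_\alpha^*$ pass to pointwise limits. Positive semidefiniteness of the (possibly infinite) matrices $H(Y)$ and $\hs_p(Y)$ is tested on arbitrary finite principal submatrices, each of which is a continuous function of finitely many $Y_\gamma$; hence the psd condition descends from the approximants $Y^j$ to the limit $Y$. Therefore $Y\in\Lift_p(n)$.

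The main bookkeeping obstacle is verifying that the terms $(s_i\beta)^*(s_i\beta)$ and $(f_k\beta)^* p (f_k\beta)$ match the exact shape hypotheses of Proposition~\ref{prop:riesz}, namely taking $(t,s)=(m_i,1)$ in item (1) and $(t,s)=(1,1)$ in item (3) so that $I_t\otimes p = p$; once these shapes are tracked, the induction and the closure argument are both routine.
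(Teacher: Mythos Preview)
Your argument is correct. For the second part (sequential closure) your proof and the paper's are the same: reduce to finite principal blocks and pass to the limit.

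For the first part, however, you take a genuinely different route. The paper invokes Lemma~\ref{lem:basiclasserre}: given $Y\in\Lift_p(n)$ one runs the GNS construction to produce a tuple $Z$ of bounded operators with $p(Z)\succeq 0$ and $K^2-\sum Z_j^2\succeq 0$ together with an isometry $V$ so that $Y_\alpha = V^* Z^\alpha V$; the bound $\|Y_\alpha\|\le K^{|\alpha|}$ is then immediate from $\|Z_j\|\le K$. You instead stay entirely on the moment side, feeding $\beta^*(K^2-\sum x_j^2)\beta$ into the Riesz map and extracting the recursion $Y_{(x_j\beta)^*(x_j\beta)}\preceq K^2 Y_{\beta^*\beta}$ directly from Proposition~\ref{prop:riesz}. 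This is more elementary in that it avoids the GNS machinery, and in fact it is precisely the technique the paper itself uses later to prove the \emph{truncated} version, Lemma~\ref{lem:archdegbds}. So your approach has the virtue of being uniform across the truncated and untruncated cases, while the paper's proof here exploits the already-built operator model to get the bound in one line.
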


\begin{proof}
  Suppose $p$ is $K$-archimedean.  
  By Lemma \ref{lem:basiclasserre}, given $Y\in\Lift_p(n)$
  there exists an operator tuple $Z$
  acting on a Hilbert space $\cH$  with $K^2-\sum Z_j^2\succeq 0$
  and $p(Z)\succeq 0$ as well as
  an isometry $V:\FF^n\to \cH$ such that
   $Y_\alpha = V^*Z^\alpha V$. Letting $|\alpha|$ denote the length
  of the word $\alpha$, it is immediate that
\[
  \|Y_\alpha\| \le K^{|\alpha|}. 
\]

  For the second part of the lemma, note that for $d$ fixed, each
 $H_d(Y^j)$ is positive semidefinite.  Since $H_d(Y^j)$ is a (finite) matrix
 and depends only upon $|\alpha|\le 2d$, it follows that 
 $(H_d(Y^j))_j$ converges to $H_d(Y)$. Thus $H_d(Y)$ is positive semidefinite.
  Since $H_d(Y)$ is positive semidefinite for all $d$, it follows
  that $H(Y)$ is also positive semidefinite.

  In a similar manner, each $\hs_{p,d}(Y^j)$ is positive semidefinite and,
 for $d$ fixed, $(\hs_{p,d}(Y^j))_j$ converges to $\hs_{p,d}(Y)$ and thus
  $\hs_{p,d}(Y)$ is positive semidefinite. It follows that
  $\hs_p(Y)$ is positive semidefinite. Thus $Y\in\Lift_p$.  
\end{proof}

\begin{proof}[Proof of Theorem {\rm\ref{thm:infinitelasserre}}]
 From Lemma \ref{lem:basiclasserre} it follows 
  that $\hLift_p\subset \mcop$.   The reverse inclusion
  follows along the lines of the proof of the similar
 statement in Theorem \ref{thm:true}. Namely, 
  simply observe that if $Z\in\cO_p$ and $V$
 is an isometry from $\FF^n$ into the space $\cH$ that
  $Z$ acts on, then $Y_\alpha = V^* Z^\alpha V$ defines
  a moment sequence $Y\in\Lift_p(n)$. 

  The inclusion $\cD_p \subset \mco\cD_p$ is evident. Likewise the
 archimedean hypothesis and Lemma \ref{lem:arch-bounded} readily imply
  the boundedness of $\mcop$. Thus, to finish the proof of the
 moreover statement, it remains to show each $\mcop(n)$ is closed.
  Accordingly, suppose $(X^{(j)})$ is a sequence from $\mco \cD_p(n)$ which converges
  to some $X\in\smatng$.  In particular, the $X^{(j)}$ act on $\FF^n$
  and for each $k$, the sequence $(X^{(j)}_k)_j$ converges to $X_k$. 
 
 For each $j$ there is a tuple $Z^{(j)}\in\cO_p$ acting on a Hilbert space $K_j$ 
  and an isometry $V_j:\FF^n\to K_j$ such that 
\[
  X^{(j)} = V_j^* Z^{(j)} V_j.
\]
  The moment sequence, $(Y_\alpha^{(j)})$ coming from the pairs $(Z^{(j)},V_j)$  via
\[
  Y_\alpha^{(j)} = V_j^* (Z^{(j)})^\alpha V_j
\]
 is in $\Lift_p$.

 For fixed $\alpha$, the hypothesis and Lemma \ref{lem:archtobound} together  imply
 that the sequence $(Y_\alpha^{(j)})$ is bounded 
  and thus has a convergent subsequence. 
  Thus, by passing to a subsequence
  (using the usual diagonalization argument) we can assume
 that, for each $\alpha,$ there is a $Y_\alpha$
 to which  $Y_\alpha^{(j)}$  converges. 
  By the second part of Lemma \ref{lem:archtobound},
  this moment sequence $(Y_\alpha)$
  belongs to $\Lift_p$. Hence the corresponding operator $Z$ 
  from Lemma \ref{lem:basiclasserre} satisfies
  $p(Z)\succeq 0.$ Thus $Z\in\cO_p$. By construction,
\[
  V_0^* Z V_0 = (Y_{x_1},\dots,Y_{x_g}) = X.
\]
 Hence $X\in \mcop(n)$ and therefore $\mcop(n)$ is closed.
\end{proof}

\begin{remark}\rm
 Note that the reverse inclusion, \[\mcop(n) \subset \cmco \cD_p(n),\]  holds exactly when 
 matrices - and not operators - suffice in the
 \cite{HM04a} Positivstellensatz.   Indeed, $\mcop$
 is the intersection of all $\cD_L$ for monic $L$
 such that $L(Z)\succeq 0$ for all $Z\in\cO_p$.
 On the other hand, $\cmco\cD_p$
  is the intersection of all $\cD_L$ for monic $L$
 such that $L(Z)\succeq 0$ for all $Z\in\cD_p$.
This theme was discussed in Corollary \ref{cor:hb} above; see also Subsection \ref{subsec:LPP}.
\end{remark}

\section{Truncated Moments - Approximations of the Matrix Convex Hull}
\label{sec:preEx}
  This section presents the second part of the  Lasserre--Parrilo construction 
  in the free setting. It consists of a sequence of truncations of the lift $\cL_p$
  of $\cD_p$    from Section \ref{sec:freelassy} to a sequence of finite free spectrahedra and
  corresponding projections onto increasingly finer outer approximations to the operator convex hull of $\cD_p$.
  Alternately, the construction can be thought of as producing a sequence
  of approximate free spectrahedral lifts of a given free  semialgebraic set $\cD_p$
  to LMI domains in increasingly many variables.

  Whether  this construction produces the matrix convex hull at a finite stage is a basic question. 
 In Subsection \ref{subsec:74} we give  examples where the answer is yes.  In fact, in these 
 the convex hulls involved require
 no lifts, they are themselves free spectrahedra. 
 In the other direction, for the 
TV screen
 the construction does not produce the matrix convex hull
  at  the first  stage, cf.~Section \ref{sec:ex}.

\subsection{Main  Formulas for Lifts}
\label{subsec:mainResults}

To state the  main result of this paper  precisely, 
for  $n\in\N$ and $d\in\N_0$, let
\begin{multline*}
  \Lift_p(n;d) : = \big\{ (Y_\al)_\al : |\alpha|\le 2d+\deg p+1
  ,  \ Y_\al \in \FF^{n\times n},  \ Y_\emptyset =I, \ Y_{\al^*}=Y_\al^*,  \\
  H_{d + \left\lceil\frac12\deg p\right\rceil} (Y) \succeq 0, \ \hs_{p,d}(Y)  \succeq  0 \big\}.
\end{multline*}
The sequence $\Lift_p(\textvisiblespace;d)=(\Lift_p(n;d))_n$ is a free convex set and, 
as before,  $\hLift_p(n;d)$ denotes the image of the projection
\[
  \Lift_p(n;d) \ni Y \mapsto \hY = (Y_{x_1},\dots,Y_{x_g}) \in\smatng.
\]

\begin{thm}[Clamping down theorem]
\label{thm:intersect}
 If $p$ is archimedean, then for each $n$,
\[
  \bigcap_{d=0}^\infty \hLift_p(n;d) = \hLift_p(n).
\]
\end{thm}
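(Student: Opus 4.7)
The inclusion $\hLift_p(n)\subseteq\bigcap_d \hLift_p(n;d)$ is immediate: any $Y\in\Lift_p(n)$ truncated to words of length at most $2d+\deg p+1$ lies in $\Lift_p(n;d)$ and projects to the same $\hat Y$.

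For the reverse inclusion, I plan to fix $X\in\bigcap_d \hLift_p(n;d)$ and, for each $d$, choose $Y^{(d)}\in\Lift_p(n;d)$ with $\widehat{Y^{(d)}}=X$, then pass to a limit via a Cantor diagonal argument. The key step, and the main obstacle, is to establish a bound on $\|Y^{(d)}_\alpha\|$ that is uniform in $d$ for each fixed word $\alpha$; this is precisely where the archimedean hypothesis does its work. Starting from the $K$-archimedean certificate \eqref{eq:archimed} and multiplying on the left and right by $\alpha^*$ and $\alpha$ yields
\[
\alpha^*\bigl(K^2-\sum_j x_j^2\bigr)\alpha
 = \sum_i (s_i\alpha)^*(s_i\alpha)+ \sum_k (f_k\alpha)^* p (f_k\alpha).
\]
For $d$ sufficiently large in terms of $|\alpha|$ and the degrees of $s_i$ and $f_k$, Proposition \ref{prop:riesz}(2) and (4) (taking $s=t=1$) ensure that the Riesz map $\Phi^1_{Y^{(d)}}$ sends each summand on the right to a positive semidefinite matrix, so
\[
K^2\, Y^{(d)}_{\alpha^*\alpha}- \sum_j Y^{(d)}_{(x_j\alpha)^*(x_j\alpha)}\succeq 0.
\]
Combined with the base case $Y^{(d)}_\emptyset=I$, a straightforward induction on $|\alpha|$ gives $Y^{(d)}_{\alpha^*\alpha}\preceq K^{2|\alpha|}I$. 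The Schur complement applied to the positive semidefinite $2\times 2$ principal block
\[
\begin{pmatrix} I & Y^{(d)}_\alpha \\ (Y^{(d)}_\alpha)^* & Y^{(d)}_{\alpha^*\alpha}\end{pmatrix}
\]
of $H_{d+\dpovertwo}(Y^{(d)})$ then yields $\|Y^{(d)}_\alpha\|^2\le\|Y^{(d)}_{\alpha^*\alpha}\|\le K^{2|\alpha|}$, as desired.

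With the uniform bound in hand, a Cantor diagonal extraction produces a subsequence $(Y^{(d_k)})_k$ such that $Y^{(d_k)}_\alpha\to Y_\alpha$ in $\FF^{n\times n}$ for every word $\alpha$. The limit $Y=(Y_\alpha)_\alpha$ clearly satisfies $Y_\emptyset=I$ and $Y_{\alpha^*}=Y_\alpha^*$. For any fixed $m$, the matrices $H_m(Y^{(d_k)})$ and $\hs_{p,m}(Y^{(d_k)})$ depend only on finitely many moments and are positive semidefinite once $d_k$ is large enough; they converge entrywise to $H_m(Y)$ and $\hs_{p,m}(Y)$, which are therefore positive semidefinite. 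Letting $m\to\infty$ gives $H(Y)\succeq 0$ and $\hs_p(Y)\succeq 0$, so $Y\in\Lift_p(n)$. Since $Y_{x_j}=\lim_k Y^{(d_k)}_{x_j}=X_j$ for each $j$, we have $\hat Y=X\in\hLift_p(n)$, completing the proof.
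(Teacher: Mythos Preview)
Your proof is correct and follows essentially the same strategy as the paper's. The paper isolates the uniform bound $\|Y^{(d)}_\alpha\|\le C^{|\alpha|}$ as a separate lemma (Lemma~\ref{lem:archdegbds}), phrasing it via positivity of the truncated localizing matrix for $q_j=C^2-x_j^2$, whereas you carry out the same argument inline using the Riesz map and Proposition~\ref{prop:riesz}; the induction on $|\alpha|$ and the $2\times 2$ Hankel-block estimate are identical in spirit (the paper factors a general word as $\alpha=\beta^*\gamma$ rather than using $\emptyset,\alpha$, but your version suffices). One small quibble: in your appeal to Proposition~\ref{prop:riesz}(2) the parameter $t$ should be the number of rows of $s_i$ (and for (4), $t=1$ with $f_k\alpha\in\FF^{\ell\times 1}\langle x\rangle$), not necessarily $t=1$; the proposition still applies, so this does not affect the argument.
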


\begin{cor}
 \label{cor:seqlasserre}
   If $p$ is archimedean, then $\mcop(n) = \hLift_p(n)$ for each $n\in\N$. 
   Hence the sets $\hLift_p(n;d) $ close down on $\mcop(n)$.
   Further, for each $d$ there exists a  linear pencil $L_d$ such that
   $\hLift_p(\textvisiblespace;d)$ lifts to $\cD_{L_d}.$ Thus $\hLift_p(\textvisiblespace;d)$
   is a sequence of free spectrahedrops which are outer approximations to $\mcop$ 
   and which converge monotonically to $\mcop$ as $d$ tends to infinity.
\end{cor}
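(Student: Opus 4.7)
The plan is to stitch together the two main theorems already at hand: Theorem \ref{thm:infinitelasserre} which gives the equality $\mcop(n)=\hLift_p(n)$, and the ``clamping down'' Theorem \ref{thm:intersect} which says that the truncated lifts intersect down to $\hLift_p(n)$. Granted these, the first two sentences of the corollary are immediate: combining them yields
\[
\bigcap_{d=0}^\infty \hLift_p(n;d)\;=\;\hLift_p(n)\;=\;\mcop(n),
\]
so the $\hLift_p(n;d)$ do close down on $\mcop(n)$.

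The substantive part of the corollary is that for each fixed $d$ the set $\hLift_p(\tvs;d)$ is a free spectrahedrop; i.e., the projection of an honest \emph{finite} free spectrahedron $\cD_{L_d}$. For this I would parametrize the free variables of the lift explicitly. A tuple $Y=(Y_\alpha)_{|\alpha|\leq 2d+\deg p+1}$ in $\Lift_p(n;d)$ is determined, modulo $Y_\emptyset=I$ and the symmetry relation $Y_{\alpha^*}=Y_\alpha^*$, by free self-adjoint entries: for each $\alpha=\alpha^*$ take one self-adjoint matrix variable $y_\alpha$, and for each unordered pair $\{\alpha,\alpha^*\}$ with $\alpha\neq\alpha^*$ take two self-adjoint matrix variables $c_\alpha,s_\alpha$ representing the real and imaginary parts of $Y_\alpha$. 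With this bookkeeping, the blocks of the truncated Hankel matrix $H_{d+\lceil \deg p/2\rceil}(Y)$ and of the truncated localizing matrix $\hs_{p,d}(Y)$ become affine linear expressions in these self-adjoint free variables, so their direct sum is a bona fide affine linear pencil
\[
L_d(x,y,c,s) \;=\; H_{d+\lceil \deg p/2\rceil}(Y)\;\oplus\;\hs_{p,d}(Y),
\]
in which the $x$-variables correspond to the words of length one. The conditions $Y_\emptyset=I$ and $Y_{\alpha^*}=Y_\alpha^*$ are eliminated by the parametrization and absorbed as constants or identified variables. Then by construction $\cD_{L_d}$ is a free spectrahedron whose projection onto the $x$-variables is precisely $\hLift_p(\tvs;d)$; invoking Lemma \ref{lem:boundedmonicLift} we may further replace $L_d$ by a monic pencil, provided $\hLift_p(\tvs;d)$ has $0$ in its interior (which holds for large enough $d$ since it contains $\mcop$, and $p(0)\succ 0$).

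Monotonicity is immediate from the definitions: raising $d$ only adds more positivity constraints on overlapping blocks of $Y$, so $\Lift_p(n;d+1)\subseteq \Lift_p(n;d)$, and projecting gives $\hLift_p(n;d+1)\subseteq\hLift_p(n;d)$. Every $Y$ arising from an isometric compression of some $Z\in\cO_p$ satisfies all of the truncated positivity conditions simultaneously, so $\mcop(n)\subseteq \hLift_p(n;d)$ for all $d$, showing that the $\hLift_p(\tvs;d)$ are indeed outer approximations. The convergence assertion is then just the equality of the intersection with $\mcop(n)$ displayed above.

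I expect the main obstacle to be purely bookkeeping: writing out the free variables in $L_d$ carefully so that the complex entries of the non-self-adjoint blocks $Y_\alpha$ (for $\alpha\neq\alpha^*$) are correctly encoded by self-adjoint matrix variables, and verifying that after this encoding the blocks of $H_{d+\lceil\deg p/2\rceil}$ and $\hs_{p,d}$ depend affine-linearly on those variables in the sense required by Section \ref{sec:cp}. Once this is set up, everything else is a direct application of the theorems already proved.
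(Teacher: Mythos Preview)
Your proposal is correct and follows essentially the same route as the paper. The paper likewise derives the first two assertions directly from Theorems \ref{thm:infinitelasserre} and \ref{thm:intersect}, and handles the spectrahedrop claim by first writing $\Lift_p(\tvs;d)=\cD_\Delta$ for a ``mixed'' pencil $\Delta$ in symmetric $x$-variables and non-symmetric $y$-variables, then converting $\Delta$ to an honest linear pencil in symmetric variables via the decomposition $y_\ell=w_\ell+i\,w_{-\ell}$ (packaged as Lemma \ref{prop:liftMeToSym}); this is exactly your $c_\alpha,s_\alpha$ bookkeeping done once and for all. Your appeal to Lemma \ref{lem:boundedmonicLift} to make $L_d$ monic is harmless but unnecessary, since the corollary only asks for a linear pencil, not a monic one.
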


Corollary \ref{cor:seqlasserre} is an immediate consequence of Theorems  \ref{thm:intersect} and \ref{thm:infinitelasserre},
  and the fact that there exists a linear pencil  $L_d$ such that $\proj_x\cD_{L_d}=\hLift_p(\textvisiblespace;d)$ as we now explain.

\subsection{\texorpdfstring{$\hLift_p(\textvisiblespace ;d)$}{L--P lifts} are free spectrahedrops}

The free Lasserre--Parrilo construction produces the  approximate lifts 
  $\cD_\Delta=\Lift_p(\textvisiblespace;d)$  of $\mcop$,
 in which  $\cD_\Delta$ is the positivity  set of a linear matrix polynomial
\beq\label{eq:mixedUp}
  \Delta(x,y) = A_0 +\sum_{j=1}^g A_j x_j 
+ \sum_{\ell=1}^h \big( B_\ell y_\ell + B_\ell^* y_\ell^*)  
 \eeq
 where the coefficients are
 $k\times k$ self-adjoint matrices $A_0,\dots,A_g\in\mbS_k$, and $k\times k$ matrices
$B_1,\ldots,B_h\in\FF^{k\times k}$.
 This $\Delta$ can be naturally evaluated at tuples
 $(X,Y)\in \mbS_n^g \times (\FF^{n\times n})^h$ by
 \[
\Delta(X,Y) = A_0 \otimes I_n +\sum_{j=1}^g A_j \otimes X_j 
+ \sum_{\ell=1}^h \big( B_\ell \otimes Y_\ell + B_\ell^* \otimes Y_\ell^*)  
    \in \sar {nk}.
    \]
 While the coefficients  $A_j$ are self-adjoint and
the variables $x_j$ 
 are symmetric, 
  the coefficients and variables $B_\ell$ and  $y_\ell$ are not. Hence $\Delta$  is not a 
  linear pencil  according to the terminology in this article.
  The following lemma shows that 
  the coefficients $B_\ell$ and variables $y_\ell$ can be replaced with self-adjoint coefficients and symmetric
  variables in such a way as to obtain a  linear pencil $L$  such that $\proj_x \cD_\Delta= \proj_x \cD_L$.

\begin{lemma}\label{prop:liftMeToSym}
 Given a linear matrix polynomial $\Delta(x,y)$ as in \eqref{eq:mixedUp}  in $g$ symmetric and $h$ free variables,  there
  exists a linear pencil $L(x,w)$ in $g+2h$ variables  such that 
\[
\proj_x \cD_\Delta= \proj_x \cD_L.
\]
\end{lemma}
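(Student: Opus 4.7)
The plan is to replace each non-self-adjoint variable $y_\ell$ by two symmetric variables playing the role of its real and imaginary parts, and to absorb $B_\ell$ and $B_\ell^*$ into corresponding self-adjoint coefficients. On the matrix side, for $Y_\ell\in\FF^{n\times n}$ the self-adjoint matrices
\[
U_\ell := \tfrac12(Y_\ell + Y_\ell^*), \qquad V_\ell := \tfrac{1}{2i}(Y_\ell - Y_\ell^*)
\]
give a real-linear bijection $\FF^{n\times n}\to \mbS_n\times\mbS_n$ with inverse $(U_\ell,V_\ell)\mapsto U_\ell + iV_\ell$. On the coefficient side, set
\[
C_\ell := B_\ell + B_\ell^*\in\mbS_k, \qquad D_\ell := i(B_\ell - B_\ell^*)\in\mbS_k;
\]
both are self-adjoint.

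A direct expansion, using $Y_\ell = U_\ell + iV_\ell$ and $Y_\ell^* = U_\ell - iV_\ell$, yields
\[
B_\ell \otimes Y_\ell + B_\ell^* \otimes Y_\ell^* \;=\; C_\ell \otimes U_\ell + D_\ell \otimes V_\ell.
\]
With this identity in hand, define the linear pencil in the $g+2h$ symmetric variables $w=(u,v)=(u_1,\dots,u_h,v_1,\dots,v_h)$ by
\[
L(x,u,v) \;:=\; A_0 + \sum_{j=1}^g A_j x_j + \sum_{\ell=1}^h \bigl( C_\ell u_\ell + D_\ell v_\ell \bigr).
\]
All coefficients are self-adjoint, so $L$ is a bona fide linear pencil in the sense of Section~\ref{sec:cp}, and by the identity above one has $L(X,U,V) = \Delta(X,Y)$ whenever $Y_\ell = U_\ell + iV_\ell$.

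The equality of projections is then immediate from the bijection $Y\leftrightarrow (U,V)$. If $X\in\proj_x\cD_\Delta$, choose $Y$ with $\Delta(X,Y)\succeq 0$ and split $Y$ into its self-adjoint/skew-adjoint parts to obtain $(U,V)\in\mbS_n^h\times\mbS_n^h$ with $L(X,U,V)=\Delta(X,Y)\succeq 0$, so $X\in\proj_x\cD_L$. Conversely, given $(X,U,V)\in\cD_L$, set $Y_\ell := U_\ell + iV_\ell$; then $\Delta(X,Y)=L(X,U,V)\succeq 0$, and $X\in\proj_x\cD_\Delta$. There is no substantive obstacle here: the only computation of any weight is the short expansion showing $B_\ell\otimes Y_\ell + B_\ell^*\otimes Y_\ell^* = C_\ell\otimes U_\ell + D_\ell\otimes V_\ell$, and everything else is bookkeeping.
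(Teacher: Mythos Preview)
Your argument is correct and is essentially the same as the paper's: both split each free variable into a self-adjoint and a skew-adjoint part and correspondingly replace $B_\ell,B_\ell^*$ by the self-adjoint combinations $B_\ell+B_\ell^*$ and $i(B_\ell-B_\ell^*)$, yielding the identity $B_\ell\otimes Y_\ell+B_\ell^*\otimes Y_\ell^*=C_\ell\otimes U_\ell+D_\ell\otimes V_\ell$ and hence a genuine linear pencil with the same $x$-projection. Up to a harmless rescaling and sign convention your $L$ coincides with the paper's.
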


\begin{proof}
Write $B_\ell = C_\ell + i D_\ell$ for $C_\ell,D_\ell\in
\sac k$, and let $y_\ell =   w_{\ell} + i w_{-\ell}$, where
$w=(w_{-h},\ldots,w_{-1},$ $w_1,\ldots,w_h)$ are free symmetric variables. Then
\[
\begin{split}
B_\ell y_\ell + B_\ell^* y_\ell^* & = 
(C_\ell + i D_\ell) (  w_{\ell} + i w_{-\ell} ) + 
(C_\ell - i D_\ell) (  w_{\ell} - i w_{-\ell} ) \\
&= 2 ( C_\ell w_\ell - D_\ell w_{-\ell} ).
\end{split}
\]
Let 
\beq\label{eq:symComplex}
L(x,w) = A_0 +\sum_{j=1}^g A_j x_j 
+ 2 \sum_{\ell=1}^h \big( C_\ell w_\ell - D_\ell w_{-\ell})  
    \in \sac k \langle x,w \rangle.
\eeq
This is a linear pencil in symmetric variables with  self-adjoint coefficients.
By  construction,
\[
 \proj_x \cD_{L} = \proj_x \cD_{\Delta}. \qedhere
  \]
\end{proof}

\begin{proof}[Proof of Corollary {\rm\ref{cor:seqlasserre}}]
  There is a linear matrix polynomial $\Delta$ as in  \eqref{eq:mixedUp} such that 
  \[\Lift_p(\textvisiblespace;d)=\cD_\Delta\] and
  thus
  \[
 \hLift_p(\textvisiblespace;d)=\proj_x \cD_\Delta.\] 
Hence Lemma \ref{prop:liftMeToSym} yields a linear pencil $L_d$ with
  \[
\proj_x \cD_{L_d} = \proj_x \cD_{\Delta} =  \hLift_p(\textvisiblespace;d). \qedhere
\]
\end{proof}


\begin{remark}\label{rem:CvsR} {\it Free Sets in Real Variables.}
  The first part of  Corollary \ref{cor:seqlasserre} holds when 
  complex scalars, and thus complex self-adjoint matrices as well as complex polynomials,
  are replaced by real  scalars, symmetric matrices and real polynomials. 
  Call a linear matrix polynomial of the type in Equation \eqref{eq:mixedUp} a
  pencil in \df{mixed variables}. If the notion of a spectrahedrop is relaxed to
  include the projection of the positivity set $\cD_\Delta$ a pencil $\Delta$
  in mixed variables onto the $x$ (symmetric) variables,
  then the second part of Corollary \ref{cor:seqlasserre} holds over $\mathbb R$ too. 

  In the real setting, the construction of Lemma \ref{prop:liftMeToSym} expresses
  $B_\ell = C_\ell + D_\ell$ where $C_\ell$ is a symmetric matrix and $D_\ell$ is a skew-symmetric 
  matrix.  Thus, a mixed variable pencil can be replaced by a mixed variable pencil
  which is the sum of a  linear pencil in symmetric coefficients and variables and
  a homogeneous linear polynomial in skew-symmetric coefficients and variables. 
\end{remark}

\subsection{Examples}\label{subsec:hankelFun}

Here we explicitly write down the first Lasserre--Parrilo lift for the  \bTV.
For convenience, 
 a word $x_{i_1}x_{i_2}\cdots x_{i_k}$
will be denoted by $i_1\,i_2\,\ldots\,i_k$ and the corresponding moment by
$Y_{i_1\,i_2\,\ldots\,i_k}$. 

\subsubsection{The $d=0$ relaxation}
\label{sec:hankelFun}
 $\hLift_p(\tvs;0)$.
We first apply the lifting construction to $d=0$, and $p=1-x_1^2-x_2^4$.
Since $\deg p=4$, the lift can be written as
\begin{equation}\label{eq:las0}
\begin{split}
H_2(Y)=
\begin{pmatrix} 
1 & X_1 & X_2 & Y_{11} & Y_{12} & Y_{21} & Y_{22} \\[.1cm]
X_1 & Y_{11} & Y_{12} & Y_{111} & Y_{112} & Y_{121} & Y_{122} \\[.1cm]
X_2 & Y_{21} & Y_{22} & Y_{211} & Y_{212} & Y_{221} & Y_{222} \\[.1cm]
Y_{11} & Y_{111} & Y_{112} & Y_{1111} & Y_{1112} & Y_{1121} & Y_{1122} \\[.1cm]
Y_{21} & Y_{211} & Y_{212} & Y_{2111} & Y_{2112} & Y_{2121} & Y_{2122} \\[.1cm]
Y_{12} & Y_{121} & Y_{122} & Y_{1211} & Y_{1212} & Y_{1221} & Y_{1222} \\[.1cm]
Y_{22} & Y_{221} & Y_{222} & Y_{2211} & Y_{2212} & Y_{2221} & Y_{2222} 
\end{pmatrix}\succeq0,\\
\hs_{p,0}(Y)=1-Y_{11}-Y_{2222}\succeq0.\quad\qquad\quad\qquad
\end{split}
\end{equation}
It is well known (see e.g.~\cite{HN08})  that \eqref{eq:las0} is exact at the scalar level, meaning that 
$(X_1,X_2)\in\cD_p(1)$ if and only if  \eqref{eq:las0} has a solution.

Consider the following cut-down of \eqref{eq:las0}:

\begin{equation}\label{eq:las1}
\begin{split}
\check H_2(Y)=
\begin{pmatrix} 
1 & X_1 & X_2 &  Y_{22} \\[.1cm]
X_1 & Y_{11} & Y_{12} &  Y_{122} \\[.1cm]
X_2 & Y_{21} & Y_{22} & Y_{222} \\[.1cm]
Y_{22} & Y_{221} & Y_{222} & Y_{2222} 
\end{pmatrix}\succeq0, \\
\hs_{p,0}(Y)=1-Y_{11}-Y_{2222}\succeq0.\quad\qquad
\end{split}
\end{equation}
We shall see later that the lifts
given by 
\eqref{eq:las0} and \eqref{eq:las1} are equivalent, and are equivalent to the
standard LMI lift for the TV screen; see Section \ref{sec:ex} 
below for details.

\subsubsection{The $d=1$ relaxation} $\hLift_p(\tvs;1)$.
Here is the next Lasserre--Parrilo relaxation:
\[
\resizebox{.99\hsize}{!}{$
H_3(Y)=
\left(
\begin{smallmatrix}
 1 & X_1 & X_2 & Y_{11} & Y_{12} & Y_{21} & Y_{22} & Y_{111} & Y_{112} & Y_{121} &
   Y_{122} & Y_{211} & Y_{212} & Y_{221} & Y_{222} \\[.2cm]
 X_1 & Y_{11} & Y_{12} & Y_{111} & Y_{112} & Y_{121} & Y_{122} & Y_{1111} & Y_{1112} &
   Y_{1121} & Y_{1122} & Y_{1211} & Y_{1212} & Y_{1221} & Y_{1222} \\[.2cm]
 X_2 & Y_{21} & Y_{22} & Y_{211} & Y_{212} & Y_{221} & Y_{222} & Y_{2111} & Y_{2112} &
   Y_{2121} & Y_{2122} & Y_{2211} & Y_{2212} & Y_{2221} & Y_{2222} \\[.2cm]
 Y_{11} & Y_{111} & Y_{112} & Y_{1111} & Y_{1112} & Y_{1121} & Y_{1122} & Y_{11111} &
   Y_{11112} & Y_{11121} & Y_{11122} & Y_{11211} & Y_{11212} & Y_{11221} & Y_{11222} \\[.2cm]
 Y_{21} & Y_{211} & Y_{212} & Y_{2111} & Y_{2112} & Y_{2121} & Y_{2122} & Y_{21111} &
   Y_{21112} & Y_{21121} & Y_{21122} & Y_{21211} & Y_{21212} & Y_{21221} & Y_{21222} \\[.2cm]
 Y_{12} & Y_{121} & Y_{122} & Y_{1211} & Y_{1212} & Y_{1221} & Y_{1222} & Y_{12111} &
   Y_{12112} & Y_{12121} & Y_{12122} & Y_{12211} & Y_{12212} & Y_{12221} & Y_{12222} \\[.2cm]
 Y_{22} & Y_{221} & Y_{222} & Y_{2211} & Y_{2212} & Y_{2221} & Y_{2222} & Y_{22111} &
   Y_{22112} & Y_{22121} & Y_{22122} & Y_{22211} & Y_{22212} & Y_{22221} & Y_{22222} \\[.2cm]
 Y_{111} & Y_{1111} & Y_{1112} & Y_{11111} & Y_{11112} & Y_{11121} & Y_{11122} &
   Y_{111111} & Y_{111112} & Y_{111121} & Y_{111122} & Y_{111211} & Y_{111212} &
   Y_{111221} & Y_{111222} \\[.2cm]
 Y_{211} & Y_{2111} & Y_{2112} & Y_{21111} & Y_{21112} & Y_{21121} & Y_{21122} &
   Y_{211111} & Y_{211112} & Y_{211121} & Y_{211122} & Y_{211211} & Y_{211212} &
   Y_{211221} & Y_{211222} \\[.2cm]
 Y_{121} & Y_{1211} & Y_{1212} & Y_{12111} & Y_{12112} & Y_{12121} & Y_{12122} &
   Y_{121111} & Y_{121112} & Y_{121121} & Y_{121122} & Y_{121211} & Y_{121212} &
   Y_{121221} & Y_{121222} \\[.2cm]
 Y_{221} & Y_{2211} & Y_{2212} & Y_{22111} & Y_{22112} & Y_{22121} & Y_{22122} &
   Y_{221111} & Y_{221112} & Y_{221121} & Y_{221122} & Y_{221211} & Y_{221212} &
   Y_{221221} & Y_{221222} \\[.2cm]
 Y_{112} & Y_{1121} & Y_{1122} & Y_{11211} & Y_{11212} & Y_{11221} & Y_{11222} &
   Y_{112111} & Y_{112112} & Y_{112121} & Y_{112122} & Y_{112211} & Y_{112212} &
   Y_{112221} & Y_{112222} \\[.2cm]
 Y_{212} & Y_{2121} & Y_{2122} & Y_{21211} & Y_{21212} & Y_{21221} & Y_{21222} &
   Y_{212111} & Y_{212112} & Y_{212121} & Y_{212122} & Y_{212211} & Y_{212212} &
   Y_{212221} & Y_{212222} \\[.2cm]
 Y_{122} & Y_{1221} & Y_{1222} & Y_{12211} & Y_{12212} & Y_{12221} & Y_{12222} &
   Y_{122111} & Y_{122112} & Y_{122121} & Y_{122122} & Y_{122211} & Y_{122212} &
   Y_{122221} & Y_{122222} \\[.2cm]
 Y_{222} & Y_{2221} & Y_{2222} & Y_{22211} & Y_{22212} & Y_{22221} & Y_{22222} &
   Y_{222111} & Y_{222112} & Y_{222121} & Y_{222122} & Y_{222211} & Y_{222212} &
   Y_{222221} & Y_{222222}
\end{smallmatrix}
\right) \succeq0,$
}
\]
$$
\hs_{p,1}(Y)=
\left(
\begin{smallmatrix}
1 -Y_{11}-Y_{2222} & X_1-Y_{111}-Y_{22221} & X_2-Y_{112}-Y_{22222} \\[.2cm]
 X_1-Y_{111}-Y_{12222} & Y_{11}-Y_{1111}-Y_{122221} &
Y_{12}-Y_{1112}-Y_{122222} \\[.2cm]
 X_2-Y_{211}-Y_{22222} & Y_{21}-Y_{2111}-Y_{222221} &
Y_{22}-Y_{2112}-Y_{222222}
\end{smallmatrix}
\right)\succeq0.
$$

\subsection{Proof of Theorem \ref{thm:intersect} }
  The following lemma generalizes Lemma \ref{lem:archtobound}.

\begin{lem}
 \label{lem:archdegbds}\rm
   If $p$ is archimedean, then there is a natural number
   $\nu$ and a positive number $C$ such that if
\[
  Y\in\Lift_p(n;d)
\]
 and $\alpha$ is a word with length $|\alpha|$
  at most $2(d-\nu)$, 
  then 
\[
  \|Y_\alpha\| \le C^{|\alpha|}.
\]
\end{lem}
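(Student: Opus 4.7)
The plan is to mimic the operator-norm estimate $\|Y_\alpha\| \le K^{|\alpha|}$ obtained in Lemma \ref{lem:archtobound} via the GNS construction, but without actually constructing the operator $Z$, since in the truncated setting there need not be one. Instead, the archimedean identity $K^2 - \sum_j x_j^2 = \sum_i s_i^* s_i + \sum_k f_k^* p f_k$ will be applied directly at the level of the Riesz map (Proposition \ref{prop:riesz}). Set $e_1 = \max_i \deg s_i$, $e_2 = \max_k \deg f_k$, and let
$$
\nu := \max\bigl(e_1 - \tfrac12\lceil\deg p\rceil,\; e_2,\; 0\bigr).
$$

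First, I would prove by induction on $|\alpha|$ the claim: if $|\alpha| \le d - \nu$ then $\|Y_{\alpha^*\alpha}\| \le K^{2|\alpha|}$. The base case is trivial since $Y_\emptyset = I$. For the step, write $\alpha = x_j\alpha'$ with $|\alpha'| \le d-\nu$, and apply the Riesz map $\Phi_Y^1$ to the archimedean identity multiplied on both sides by $\alpha'$:
$$
K^2\, Y_{\alpha'^*\alpha'} - \sum_j Y_{\alpha'^* x_j^2 \alpha'}
=\sum_i \Phi_Y^1\bigl((s_i\alpha')^*(s_i\alpha')\bigr)
+\sum_k \Phi_Y^1\bigl((f_k\alpha')^*\,p\,(f_k\alpha')\bigr).
$$
By the choice of $\nu$, we have $\deg(s_i\alpha') \le d + \lceil\tfrac12\deg p\rceil$ and $\deg(f_k\alpha') \le d$, so by parts (2) and (4) of Proposition \ref{prop:riesz} the right-hand side is positive semidefinite. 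Each summand $Y_{(x_j\alpha')^*(x_j\alpha')}$ on the left is itself a diagonal block of $H(Y) \succeq 0$, hence positive semidefinite. Taking norms yields
$$
\bigl\|Y_{(x_j\alpha')^*(x_j\alpha')}\bigr\| \le K^2\, \|Y_{\alpha'^*\alpha'}\| \le K^{2|\alpha|},
$$
completing the induction.

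Finally, for an arbitrary word $\alpha$ with $|\alpha| \le 2(d-\nu)$, split $\alpha = \beta^*\gamma$ in the middle so that $|\beta|, |\gamma| \le d-\nu$. The $2\times 2$ principal block of $H(Y)$ indexed by $\beta$ and $\gamma$,
$$
\begin{pmatrix} Y_{\beta^*\beta} & Y_{\beta^*\gamma} \\ Y_{\gamma^*\beta} & Y_{\gamma^*\gamma}\end{pmatrix} \succeq 0,
$$
yields the operator Cauchy--Schwarz bound $\|Y_{\beta^*\gamma}\| \le \|Y_{\beta^*\beta}\|^{1/2}\|Y_{\gamma^*\gamma}\|^{1/2}$, so by the inductive claim,
$$
\|Y_\alpha\| \le K^{|\beta|+|\gamma|} = K^{|\alpha|},
$$
and we may take $C = K$. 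The main obstacle is purely the bookkeeping on degrees: one must verify that the polynomials $s_i\alpha'$ and $f_k\alpha'$ appearing after the archimedean substitution still fit under the truncation thresholds governing positivity of $H_{d+\lceil\deg p/2\rceil}(Y)$ and $\hs_{p,d}(Y)$, which is precisely what the choice of $\nu$ enforces.
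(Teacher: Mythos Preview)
Your proof is correct and follows essentially the same route as the paper's: use the archimedean certificate to obtain the recursive operator inequality $Y_{(x_j\alpha')^*(x_j\alpha')} \preceq K^2\,Y_{\alpha'^*\alpha'}$, induct to bound $Y_{\alpha^*\alpha}$, then use the $2\times 2$ principal block of the truncated Hankel matrix for arbitrary words. The only cosmetic differences are that the paper works with separate certificates for each $C^2 - x_j^2$ and phrases the positivity via localizing matrices $\hs_{q_j,d-\nu}(Y)\succeq 0$, whereas you apply the single identity $K^2-\sum_j x_j^2$ directly through the Riesz map; and your formula for $\nu$ has a typo (you want $\lceil\tfrac12\deg p\rceil$, not $\tfrac12\lceil\deg p\rceil$).
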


  The proof of this lemma
  uses the following variant of the Gelfand-Naimark-Segal (GNS) construction. A 
  Hankel matrix 
$  Y\in\Lift_p(n;d)$ with a truncated positive semidefiniteness property
  generates a pre-Hilbert space as follows.
  Assuming that $H_{d + \left\lceil\frac12\deg p\right\rceil}(Y)$ and $ \hs_{p,d}(Y)$ are  positive semidefinite,  define the sesquilinear 
  form on $\FF^n \otimes \FF\langle x\rangle_{d+\dpovertwo}$ by
\[
  \langle h\otimes \alpha, k\otimes \beta \rangle = \langle Y_{\beta^*\alpha}h,k\rangle.
\]
  Positivity of $\hs_{p,d}(Y)$ is then equivalent to the condition,
\[
   \left\langle \sum h_\alpha \otimes p\alpha, \sum h_\beta \otimes \beta \right\rangle \ge 0
\]
 for all $h\in\FF^n \otimes \FF\langle x\rangle_{d+\dpovertwo}$ of the form
\[
  h = \sum_{|\alpha|\le d} h_\alpha \otimes \alpha. 
\]
 In particular, if $f$ is a polynomial of degree $\nu$ and
\[
  q = f^* p f,
\]
 then for $h=\sum_{|\alpha|\le d-\nu} h_\alpha \otimes \alpha$,
\[
  \big\langle h_\alpha \otimes f^* p f \alpha,\sum_\beta h_\beta \otimes \beta\big\rangle
   = \big\langle h_\alpha \otimes pf \alpha, \sum_\beta h_\beta \otimes f\beta \big\rangle \ge 0.
\]
 Hence the localizing matrix $\hs_{q,d-\nu}(Y)$ is positive semidefinite. 
 An analogous statement is true for a polynomial $q=s^*s$ when
 $s$ has degree at most $\nu+\dpovertwo$. 

\begin{proof}[Proof of Lemma {\rm{\ref{lem:archdegbds}}}]
  By the archimedean hypothesis, there exist a constant $C$
  and a natural number $\mu$ such that for each $j$ 
 there exist polynomials $s_{j,1},\dots,s_{j,\mu}$
   and $f_{j,1},\dots,f_{j,\mu}$ with
\begin{equation}\label{eq:archCert}
 q_j= C^2-x_j^2 = \sum_k s_{j,k}^* s_{j,k} + \sum_{\ell} f_{j,\ell}^* p f_{j,\ell}.
\end{equation}
  Choose $\nu$ such that $\deg(f_{j,\ell}) \le \nu$ for all $j,\ell$ and
  $\deg(s_{j,k})\le \nu +\dpovertwo$ for all $j,k$. 
  Fixing $j$ and letting $S_k = s_{j,k}^*s_{j,k}$ and $F_k=f_{j,\ell}^* pf_{j,\ell}$ 
  it follows that 
\[
  \hs_{q_j,d-\nu}(Y) = \sum \hs_{S_k,d-\nu}(Y) + \sum \hs_{F_\ell,d-\nu}(Y).
\]
  Hence, by the discussion above, the localizing matrix $\hs_{q_j,d-\nu}(Y)$ is
  positive semidefinite.

  Given a $\beta$ with $|\beta|< d-\nu$, positivity
  of the localizing matrix for $C^2-x_j^2$ implies that
\[
  C^2 Y_{\beta^* \beta} \succeq Y_{x_j^*\beta^*\beta x_j}.
\]
  Thus an induction argument on $|\beta|$ gives, for any $|\beta|\le d-\nu$
  that
\[
   Y_{\beta^* \beta} \preceq C^{2|\beta|} I.
\]
  Now suppose $\alpha$ is a word with $|\alpha|\le 2(d-\nu)$.
  There exist words $\beta$ and $\gamma$ of length
  at most $d-\nu$  such that $\alpha = \beta^*\gamma$. From
  the fact that  $H(Y)\succeq0$, it follows that
\[
 \begin{pmatrix}  Y_{\beta^* \beta} & Y_{\beta^* \gamma} \\ Y_{\gamma^* \beta} & Y_{\gamma^* \gamma} \end{pmatrix}
\]
 is positive semidefinite. Thus,
\[
  Y_{\gamma^*\beta} Y_{\beta^*\gamma} \preceq C^{2\big(|\beta|+ |\gamma|\big)} I.
\]
  The desired inequality follows. 
\end{proof}

\begin{proof}[Proof of Theorem \rm{\ref{thm:intersect}}]
It is obvious that 
\[
 \bigcap_d \Lift_p(n;d) = \Lift_p(n)
\]
in the sense that a moment sequence $(Y_\al)_\al$ all of whose truncations
satisfy the positive semidefiniteness of the Hankel matrices 
$H_{d + \left\lceil\frac12\deg p\right\rceil} (Y )$, and
$ \hs_{p,d}(Y(n))  $, is in $\Lift_p(n)$, i.e., makes the infinite Hankel matrices
$H(Y)$ and $\hs_p(Y)$ positive semidefinite.

  Suppose the moment sequence $Z\in \bigcap_d\hLift_p(n;d)$.
  In this case, for each $d$ there is a (truncated) moment sequence
  $Y^{(d)} = (Y^{(d)}_\alpha)\in \hLift_p(n;d)$ such that
 \[
   (Y^{(d)}_{x_1},\dots,Y^{(d)}_{x_g}) = Z.
\]
  By construction, for each $\alpha$ 
  the sequence $(Y^{(d)}_\alpha)_{2d\ge |\alpha|+\deg(p)}$ is bounded. 
  Since we have countably many such 
  sequences, there is a subsequence $(d_k)$ with the property that
  $(Y^{(d_k)}_\alpha)$ converges termwise (in $\alpha$ with $k$ tending to $\infty$).
  This limit moment sequence $Y$ will be in $\Lift_p(n)$ and moreover,
\[
 Z = (Y_{x_1},\dots,Y_{x_g}) = \hat{Y}
\]
 so that $Z\in \hLift_p(n)$. 
\end{proof}

\subsection{Truncated Quadratic Modules and the  BPCP}

 Given $\alpha,\beta,\mu\in\N$, and an $\ell\times\ell$ free matrix polynomial $p$, set 
\begin{equation}
 \label{eq:Malbeta}
  M_{\al, \beta}^{\mu}(p):=
   \Sigma_{\al}^{\mu}  +\Big\{ \sum_i^{\rm finite}  f_i^* pf_i :
   \  f_i \in \FF^{\ell\times \mu}\ax_\beta \Big\}
  \  \subseteq \ \FF^{\mu\times \mu}\ax_{\max \{2\al, 2\beta +a\}},
\end{equation}
 where $a=\deg(p)$ and $\Sigma_\al^\mu$ denotes all $\mu\times\mu$ sums
of squares of degree $\leq2\al$.
Obviously, if $f\in M_{\al,\beta}^{\mu}(p)$ then $f|_{\cD_p}\succeq0$.
We call $M_{\al,\beta}^\mu(p)$ the \df{truncated quadratic module} defined by $p$.
For notational convenience, we write $M_{k}$ for  $M_{\al,\be}$ with 
$k=\max\{2\al,2\be+a\}$.
We also introduce
\[
 M^\mu(p):=\bigcup_{\al,\be} M_{\al,\beta}^\mu(p),
\]
the \df{quadratic module} defined by $p$. 
If $\mu=1$ we shall often omit the superscript $\mu$.
Observe that $p$ is archimedean if 
the convex cone $M^\mu(p)$ has an order unit, i.e., for all symmetric $\mu\times\mu$ matrix
polynomials $f$ there is $N\in\N$ with $N-f\in M^\mu(p)$. (This notion is 
easily seen to be 
independent of $\mu$, cf.~\cite[\S 6]{HKM+}.)

\begin{defn}
Let $\mu,N\in\N$. We say that $p$ has the \df{$(N,\mu)$-bound positivity certificate property  (BPCP)},
if for every $\mu\times\mu$ linear pencil $L$, we have
\[
L|_{\cD_p}\succeq0 \quad\iff\quad L \in M_{N}^\mu(p).
\]
If $N$ can be chosen  independently of $\mu$, then we say $p$ has the 
\df{$N$-BPCP}.
\end{defn}

We refer the reader to \cite{Scw04,Nis07} for the classical commutative study
of degree bounds needed in Positivstellensatz certificates.

\subsubsection{A sufficient stopping criterion for the free Lasserre--Parrilo lift}

\begin{lem}
 \label{lem:hLiftclosed}\rm
  If, for a positive integer $n$, 
  the set $\Lift_p(n)$ is bounded in the sense that 
  for each $\alpha$ there exists a $C_\alpha$ such that
   $\|Y_\alpha \| \le C_\alpha$ for all $Y\in\Lift_p(n)$,
   then $\hLift_p(n)$ is compact. 
\end{lem}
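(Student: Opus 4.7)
The plan is to exhibit $\hLift_p(n)$ as the continuous image of a compact set. Concretely, I would topologize the space of moment sequences $(Y_\alpha)_{\alpha \in \ax}$ with $Y_\alpha \in \FF^{n\times n}$ by the product topology (one factor for each word $\alpha$), and show that $\Lift_p(n)$ is compact in this topology. Then the projection $Y \mapsto \hY = (Y_{x_1},\dots,Y_{x_g})$, being coordinate projection onto finitely many entries, is continuous, and so $\hLift_p(n)$ will be compact in the usual (finite-dimensional, Hausdorff) topology on $\smatng$.

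To see compactness of $\Lift_p(n)$, first use the hypothesis to embed
\[
\Lift_p(n) \subseteq \prod_{\alpha \in \ax} \overline{B}_{C_\alpha},
\]
where $\overline{B}_{C_\alpha}$ is the closed ball of radius $C_\alpha$ in $\FF^{n\times n}$. Since $\ax$ is countable and each $\overline{B}_{C_\alpha}$ is a compact metric space, the product is compact (and metrizable) by Tychonoff. It therefore suffices to show $\Lift_p(n)$ is closed in this product. The defining conditions of $\Lift_p(n)$ are: (i) $Y_\emptyset = I$; (ii) $Y_{\alpha^*} = Y_\alpha^*$ for every $\alpha$; (iii) $H(Y) \succeq 0$; and (iv) $\hs_p(Y) \succeq 0$. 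Conditions (i) and (ii) are coordinatewise equalities, hence closed. Condition (iii) is equivalent to $H_d(Y) \succeq 0$ for every $d$, and each such finite block matrix depends continuously on only finitely many coordinates $Y_\beta$ with $|\beta| \leq 2d$; since positive semidefiniteness of a matrix is a closed condition, (iii) is an intersection of closed sets and therefore closed. The same argument applied to the truncated localizing matrices $\hs_{p,d}(Y)$ handles (iv).

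Thus $\Lift_p(n)$ is a closed subset of a compact space, hence compact. The map $\pi: (Y_\alpha)_\alpha \mapsto (Y_{x_1},\dots,Y_{x_g})$ is continuous since each coordinate projection is continuous for the product topology. Therefore $\hLift_p(n) = \pi(\Lift_p(n))$ is a continuous image of a compact set in the Hausdorff space $\smatng$, hence compact.

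The only subtlety worth checking carefully is that positive semidefiniteness of an infinite Hankel matrix really does reduce to positive semidefiniteness of all its finite principal truncations — but this is immediate from the definition, since $H(Y) \succeq 0$ means $\langle H(Y) v, v \rangle \geq 0$ for all finitely supported vectors $v$, and any such inequality involves only finitely many entries of $Y$. No serious obstacle arises; the lemma is essentially a Tychonoff plus continuous-image argument once one observes that the bound hypothesis exactly matches what is needed to sit inside a compact product.
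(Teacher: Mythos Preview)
Your proof is correct and follows essentially the same approach as the paper: embed $\Lift_p(n)$ in the product $\prod_\alpha \smatn$, use Tychonoff together with closedness of $\Lift_p(n)$ to obtain compactness, and then project. In fact you supply more detail than the paper does, which simply asserts that $\Lift_p(n)$ is ``entrywise closed'' without spelling out, as you do, that the infinite Hankel and localizing positivity conditions reduce to their finite truncations.
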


\begin{proof}
  With the boundedness hypothesis, the set $\Lift_p(n)$, viewed
  as a subset of the product space $\prod_{\alpha} \smatn$ 
  is entrywise bounded.  It is also seen to be entrywise 
  closed. Thus it is a product of compact sets and therefore
  compact.  Consequently the projection $Y\mapsto \hY$
  being the finite product of the projections determined by
  the $x_j$ has compact range; i.e., $\hLift_p(n)$ is compact.
\end{proof}

The next theorem says if  $N$-BPCP holds, then one of the truncated
Lasserre--Parrilo lifts gives exactly the free convex hull of $\cD_p$.

\begin{thm} \label{thm:BPCPliftStops}
If $\cD_p$ is uniformly bounded, and $p$ has the $N$-BPCP, 
then \[\cmco \cD_p=\hat \Lift_p\Big(\tvs; \left\lceil \frac N2\right\rceil\Big).\]
\end{thm}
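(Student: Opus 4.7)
The plan is to prove $\cmco\cD_p=\hLift_p(\tvs;d)$ with $d:=\lceil N/2\rceil$ by establishing the two inclusions separately: the forward direction will combine the $N$-BPCP with the Riesz-map machinery of Proposition \ref{prop:riesz} and the matricial Hahn--Banach theorem (Corollary \ref{cor:hb}); the reverse direction will be a compactness argument relying only on the uniform boundedness of $\cD_p$.

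For $\hLift_p(\tvs;d)\subseteq\cmco\cD_p$, fix $n$ and $Z=\hat Y\in\hLift_p(n;d)$ with $Y\in\Lift_p(n;d)$. Let $L$ be any $n\times n$ monic linear pencil with $L|_{\cD_p}\succeq0$. Applying the $N$-BPCP with $\mu=n$ produces a representation $L=\sigma+\sum_i f_i^*pf_i\in M_N^n(p)$, where $\sigma=\sum_k g_k^*g_k$ with $\deg g_k\leq\alpha$, $f_i\in\FF^{\ell\times n}\ax_\beta$, and $\max\{2\alpha,2\beta+\deg p\}\leq N$; in particular $\alpha,\beta\leq\lceil N/2\rceil=d$. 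Applying the Riesz map $\Phi_Y^n$, the conditions $H_{d+\lceil\deg p/2\rceil}(Y)\succeq0$ and $\hs_{p,d}(Y)\succeq0$ together with Proposition \ref{prop:riesz}(2) and (4) give $\Phi_Y^n(\sigma)\succeq0$ and $\Phi_Y^n(\sum_i f_i^*pf_i)\succeq0$; hence $L(Z)=\Phi_Y^n(L)\succeq0$. As $L$ was arbitrary, Corollary \ref{cor:hb} yields $Z\in\cmco\cD_p(n)$.

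For $\cmco\cD_p\subseteq\hLift_p(\tvs;d)$, fix $Z\in\cmco\cD_p(n)$ and take a sequence $Z^{(k)}\in\mco\cD_p(n)$ with $Z^{(k)}\to Z$. By Proposition \ref{prop:semialghull}, write $Z^{(k)}=V_k^*W^{(k)}V_k$ with $V_k\colon\FF^n\to\FF^{m_k}$ an isometry and $W^{(k)}\in\cD_p(m_k)$. Uniform boundedness of $\cD_p$ supplies $C>0$ with $\|W^{(k)}_j\|\leq\sqrt{C}$ for all $j,k$. The moment sequences $Y^{(k)}_\alpha:=V_k^*(W^{(k)})^\alpha V_k$ belong to $\Lift_p(n;d)$ (as in the proof of Theorem \ref{thm:true}), have $\hat Y^{(k)}=Z^{(k)}$, and satisfy the entrywise bound $\|Y^{(k)}_\alpha\|\leq C^{|\alpha|/2}$. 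Since there are only finitely many $\alpha$ with $|\alpha|\leq 2d+\deg p+1$, a diagonal extraction yields a subsequential limit $Y$ with $Y_\alpha=\lim_k Y^{(k)}_\alpha$; closedness of the PSD cone preserves the defining positivity conditions of $\Lift_p(n;d)$, so $Y\in\Lift_p(n;d)$ and $\hat Y=Z$, giving $Z\in\hLift_p(n;d)$.

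The key technical point---and the only essential use of the $N$-BPCP---is the degree matching in the forward direction: the choice $d=\lceil N/2\rceil$ is exactly the threshold at which a certificate in $M_N^n(p)$ is consumable by the Riesz-map positivity guaranteed by $\Lift_p(n;d)$. The reverse direction is essentially automatic; it uses only the uniform bound on $\cD_p$ to control every truncated moment uniformly and neatly bypasses any question of whether $\hLift_p(\tvs;d)$ is closed as a stand-alone set.
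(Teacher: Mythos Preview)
Your proof is correct. For the inclusion $\hLift_p(\tvs;d)\subseteq\cmco\cD_p$ your argument coincides with the paper's: you use the $N$-BPCP together with the Riesz map (Proposition~\ref{prop:riesz}) and a separating-pencil characterization, the only cosmetic difference being that you argue directly via Corollary~\ref{cor:hb} while the paper phrases the same step as a contradiction using Theorem~\ref{prop:sharp}.

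The inclusion $\cmco\cD_p\subseteq\hLift_p(\tvs;d)$ is handled differently. The paper first observes that uniform boundedness of $\cD_p$ together with the $N$-BPCP forces $p$ to be archimedean, then invokes Lemma~\ref{lem:archdegbds} to obtain uniform bounds on \emph{all} truncated moment sequences in $\Lift_p(\tvs;d)$; this makes $\Lift_p(\tvs;d)$ compact and hence $\hLift_p(\tvs;d)$ closed, so it contains the closure $\cmco\cD_p$. You instead work only with moment sequences that actually come from tuples in $\cD_p$ via isometric compression, bound them directly from the uniform bound on $\cD_p$, and pass to a subsequential limit. Your route is more elementary: it avoids the archimedean detour and Lemma~\ref{lem:archdegbds} entirely. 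The paper's route buys a bit more, since it establishes along the way that $\hLift_p(\tvs;d)$ itself is closed, not merely that it contains the particular limits you need.
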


\begin{proof}
Let $\eta= \lceil \frac N2\rceil$. 
Clearly, $\cD_p\subseteq\hLift_p(\tvs;\eta)$, and since $\hLift_p(\tvs;\eta)$ is matrix convex, 
$\mco \cD_p\subseteq\hLift_p(\tvs;\eta)$. 
Since $\cD_p$ is uniformly bounded and $p$ has the $N$-BPCP,
$p$ is archimedean. Hence by Lemma \ref{lem:archdegbds},
$\Lift_p(\tvs;\eta)$ is compact (e.g.~in the product topology), and hence
$\cmco \cD_p\subseteq\ol{\hLift_p(\tvs;\eta)}=\hLift_p(\tvs;\eta)$.

Now assume $Y\in\hLift_p(\tvs;\eta)\setminus\cmco \cD_p$,
and choose $W\in\Lift_p(\tvs;\eta)$ satisfying $\hat W=Y$.
 Suppose $Y$ is a $g$-tuple of size
$\mu\times\mu$ matrices.
By the Hahn-Banach Theorem \ref{prop:sharp} there 
is a linear pencil $L$ (of size  $\mu$) with $L|_{\mco \cD_p}\succeq0$ and $L(Y)\not\succeq0$. 
By the $N$-BPCP property for $p$, 
we have that $L \in M_{N}^\mu(p)$, i.e.,  
\begin{equation}\label{eq:symbol}
L= \sum_k h_k^*h_k + \sum_{i=1}^r  f_i^* pf_i .
\end{equation}
Here $\deg(h_k)\leq \lfloor \frac N 2\rfloor$  and $2\deg (f_i)+\deg(p)\leq N$ for $i=1,\ldots,r$.
Now apply the Riesz map $\Phi_W^\mu$ to \eqref{eq:symbol}:
\beq\label{eq:las2}
\Phi_W^\mu(L) = \sum_k \Phi_W^\mu(h_k^*h_k) + \sum_{i=1}^r \Phi_W^\mu( f_i^* pf_i ).
\eeq
Since $H_\eta(W)\succeq0$ and $\hs_{p,\eta}(W)\succeq0$, Proposition \ref{prop:riesz}
implies the right hand side of \eqref{eq:las2} is positive semidefinite.
On the other hand, since $L$ is linear, 
$\Phi_W^\mu(L)= L(\hat W)=L(Y)\not\succeq0$, a contradiction.
\end{proof}

\subsubsection{More on the Positivstellensatz}\label{subsec:LPP}

 The polynomial $p$ has the \df{linear Positivstellensatz property}
 (\df{LPP}) 
  if whenever $L$ is a monic linear pencil 
  positive semidefinite
on $\cD_p$, then for each $\epsilon>0$ there
  exists natural numbers $n_s$ and $n_f$ and matrix polynomials
  $s_1,\dots,s_{n_s}$ and $f_1,\dots,f_{n_f}$ such that
\[
  L+\epsilon =\sum_{j=1}^{n_s} s_j^* s_j  + \sum_{j=1}^{n_f} f_j^* p f_j.
\]
(So $L+\epsilon\in M^\mu(p)$.) 
  Note that the LPP condition is weaker than the BPCP.

\begin{prop}
 \label{prop:ifposstatz}
   Suppose 
$\cD_p$ is uniformly bounded and $p$ has the LPP. Then $\cmco \cD_p = \hLift_p$.
\end{prop}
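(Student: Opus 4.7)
The plan is to prove the two inclusions in $\cmco\cD_p = \hLift_p$ separately, invoking LPP once in each direction. For $\hLift_p\subseteq\cmco\cD_p$, fix $X=\hat Y\in\hLift_p(n)$; by Corollary~\ref{cor:hb} it suffices to show $L(X)\succeq 0$ for every $n\times n$ monic pencil $L$ with $L|_{\cD_p}\succeq 0$. Given such $L$ and any $\epsilon>0$, LPP yields a decomposition $L+\epsilon = \sum_j s_j^* s_j + \sum_j f_j^* p f_j$. Apply the Riesz map $\Phi_Y^n$: linearity sends the left side to $L(X)+\epsilon I$, while Proposition~\ref{prop:riesz}(1) and~(3) (with $t=1$) make each summand on the right positive semidefinite. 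Letting $\epsilon\to 0$ gives $L(X)\succeq 0$.

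For the reverse inclusion, observe that $\cD_p\subseteq\hLift_p$ via $Y_\alpha:=X^\alpha$, and that $\hLift_p$ is matrix convex, since direct sums and isometric compressions of moment sequences preserve positive semidefiniteness of $H(Y)$ and $\hs_p(Y)$. Thus $\mco\cD_p\subseteq\hLift_p$, and the remaining task is to show $\hLift_p(n)$ is closed, for then $\cmco\cD_p=\overline{\mco\cD_p}\subseteq\overline{\hLift_p}=\hLift_p$. This closedness will follow from a uniform moment estimate $\|Y_\alpha\|\le K^{|\alpha|}$ valid for every $Y\in\Lift_p(n)$: such a bound forces $\Lift_p(n)$ into a Tychonoff product of compact balls where, being cut out by closed PSD conditions, it is itself compact, whence its continuous image $\hLift_p(n)$ under $Y\mapsto\hat Y$ is compact as well.

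To secure the moment bound, let $K$ satisfy $\cD_p\subseteq\{\|X\|^2\le K^2\}$ and form the monic $(g+1)\times(g+1)$ Schur-complement pencil
\[
L_K(x)=\begin{pmatrix} 1 & x_1/K & \cdots & x_g/K \\ x_1/K & 1 & & \\ \vdots & & \ddots & \\ x_g/K & & & 1\end{pmatrix},
\]
whose positivity is equivalent to $K^2-\sum_j x_j^2\succeq 0$; hence $L_K|_{\cD_p}\succeq 0$ and LPP gives $L_K+\epsilon = \sum_j s_j^* s_j + \sum_j f_j^* p f_j$. For $Y\in\Lift_p(n)$, apply the GNS-type construction of Lemma~\ref{lem:basiclasserre} to produce self-adjoint operators $Z_1,\dots,Z_g$ on a pre-Hilbert space $\cP$ with $p(Z)\succeq 0$ and an isometry $V:\FF^n\to\cP$ with $Y_\alpha = V^*Z^\alpha V$. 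Evaluating the LPP identity at $Z$ turns it into an operator inequality $L_K(Z)+\epsilon I\succeq 0$ on $\cP^{g+1}$, since the right-hand side is manifestly positive semidefinite. Testing against vectors of the form $(v_0,-Z_1 v_0/K,\dots,-Z_g v_0/K)$ and sending $\epsilon\to 0$ collapses the Schur complement to $\sum_j\|Z_j v_0\|^2\le K^2\|v_0\|^2$, so $\|Z_j\|\le K$ and $\|Y_\alpha\|\le K^{|\alpha|}$ as required.

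The main obstacle is precisely this last step: extracting an operator-norm bound on the (potentially unbounded) GNS operators $Z_j$ from a pencil-level Positivstellensatz. The trick is twofold: encoding the uniform boundedness hypothesis on $\cD_p$ as a single monic linear pencil via the Schur complement, so that LPP can actually see it, and then choosing a test vector that collapses that pencil back into a scalar operator inequality on $\cP$. Once the moment bound is in place, compactness closes the loop and the reverse inclusion drops out from the first direction.
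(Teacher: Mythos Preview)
Your proof is correct, but it proceeds along a somewhat different route than the paper's.

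The paper dispatches both inclusions much more quickly by first observing that uniform boundedness of $\cD_p$ together with LPP forces $p$ to be archimedean (a one-line remark). Once archimedeanity is in hand, Theorem~\ref{thm:infinitelasserre} immediately gives $\cmco\cD_p\subseteq\mcop=\hLift_p$ for the reverse inclusion; and for $\hLift_p\subseteq\cmco\cD_p$, Lemma~\ref{lem:basiclasserre}(c) produces a bounded tuple $Z\in\cO_p$ with $X=V^*ZV$, whereupon evaluating the LPP identity at $Z$ and invoking Corollary~\ref{cor:hb} finishes.

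Your argument is more self-contained. For $\hLift_p\subseteq\cmco\cD_p$ you apply the Riesz map $\Phi_Y^n$ and Proposition~\ref{prop:riesz} directly to the LPP decomposition, which is slicker than the paper's detour through the GNS operators $Z$: you never need to know $Z$ is bounded. For the reverse inclusion, rather than citing Theorem~\ref{thm:infinitelasserre}, you re-derive exactly the moment bound $\|Y_\alpha\|\le K^{|\alpha|}$ that underlies it. Your Schur-complement pencil $L_K$ is a nice device: it packages the quadratic boundedness constraint $K^2-\sum x_j^2\succeq0$ into a \emph{monic linear} pencil so that LPP can be invoked, and your test vector $(v_0,-Z_1v_0/K,\dots,-Z_gv_0/K)$ recovers the scalar inequality $\sum\|Z_jv_0\|^2\le K^2\|v_0\|^2$ from $L_K(Z)+\epsilon I\succeq0$. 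In effect you have given a direct proof that LPP plus uniform boundedness yields the bounds of Lemma~\ref{lem:archtobound} (equivalently, archimedeanity) without naming it as such. The paper's approach is shorter because it leans on the machinery already built; yours exposes more of what is actually happening.
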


\begin{proof}
Observe that the uniform boundedness of $\cD_p$ together with the LPP implies
$p$ is archimedean.
 Suppose $L$ is positive semidefinite on $\cD_p$. By the
  LPP, 
\begin{equation}
 \label{eq:LPPrep}
 \epsilon + L  = \sum s_j^* s_j  + \sum f_j^* p f_j.
\end{equation}
 On the other hand, if $X\in \hLift_p$, then by 
   Lemma \ref{lem:basiclasserre}
    there exists a $Z\in\cO_p$ and an isometry
  $V$ such that $X=V^*ZV$.   Because
  $Z\in\cO_p$, it follows from the representation \eqref{eq:LPPrep}, that  $\epsilon +L(Z) \succeq 0$. 
 Since $\eps>0$ was arbitrary, this shows $L(Z)\succeq 0$. 
  Hence by Corollary \ref{cor:hb}, $X\in \cmco \cD_p$.
\end{proof}

\section{Examples}\label{sec:ex}

In this section we present a few examples, starting with a detailed study of the TV screen and
its ``classical'' spectrahedral lifts, see Subsections \ref{subsec:71} and \ref{subsec:72}. We show that, unlike in the commutative settings, 
the first Lasserre--Parrilo lift is not exact. Then in Subsection \ref{subsec:73} we prove
that the matrix convex hull of the TV screen is dense in its operator convex hull.
Finally, Subsection \ref{subsec:74} contains  simple examples where the Lasserre--Parrilo
lifts are exact.

\subsection{The  Bent TV Screen}\label{subsec:71}

Recall the \bTV, 
\[
p=1-x^2-y^4.
\]
The corresponding free semialgebraic set $\cD_p$ is called the
 \df{TV screen}.

\begin{lem}
\label{lem:5/4}
$p$ is $\frac54$-archimedean.
\end{lem}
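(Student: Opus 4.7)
The plan is to produce an explicit archimedean certificate of the form
\[
\left(\tfrac{5}{4}\right)^2 - x^2 - y^2 \;=\; \sum_j s_j^* s_j + \sum_j f_j^* p f_j
\]
for appropriate (commuting, in this single-variable-at-a-time computation) polynomials $s_j$ and $f_j$. Since $p = 1 - x^2 - y^4$, the natural first move is to use $p$ itself to absorb the $x^2$ term: setting $f_1 = 1$ substitutes $1 - p - y^4$ for $x^2$, leaving the scalar $y$-only expression
\[
\tfrac{25}{16} - x^2 - y^2 - p \;=\; \tfrac{25}{16} - 1 + y^4 - y^2 \;=\; \tfrac{9}{16} + y^4 - y^2.
\]

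The remaining step is to write $\tfrac{9}{16} + y^4 - y^2$ as a sum of squares in the single variable $y$. Completing the square via $(y^2 - \tfrac12)^2 = y^4 - y^2 + \tfrac14$ gives
\[
\tfrac{9}{16} + y^4 - y^2 \;=\; \left(y^2 - \tfrac12\right)^2 + \tfrac{5}{16},
\]
and $\tfrac{5}{16} = \bigl(\tfrac{\sqrt 5}{4}\bigr)^2$ is trivially a square. Combining, one obtains the certificate
\[
\tfrac{25}{16} - x^2 - y^2 \;=\; \left(\tfrac{\sqrt 5}{4}\right)^2 + \left(y^2 - \tfrac12\right)^2 + 1\cdot p\cdot 1,
\]
which is exactly the form required by \eqref{eq:archimed} with $K = \tfrac54$, $s_1 = \tfrac{\sqrt 5}{4}$, $s_2 = y^2 - \tfrac12$, and $f_1 = 1$.

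There is essentially no obstacle here beyond finding the right constants; the whole proof is a one-line identity verified by direct expansion. The only thing worth remarking on is that the construction is sharp for the ansatz with $f_1 = 1$: the residue $\tfrac{9}{16} + y^4 - y^2$ attains its minimum $\tfrac{5}{16}$ at $y^2 = \tfrac12$, so no smaller $K$ is attainable with this particular choice of $f$'s. Hence the write-up can be kept to a single displayed identity followed by the remark that it witnesses $\tfrac54$-archimedeanity in the sense of \eqref{eq:archimed}.
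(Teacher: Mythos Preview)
Your proof is correct and follows essentially the same route as the paper: both use $f_1=1$ and the key square $(y^2-\tfrac12)^2$. The paper's displayed identity is actually the slightly tighter
\[
\tfrac54 - x^2 - y^2 \;=\; \bigl(y^2-\tfrac12\bigr)^2 + p,
\]
i.e.\ with $\tfrac54$ rather than $\bigl(\tfrac54\bigr)^2$ on the left and no leftover constant square; this already places $\tfrac54 - x^2 - y^2$ in the quadratic module (hence a fortiori $\tfrac{25}{16} - x^2 - y^2$), so your extra term $\tfrac{5}{16}$ is harmless but unnecessary, and your sharpness remark for the ansatz $f_1=1$ is off by that same constant.
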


\begin{proof}
Simply note that
\[
\frac54-x^2-y^2= \left(y^2-\frac12\right)^2+ (1-x^2-y^4).
\qedhere
\]
\end{proof}

The usual lift of $\cD_p(1)=\conv\big(\cD_p(1)\big)$ is given by $\cD_\La(1)$, where
\[
\La= \begin{pmatrix} 
 1 & 0 &   x \\
 0 & 1 & w \\
x & w & 1
 \end{pmatrix}
 \oplus
 \begin{pmatrix}
 1 &    y \\
 y  & w
 \end{pmatrix}.
\]
However, $\La$ is not monic, so we modify the construction somewhat.
Let
\[
L_1(x,y,w)=\begin{pmatrix}
 1 &  \gamma  y \\
 \gamma y  & w+\alpha
 \end{pmatrix}
 , \quad
 L_2(x,y,w)=
 \begin{pmatrix}
 1 & 0 &  \gamma ^2 x \\
 0 & 1 & w \\
  \gamma ^2 x & w & 1-2  \alpha w
 \end{pmatrix}
 \]
 where $\al>0$ and $1+\al^2=\ga^4$, and set $L=L_1\oplus L_2$. 
 While strictly speaking $L$ is not monic, the free spectrahedron $\cD_L$ contains $0$ in its interior, so $L$ can be
easily modified to become monic.
 It is  worth noting that 
\begin{equation}\label{eq:standardTVlift}
\La(X,Y,W)\succeq0 \qquad\iff\qquad W\succeq Y^2 \;\text{   and   }\; 1-X^2-W^2\succeq0
\end{equation}
as is easily seen by using Schur complements.
 
Let $\cC$ denote the free spectrahedrop obtained as the projection of $\cD_L$ onto the first two coordinates. Thus,
\begin{equation}\label{eq:spectrahedrop}
\cC=\{(X,Y)\in\mbS^2:\exists W\in\mbS \text{ such that } L(X,Y,W)\succeq0\}.
\end{equation}
It is  easy  to see
$
\cC=\{(X,Y)\in\mbS^2:\exists W\in\mbS \text{ such that }  \La(X,Y,W)\succeq0\}.$

The main result of this section is:

\begin{thm}\label{thm:TV}
$\cmco\cD_p=\mcop\subsetneq\cC$.
\end{thm}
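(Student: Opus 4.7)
The theorem has three essentially separate assertions, which I would prove as (a) $\cmco\cD_p = \mcop$, (b) $\mcop \subseteq \cC$, and (c) the inclusion in (b) is strict. By Lemma~\ref{lem:5/4}, $p$ is archimedean, which is the hypothesis that makes the machinery of Section~\ref{sec:freelassy} applicable. For part (a), the inclusion $\cmco\cD_p \subseteq \mcop$ is automatic: $\cD_p \subseteq \mcop$, $\mcop$ is matrix convex (direct sums and compressions by isometries $\FF^m\to\FF^n$ into a dilating Hilbert space preserve membership), and $\mcop$ is closed by Theorem~\ref{thm:infinitelasserre}. For the reverse, I would verify the LPP for $p$ and then invoke Proposition~\ref{prop:ifposstatz}. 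Given a monic pencil $L$ with $L|_{\cD_p}\succeq 0$, for any $\epsilon>0$ the pencil $L+\epsilon I$ is strictly positive on $\cD_p$; the noncommutative archimedean (Putinar-type) Positivstellensatz produces $L+\epsilon\in M(p)$, which implies $L|_{\cO_p}\succeq -\epsilon$. Letting $\epsilon\to 0^+$ and applying Corollary~\ref{cor:hb} at the operator level gives $\mcop\subseteq\cmco\cD_p$.

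For (b), let $(X,Y)\in\mcop$ and pick a dilation $Z=(Z_1,Z_2)\in\cO_p$ together with an isometry $V$ satisfying $X=V^*Z_1V$, $Y=V^*Z_2V$. Set $W:=V^*Z_2^2V$. Operator convexity of $t\mapsto t^2$ yields both $W\succeq Y^2$ and $V^*Z_2^4V = V^*(Z_2^2)^2V \succeq W^2$. Compressing the operator inequality $I\succeq Z_1^2+Z_2^4$ by $V$ then gives $I\succeq V^*Z_1^2V+V^*Z_2^4V\succeq X^2+W^2$, so $\Lambda(X,Y,W)\succeq 0$ via the Schur complement characterization \eqref{eq:standardTVlift}. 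Hence $(X,Y)\in\cC$.

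For (c), I would exhibit a concrete $(X,Y)\in\cC(2)\setminus\mcop(2)$. The key point is that $\cC$ only enforces the second-order constraint $W\succeq Y^2$ together with $X^2+W^2\preceq I$, whereas $\mcop$ actually forces $X^2+V^*Z_2^4V\preceq I$ with a potentially strict gap $V^*Z_2^4V\succ W^2$. Exploit the well-known failure of operator monotonicity of $t\mapsto t^2$: pick positive semidefinite $2\times 2$ matrices $B\preceq A$ with $A^2\not\succeq B^2$, say $B=\tfrac{1}{3}\bem 1 & 1 \\ 1 & 1 \eem$ and $A=\tfrac{1}{3}\bem 2 & 1 \\ 1 & 1 \eem$ (so $\|A\|<1$, $A-B\succeq 0$, and $B^2-A^2=\tfrac{1}{9}\bem -3 & -1 \\ -1 & 0 \eem$ has positive eigenvalue $(\sqrt{13}-3)/18$). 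Setting $Y:=B^{1/2}$, $W:=A$, $X:=(I-A^2)^{1/2}$ gives $W\succeq Y^2=B$ and $X^2+W^2=I$, so $\Lambda(X,Y,W)\succeq 0$ and $(X,Y)\in\cC$; however $X^2+Y^4=I+(B^2-A^2)\not\preceq I$, so $(X,Y)\notin\cD_p$. To promote this to $(X,Y)\notin\mcop$, I would use (a) and separate $(X,Y)$ from $\cmco\cD_p$ by producing a monic pencil $L$ with $L|_{\cD_p}\succeq 0$ and $L(X,Y)\not\succeq 0$, guided by the $d=1$ Lasserre--Parrilo relaxation of Subsection~\ref{subsec:hankelFun}.

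The delicate step is (c). While the dilation-theoretic inclusion in (b) is clean, ruling out membership of a specific $(X,Y)$ in $\mcop$ requires excluding \emph{all} Hilbert-space dilations $(Z_1,Z_2)\in\cO_p$, equivalently showing that the truncated Hankel/localizing system at $(X,Y)$ cannot be completed to an element of $\Lift_p$. This is the moment-theoretic crux; the lift $\cC$ is blind to it because it only involves moments up to order two, while $\mcop$ implicitly sees all higher moments through the archimedean Positivstellensatz from part (a).
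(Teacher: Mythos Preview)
Your plan for (a) via the LPP and Proposition~\ref{prop:ifposstatz} is correct: since $p$ is archimedean by Lemma~\ref{lem:5/4}, the Helton--McCullough Positivstellensatz \cite{HM04a} applied to $L+\epsilon$ (strictly positive on $\cD_p$ whenever $L|_{\cD_p}\succeq 0$) places it in $M^\mu(p)$, which is exactly LPP. The paper takes a different route in Subsection~\ref{subsec:73}: it fixes $X\in\mcop(m)$ with operator dilation $(Y_1,Y_2)\in\cO_p$, uses the spectral resolution of $Y_2$ to approximate by operators with finite spectrum, and thereby produces finite-dimensional dilations converging to $X$. Your argument is shorter but imports the Positivstellensatz as a black box; the paper's is self-contained and specific to this $p$. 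Your argument for (b) via operator convexity of $t\mapsto t^2$ is clean and in fact slightly more direct than the paper's (which only proves $\mco\cD_p\subseteq\cC$ in Lemma~\ref{lem:preTV}(2) and then needs (a) plus closedness of $\cC$ to upgrade to $\mcop\subseteq\cC$).

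The genuine gap is in (c). You construct a legitimate point $(X,Y)\in\cC\setminus\cD_p$, but you do not show $(X,Y)\notin\mcop$; you only say you ``would'' produce a separating pencil ``guided by the $d=1$ Lasserre--Parrilo relaxation''. That is not a proof, and the paper itself (Subsection~\ref{subsec:72}) reports that separation at the $d=1$ level is supported only by numerical experiments, not rigorously. Moreover, since $\mcop\setminus\cD_p$ is nonempty (level $2$ of $\cD_p$ is nonconvex) and $\mcop\subseteq\cC$, many points of $\cC\setminus\cD_p$ \emph{do} lie in $\mcop$; there is no a~priori reason your particular $(X,Y)$ is not among them. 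The paper's method (Lemma~\ref{lemProp:TV}) is entirely different and is the missing idea: it chooses the very specific $(X,Y)$ with $Y^2=\mu\bem 1&0\\0&0\eem$ and $W=\mu\bem 2&1\\1&1\eem$ on the extremal boundary $X^2+W^2=I$, and then proves \emph{directly} that no dilation $(\tilde X,\tilde Y)$ with $I-\tilde X^2-\tilde Y^4\succeq 0$ exists. Writing $\tilde Y=\bem Y&\beta\\\beta^*&\nu\eem$, compressing $I-\tilde X^2-\tilde Y^4\succeq 0$ to the top block and using operator monotonicity of the square root forces $\beta=\sqrt{\mu}\bem 1\\1\eem b^*$ with $\|b\|=1$; the resulting tightness $T=W$ then kills the off-diagonal $\delta=Y\beta+\beta\nu$ of $\tilde Y^2$, which finally forces $\beta=0$, contradicting $I-X^2-Y^4\not\succeq 0$. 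No separating pencil is ever constructed.
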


We shall prove the equality in Subsection \ref{subsec:73} below, and now proceed 
to establish the strict
inclusion.

\begin{lem}\label{lem:preTV}
\mbox\par{}
\ben[\rm(1)]
\item
The projection $\cC(1)$ of $\cD_L(1)$ onto the $(x,y)$-space equals $\cD_p(1)$.
\item
$\mco \cD_p\subseteq\cC$.
\een
\end{lem}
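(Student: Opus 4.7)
The plan for (1) is a direct Schur complement computation. Writing $t := w+\alpha$, the scalar condition $L_1(x,y,w)\succeq 0$ becomes $t \geq \gamma^2 y^2$, and $L_2(x,y,w)\succeq 0$ becomes $t^2 \leq \gamma^4(1-x^2)$ (this uses $1+\alpha^2 = \gamma^4$). A compatible $t\in \R$ exists iff $\gamma^4 y^4 \leq \gamma^4(1-x^2)$, i.e., iff $p(x,y)\ge 0$.

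For (2), I will invoke the description of the matrix convex hull from Proposition \ref{prop:semialghull}: every element of $\mco\cD_p(n)$ has the form $(X,Y) = (V^* X'V,V^* Y'V)$ for some isometry $V\colon \FF^n \to \FF^m$ and some $(X',Y')\in\cD_p(m)$. The approach is to produce a witness $W'\in\mbS_m$ with $L(X',Y',W')\succeq 0$ and then take $W := V^* W' V\in\mbS_n$. Because $L$ is a linear pencil in $(x,y,w)$,
\[
L(V^* X' V, V^* Y' V, V^* W' V)=(I\otimes V)^*\, L(X',Y',W')\,(I\otimes V)\succeq 0,
\]
so $W$ witnesses $(X,Y)\in\cC(n)$ whenever $W'$ witnesses $(X',Y')\in\cC(m)$.

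The crux of the argument, and the step I expect to be the main obstacle, is identifying the correct matrix ansatz for $W'$. Guided by the scalar analysis (where the minimal admissible $t$ is $\gamma^2 y^2$), I propose $W' := \gamma^2 (Y')^2 - \alpha I$. With this choice
\[
L_1(X',Y',W') \;=\; \begin{pmatrix} I \\ \gamma Y'\end{pmatrix}\begin{pmatrix} I & \gamma Y'\end{pmatrix}\;\succeq\; 0
\]
by construction. A Schur complement on $L_2(X',Y',W')$ reduces $L_2\succeq 0$ to $(W'+\alpha I)^2\preceq \gamma^4\bigl(I-(X')^2\bigr)$, which, after substituting $W'+\alpha I=\gamma^2(Y')^2$ and $\gamma^4=1+\alpha^2$, becomes $(Y')^4\preceq I-(X')^2$, i.e., $p(X',Y')\succeq 0$, which holds by hypothesis.

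The pitfall to flag: the naive ansatz $W'=(Y')^2$ (which works for the simpler non-monic pencil $\Lambda$ of \eqref{eq:standardTVlift}) leaves a slack $(\gamma^2-1)(Y')^2+\alpha I$ in $L_1\succeq 0$; this slack propagates through the Schur complement for $L_2$ into a condition strictly stronger than $p\succeq 0$. Matching $L_1$ exactly with the rank-one ansatz eliminates this slack and is what makes the lifting argument pass from $\cD_p$ to all of $\mco\cD_p$.
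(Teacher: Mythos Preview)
Your proof is correct and follows essentially the same route as the paper. For (1) you perform the identical Schur complement computation (with the cosmetic substitution $t=w+\alpha$), and for (2) you use the same witness $W'=\gamma^2(Y')^2-\alpha I$; the paper simply shows $\cD_p\subseteq\cC$ with this witness and then invokes ``$\cC$ is matrix convex'' rather than unpacking the isometric compression explicitly, but your argument is precisely that unpacking. One minor remark: in your ``pitfall'' paragraph, the failure of the naive ansatz $W'=(Y')^2$ is not really a propagation of the $L_1$-slack into $L_2$ (the two blocks are a direct sum), but simply that the $L_2$ Schur complement $I-2\alpha W'-\gamma^4(X')^2-(W')^2$ fails outright for that choice; the diagnosis is right even if the mechanism you describe is slightly off.
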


 \begin{proof}
 Given $(x,y)\in\cD_p(1)$, let $w=\gamma^2y^2-\al$. This makes $L_1(x,y,w)$ positive semidefinite and singular. The Schur complement of the top $2\times2$ block of $L_2(x,y,w)$ is thus
 \[
1-2\al w- \ga^4 x^2-w^2=
1+ \alpha ^2-\gamma ^4 x^2- \gamma ^4 y^4=\ga^4(1-x^2-y^4)\geq0
 \]
 making $L_2(x,y,w)\succeq0$.
 
 Conversely, if $(x,y,w)\in\cD_L(1)$, then $w\geq\gamma^2y^2-\al$. Again, by way of Schur complements, 
 \[
 \begin{split}
 0&\leq 1-2\al w- \ga^4 x^2-w^2
 = 1+\al^2 - (\al+w)^2 - \ga^4 x \\
& \leq 1+\al^2 - \ga^4 y^4 -\gamma ^4 x^2
 = \ga^4(1-x^2-y^4),
 \end{split}
 \] 
 showing $1-x^2-y^4\geq 0$.
 
 For (2), take $(X,Y)\in\cD_p$.
Thus $I-X^2-Y^4\succeq0$. Set $W=\gamma^2Y^2-\al I$.
This makes $L_1(X,Y,W)\succeq0$. 
The Schur complement of the block top $2\times2$ block of $L_2(X,Y,W)$ is thus
 \[
1-2\al W- \ga^4 X^2-W^2=
1+ \alpha ^2-\gamma ^4 X^2- \gamma ^4 Y^4=\ga^4(1-X^2-Y^4)\succeq0
 \]
 making $L_2(X,Y,W)\succeq0$.
 Since $\cC$ is matrix convex, this establishes $\mco \cD_p\subseteq\cC$.
 \end{proof}

\begin{lem}\label{lemProp:TV}
$\mcop\subsetneq\cC$.
\end{lem}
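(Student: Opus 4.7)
First, note the easy inclusion $\mcop \subseteq \cC$. Given $(X,Y) = V^*(Z_1,Z_2)V$ with $(Z_1,Z_2) \in \cO_p$ and $V$ an isometry, set $W_0 := V^*Z_2^2V - \alpha I$. Operator convexity of $t\mapsto t^2$ gives $W_0 + \alpha I = V^*Z_2^2V \succeq (V^*Z_2V)^2 = Y^2$, which via Schur complement yields $L_1(X,Y,W_0)\succeq 0$. Applying Jensen twice and using $Z_1^2 + Z_2^4 \preceq I$ gives
\[
(W_0+\alpha I)^2 = (V^*Z_2^2V)^2 \preceq V^*Z_2^4V \preceq V^*(I - Z_1^2)V \preceq I - X^2 \preceq \gamma^4(I-X^2),
\]
which (using $1+\alpha^2=\gamma^4$) is exactly the Schur complement inequality equivalent to $L_2(X,Y,W_0) \succeq 0$. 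So $(X,Y)\in\cC$.

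For the strict containment the strategy is to produce an explicit pair $(X_0,Y_0) \in \cC$ together with a monic linear pencil $L^{\sharp}(x,y)$ which is positive semidefinite on $\cD_p$ but satisfies $L^{\sharp}(X_0,Y_0) \not\succeq 0$. By the equality $\cmco\cD_p = \mcop$ (to be established in Subsection \ref{subsec:73}) and the Hahn--Banach description of closed matrix convex hulls in Corollary \ref{cor:hb}, every member of $\mcop$ satisfies $L^{\sharp}\succeq 0$; hence such $(X_0,Y_0)$ automatically lies outside $\mcop$.

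The natural source of $L^{\sharp}$ is the next level of the free Lasserre--Parrilo hierarchy. As discussed in Subsection \ref{subsec:72}, the spectrahedrop $\cC$ coincides with the $d=0$ relaxation $\hLift_p(\textvisiblespace;0)$ (equivalent to the classical LMI lift, cf.\ Example \ref{ex:btvlift}), while by Corollary \ref{cor:seqlasserre} the $d=1$ relaxation $\hLift_p(\textvisiblespace;1)$ is a strictly finer LMI outer approximation of $\mcop$. One picks $(X_0,Y_0,W_0)\in\cD_L(2)$ whose slack $W_0$ is minimally compatible with the $d=0$ LMIs but for which the enlarged Hankel matrix $H_3(Y)$ and localizing matrix $\hs_{p,1}(Y)$ of Subsection \ref{sec:hankelFun} cannot be completed by any choice of third- and fourth-order moments $Y_{i_1i_2i_3},Y_{i_1i_2i_3i_4}$; the corresponding SDP-infeasibility certificate provides the separating pencil $L^{\sharp}$.

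The crux is exhibiting such an explicit pair $(X_0,Y_0)$ and verifying the non-extendability rigorously. The underlying phenomenon is that the slack $W$ in $\cC$ is only constrained by $W+\alpha I\succeq \gamma^2Y^2$ and $(W+\alpha I)^2\preceq \gamma^4(I-X^2)$, whereas the ``operator-level'' second moment $V^*Z_2^2V$ that would arise from $(X,Y)\in\mcop$ carries a consistent infinite sequence of higher moments whose joint block Hankel positivity is strictly stronger. When $Y_0$ is chosen so that $Y_0^2$ does not commute with a natural candidate for $V^*Z_2^3V$ (for instance, $Y_0$ with distinct eigenvalues combined with a genuinely noncommuting $X_0$), the higher-moment PSD constraints are strictly more restrictive than those defining $\cC$; carrying out this computation at matrix size $n=2$ furnishes the required $(X_0,Y_0)$ and pencil $L^{\sharp}$, completing the proof.
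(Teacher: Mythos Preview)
Your inclusion argument $\mcop\subseteq\cC$ is essentially correct and is in fact a small addition to what the paper does (Lemma~\ref{lem:preTV} only records $\mco\cD_p\subseteq\cC$). There is, however, a slip: with $W_0=V^*Z_2^2V-\alpha I$ the Schur complement of $L_1$ demands $W_0+\alpha I\succeq\gamma^2 Y^2$, not merely $\succeq Y^2$. Taking instead $W_0=\gamma^2 V^*Z_2^2V-\alpha I$ fixes both $L_1$ and $L_2$ simultaneously, and then your final inequality $I-X^2\preceq\gamma^4(I-X^2)$ becomes unnecessary.

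The strict inclusion, however, is not proved. What you have written is a \emph{strategy}: find $(X_0,Y_0)\in\cC(2)$ for which the $d=1$ moment completion is infeasible, extract a separating pencil $L^\sharp$ from the dual, and invoke $\cmco\cD_p=\mcop$ together with Corollary~\ref{cor:hb}. But you neither exhibit $(X_0,Y_0)$ nor verify the infeasibility, and the sentence ``carrying out this computation at matrix size $n=2$ furnishes the required $(X_0,Y_0)$ and pencil $L^\sharp$'' is exactly the content of the lemma, not a proof of it. Two further problems: (i) Corollary~\ref{cor:seqlasserre} only asserts that the relaxations converge to $\mcop$, not that $\hLift_p(\tvs;1)$ is \emph{strictly} smaller than $\hLift_p(\tvs;0)$; and (ii) the only place in the paper where $\hLift_p(2;1)\subsetneq\cC(2)$ is discussed (Subsection~\ref{subsec:72}) is itself supported only by numerical experiment, so you would be resting a lemma on a computation the paper does not claim to have done rigorously.

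The paper's proof is entirely different and self-contained. It writes down an explicit $2\times 2$ pair
\[
Y=\sqrt{\mu}\begin{pmatrix}1&0\\0&0\end{pmatrix},\qquad W=\mu\begin{pmatrix}2&1\\1&1\end{pmatrix},\qquad X^2=I-W^2,
\]
with $\mu$ chosen so $\|W\|=1$, checks by hand that $(X,Y)\in\cC$, and then shows $(X,Y)\notin\mcop$ by a direct dilation argument: assuming a self-adjoint extension $(\tX,\tY)$ with $I-\tX^2-\tY^4\succeq 0$, one compresses to the top $2\times2$ block and, using operator monotonicity of the square root and the specific numerics of $W$, forces the off-diagonal block $\beta$ of $\tY$ to vanish, yielding a contradiction. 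No appeal to Subsection~\ref{subsec:73}, to any Lasserre relaxation, or to a separating pencil is needed.
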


\begin{proof}
For this strict inclusion we simply exhibit  matrix tuples, namely,
 points in the projection $\cC$ onto the $(x,y)$-space of $\cD_L$ which are not
 in $\mcop$.
 In terms of $\mu>0$ specified below, let 
\[
   Y= \sqrt{\mu}  \begin{pmatrix} 1 & 0 \\ 0 & 0 \end{pmatrix}.
\]
 Take
\[
  W = \mu \begin{pmatrix} 2 & 1 \\ 1 & 1 \end{pmatrix}.
\]
  Choose $\mu$ so that the norm of $W$ is $1$ 
 and let \[X^2 = 1 - W^2.\]
   Then 
$      1-X^2 -W^2 = 0
$   and at the same time $Y^2\le W$.   
   Thus $(X,Y) \in \cC$.
   On the other hand, 
\[
  Y^4 - W^2 = \mu^2 \begin{pmatrix} 4 & 3\\ 3 & 2 \end{pmatrix} \not\succeq 0.
\]
 Hence $I-X^2-Y^4 \not\succeq 0$, i.e., $(X,Y)\not\in\cD_p$.

\def\tY{\tilde Y}
  We next show that $(X,Y)\not\in\mcop$.  It suffices to
  show if $\tX, \tY$ are of the form
\[
  \tX =\begin{pmatrix} X& \al \\ \al^*& *  \end{pmatrix}, \ \ 
   \tY=\begin{pmatrix} Y & \beta \\ \beta^* & \nu \end{pmatrix}.
\]
  then  $I-\tX^2 -\tY^4 \not\succeq 0.$
  We argue by contradiction and accordingly assume
  $I-\tX^2-\tY^4\succeq  0$. 
  To do this the first step will be  to show that $\beta =0$. 
  Next, $\beta=0$ implies,
  projecting onto the top subspace,
\[
   0 \preceq  I - (X^2 + \al\al^*)  - Y^4 \preceq  
      I - X^2   - Y^4.
\]
  But then, because
 $I-X^2 -Y^4 \not\succeq 0$, we get a contradiction.

Now to the attack on $\be$.
  Note that 
\begin{equation}
\label{eq:YY}
  \tY^2 = \begin{pmatrix}  Y^2 + \beta \beta^* &\delta
  \\  \delta^* &* \end{pmatrix}.
\end{equation} 
for some $\delta$ and some $*$.
 Let $T : =Y^2+\beta \beta^* \succeq Y^2$.  Further, note that 
\[
  \tY^4 = \begin{pmatrix} T^2 +\delta \delta^* & * \\ * & * \end{pmatrix}.
\]
 The upper left entry of  $I-\tX^2 -\tY^4$
 equals 
\begin{equation}
 \label{eq:XT}
 0 \preceq
  I - (X^2 + \al\al^* ) - (T^2 +\delta \delta^*) \preceq
     I- X^2 - T^2  = W^2-T^2.
\end{equation}
 Further, we have
\[
  Y^2 =\mu \begin{pmatrix} 1 & 0 \\ 0 & 0 \end{pmatrix}.
\] 
  So after dividing \eqref{eq:XT} through by $\mu^2$, we obtain,
\begin{equation}\label{eq:weirdoIneq}
  \begin{pmatrix} 2 & 1 \\ 1 & 1 \end{pmatrix}^2
   \succeq \left( \begin{pmatrix} 1 & 0 \\ 0 & 0 \end{pmatrix}
    +  \frac{1}{\mu} \beta \beta^* \right)^2.
\end{equation}
Since the square root function is operator monotone, \eqref{eq:weirdoIneq}
yields
\begin{equation}\label{eq:lessWeird}
\bem 1 \\ 1 \eem  \bem 1 & 1 \eem =
  \begin{pmatrix} 1 & 1 \\ 1 & 1 \end{pmatrix}
   \succeq   \frac{1}{\mu} \beta \beta^*,
   \end{equation}
 or equivalently,
\[
 \beta = \sqrt{\mu}  \begin{pmatrix} 1 \\ 1 \end{pmatrix} b^*,
\]
 for some  vector $b$ with norm $\leq1$.
 Putting these back into \eqref{eq:weirdoIneq} leads to
 \[
 \bem
 -2 \| b\| ^4-2 \| b\| ^2+4 & -2 \| b\| ^4-\| b\| ^2+3 \\
 -2 \| b\| ^4-\| b\| ^2+3 & 2-2 \| b\| ^4
 \eem\succeq0.
 \]
 Since the determinant of this matrix equals
 \[
 -(\| b\|^2 -1)^2, 
 \]
 we see $\|b\|=1$.  In particular, we have equality in \eqref{eq:lessWeird} and \eqref{eq:XT}.
 Hence $T=W$ so that $I-X^2-T^2 =0$. 
 
 Returning  to 
 the upper left hand entry of  $I-\tX^2 -\tY^4$, it follows
 from \eqref{eq:YY}  and \eqref{eq:XT} that
 we have
\[
 I-X^2 - T^2 -\delta \delta^* \succeq 0.
\]
  Hence $\delta \delta^*=0$ and so $\delta=0$.
  Since $\delta$ is of the form 
  $\delta = Y \beta +\beta \nu,$ we have
\[
 \begin{split}
   0 &=  Y\beta + \beta \nu       =  \sqrt{\mu} \begin{pmatrix} 1 & 0 \\ 0 & 0 \end{pmatrix} \beta
         +\beta\nu \\
 & = \mu  \begin{pmatrix} 1 & 0 \\ 0 & 0 \end{pmatrix}  
 \begin{pmatrix} 1 \\ 1 \end{pmatrix} b^* + \sqrt{\mu}  \begin{pmatrix} 1 \\ 1 \end{pmatrix} b^* \nu \\
  & = \mu \bem 1\\0\eem b^* + \sqrt{\mu} \begin{pmatrix} 1 \\ 1 \end{pmatrix} b^* \nu,
 \end{split}
\]
leading to 
\[
b^*\nu = 0 \quad \text{and} \quad b^*\nu+\sqrt\mu b^*=0.
\]
Hence $b^*=0$. This implies $\be=0$, delivering the promised contradiction. 
\end{proof}

\begin{prop}
$\mco \cD_p(1)=\cD_p(1)$.
\end{prop}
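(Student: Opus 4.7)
The containment $\cD_p(1)\subseteq \mco\cD_p(1)$ is immediate from the definition of the matrix convex hull, so the work lies in the reverse inclusion. I plan to give a direct argument using the structure of the polynomial $p=1-x^2-y^4$ together with Cauchy--Schwarz, and then note how the same conclusion can be extracted from Proposition~\ref{prop:liftMe}.

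\smallskip
\noindent\textbf{Direct approach.} Given $(x,y)\in \mco\cD_p(1)$, Proposition~\ref{prop:semialghull} supplies an $n$, a tuple $Z=(Z_1,Z_2)\in\cD_p(n)$, and a unit vector $v\in\FF^n$ (viewed as an isometry $\FF\to\FF^n$) such that $x=v^*Z_1v$ and $y=v^*Z_2 v$. From $I-Z_1^2-Z_2^4\succeq 0$ one gets
\[
1-\langle Z_1^2 v,v\rangle - \langle Z_2^4 v,v\rangle \ge 0.
\]
Two uses of Cauchy--Schwarz (with the unit vector $v$) give
\[
\langle Z_1^2 v,v\rangle = \|Z_1 v\|^2 \ge (v^*Z_1 v)^2 = x^2,
\]
and
\[
\langle Z_2^4 v,v\rangle = \|Z_2^2 v\|^2 \ge (v^*Z_2^2 v)^2 = \|Z_2 v\|^4 \ge (v^*Z_2 v)^4 = y^4.
\]
Combining these inequalities yields $1-x^2-y^4\ge 0$, i.e.\ $(x,y)\in\cD_p(1)$.

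\smallskip
\noindent\textbf{Alternative via Proposition~\ref{prop:liftMe}.} One can also assemble the result from earlier work in Subsection~\ref{subsec:71}: the set $\cD_p(1)\subseteq[-1,1]^2$ is bounded; because $x^2+y^4$ is a convex function on $\mathbb R^2$ the sublevel set $\cD_p(1)$ is already convex, so $\conv(\cD_p(1))=\cD_p(1)$; by Lemma~\ref{lem:preTV}(1) this common set equals $\cC(1)=\proj_x\cD_L(1)$, exhibiting an LMI lift of $\conv(\cD_p(1))$ to $\cD_L(1)$ (after the routine modification of $L$ indicated in Subsection~\ref{subsec:71} to make it monic); and Lemma~\ref{lem:preTV}(2) gives $\cD_p\subseteq\proj_x\cD_L$. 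Proposition~\ref{prop:liftMe} then yields $\conv(\cD_p(1))=\mco\cD_p(1)$, and the left-hand side is $\cD_p(1)$.

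\smallskip
I do not foresee any substantive obstacle: the direct proof is a one-line computation once the Cauchy--Schwarz estimates are set up correctly, and the indirect proof merely packages the pieces already built in Lemma~\ref{lem:preTV} and Proposition~\ref{prop:liftMe}. The only point worth emphasizing is the contrast with Example~\ref{ex:btvv}: although $\cD_p(1)$ is convex, $\cD_p(2)$ is not, so the proposition is genuinely a level-one statement and cannot be upgraded to $\mco\cD_p=\cD_p$.
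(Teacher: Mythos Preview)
Your proof is correct. Your ``alternative'' via Proposition~\ref{prop:liftMe} and Lemma~\ref{lem:preTV} is exactly one of the two proofs the paper gives. Your direct Cauchy--Schwarz argument is a genuinely different route: it is self-contained, bypasses the LMI lift machinery entirely, and makes transparent why convexity of $t\mapsto t^2$ and $t\mapsto t^4$ is what drives the level-one result. The paper's second proof is even shorter than either of yours: from Lemma~\ref{lem:preTV}(2) one has $\mco\cD_p\subseteq\cC$, hence $\cD_p(1)\subseteq\mco\cD_p(1)\subseteq\cC(1)$, and Lemma~\ref{lem:preTV}(1) closes the sandwich by $\cC(1)=\cD_p(1)$. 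This buys brevity but leans on the spectrahedrop $\cC$ already constructed, whereas your direct argument would work for any $p=1-\sum x_j^{2k_j}$ without building any lift at all.
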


\begin{proof}
This follows from 
Lemma \ref{lem:preTV} and Proposition \ref{prop:liftMe}.
Alternately, use
$\cD_p\subseteq\mco \cD_p\subseteq\cC$ together with item (1) of Lemma \ref{lem:preTV}.
\end{proof}

\subsection{Comparing the $L$-Lift with the Lasserre--Parrilo Relaxations}\label{subsec:72}
Two Lasserre-Parrilo lifts of the \bTV{} were proposed in Subsection \ref{subsec:hankelFun}.
The malicious point 
constructed in the proof of Lemma \ref{lemProp:TV}
 serves to show that the  lift $\Lift_p(\tvs;0)$ based on 
 $\hs_{p,0}(Y)\oplus H_2(Y)\succeq0$ is again inexact, i.e., its projection 
 $\hLift_p(\tvs;0)$ 
 is still strictly bigger than $\mco \cD_p$. 
 On the other hand, the second Lasserre--Parrilo relaxation
 $\hs_{p,1}(Y)\oplus H_3(Y)\succeq0$ does seem to separate the malicious point from
 $\mco \cD_p$ -- according to our computer experiments.

\begin{prop}
Let $p=1-x_1^2-x_2^4$. Then $\hLift_p(\textvisiblespace;0)=\cC$, while
$\hLift_p(2;1)\subsetneq \cC(2)$.
\end{prop}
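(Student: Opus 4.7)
The plan is to prove the two assertions in turn, with the first being the substantive part.

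For the easier inclusion $\hLift_p(\tvs;0)\subseteq\cC$, fix $Y\in\Lift_p(\tvs;0)$ with $\hat Y=(X_1,X_2)$ and set $W:=Y_{22}$. Applying Schur complement to the $2\times 2$ principal block submatrices of $H_2(Y)\succeq 0$ indexed by the pairs $\{\emptyset,x_2\}$, $\{\emptyset,x_1\}$, and $\{\emptyset,x_2^2\}$ yields $W\succeq X_2^2$, $Y_{11}\succeq X_1^2$, and $Y_{2222}\succeq W^2$ respectively. Combining with $\hs_{p,0}(Y)=I-Y_{11}-Y_{2222}\succeq 0$ one obtains $I\succeq Y_{11}+Y_{2222}\succeq X_1^2+W^2$. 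Together with $W\succeq X_2^2$, this places $(X_1,X_2,W)$ in the free spectrahedron $\cD_\La$ of Example~\ref{ex:btvlift} via \eqref{eq:standardTVlift}, so $(X_1,X_2)\in\cC$.

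For the reverse inclusion $\cC\subseteq\hLift_p(\tvs;0)$, given $(X_1,X_2)\in\cC$ with a witness $W$, I would exhibit an explicit moment sequence. The natural ansatz is $Y_{11}=X_1^2$, $Y_{12}=X_1X_2$, $Y_{22}=W$, $Y_{122}=X_1W$, $Y_{222}=\tfrac12(X_2W+WX_2)$, $Y_{2222}=W^2$ for the entries of the cut-down $\check H_2(Y)$ of \eqref{eq:las1}, where $\hs_{p,0}=I-X_1^2-W^2\succeq 0$ is immediate. Positivity of $\check H_2$ reduces by one Schur complement, and then a further reduction using $W-X_2^2\succeq 0$, to a condition whose obstruction is captured by the commutator $[X_2,W]$; this obstruction is absorbed by enlarging $Y_{2222}$ within the window $[W^2,\,I-X_1^2]$ afforded by the $\cC$-hypothesis. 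Extending the remaining entries of $H_2(Y)$ (for mixed words of length $\le 4$ not appearing in $\check H_2$) then proceeds by realizing $Y$ as the compression $Y_\alpha=V^*Z^\alpha V$ of an operator dilation with $Z_1=X_1$ and $Z_2=\bigl(\begin{smallmatrix}X_2 & S\\ S & A\end{smallmatrix}\bigr)$ on $\FF^n\oplus\FF^n$, $S=(W-X_2^2)^{1/2}$, $V=\binom{I}{0}$, for a judicious self-adjoint choice of $A$. Since $H_2(Y)\succeq 0$ is automatic for moment sequences from operators, the only hard calculation is to select $A$ so that $V^*Z_2^4V$ lies below $I-X_1^2$; this uses the freedom in $A$ (and if necessary a larger Hilbert space) to compensate the noncommutativity of $X_1,X_2,W$.

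For the strict containment $\hLift_p(2;1)\subsetneq\cC(2)$, the inclusion follows from the general monotonicity $\hLift_p(\tvs;d+1)\subseteq\hLift_p(\tvs;d)$ (obtained by truncating a higher-degree moment sequence), combined with the equality just established. For strictness, I would revisit the malicious point $(X,Y)\in\cC(2)\setminus\mcop(2)$ explicitly produced in the proof of Lemma~\ref{lemProp:TV}. Arguing by contradiction, suppose there is a moment sequence $Y\in\Lift_p(2;1)$ with $\hat Y=(X,Y)$. The refinement $\hs_{p,1}(Y)\succeq 0$ brings in degree-$6$ moments; in particular its $(3,3)$-entry gives $Y_{2112}+Y_{222222}\preceq W$, and the Hankel inequalities from $H_3(Y)\succeq 0$ (notably $Y_{222222}\succeq Y_{222}^2$, together with lower bounds on $Y_{2112}$ coming from blocks indexed by words containing $x_1x_2$) should collide with the block structure of the specific malicious tuple. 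The main obstacle is producing a clean closed-form certificate of infeasibility, since the separating dual combines several of the degree-$6$ constraints; as the paper signals, this was originally identified via numerical SDP, and a rigorous analytic proof amounts to writing down the resulting dual certificate in exact arithmetic.
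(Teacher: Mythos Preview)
Your argument for $\hLift_p(\tvs;0)\subseteq\cC$ is correct and coincides with the paper's. For the second assertion both you and the paper defer to a numerical SDP certificate, so there is nothing to add.

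The genuine gap is in $\cC\subseteq\hLift_p(\tvs;0)$. You correctly isolate the commutator $[X_2,W]$ as the obstruction, but the assertion that it can be ``absorbed by enlarging $Y_{2222}$ within $[W^2,\,I-X_1^2]$'' or by a ``judicious'' dilation parameter $A$ is unsupported, and in fact breaks down at the malicious pair of Lemma~\ref{lemProp:TV}. There $I-X_1^2-W^2=0$, so the window collapses: the Hankel constraints give $Y_{11}\succeq X_1^2$, $Y_{22}\succeq X_2^2$, $Y_{2222}\succeq Y_{22}^2$, while $\hs_{p,0}$ gives $Y_{11}+Y_{2222}\preceq I$; together $Y_{22}^2\preceq W^2$, hence $Y_{22}\preceq W$ by operator monotonicity of the square root. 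Since $W-X_2^2$ is rank one the order interval $[X_2^2,W]$ is a segment, and a direct computation shows $Y_{22}^2\preceq W^2$ on this segment only at $Y_{22}=W$; then $Y_{2222}=W^2$ and $Y_{11}=X_1^2$ are forced. The Schur complement of the $\{\emptyset,x_2,x_2^2\}$ block of $H_2(Y)$ now reads $\bigl(\begin{smallmatrix} W-X_2^2 & Y_{222}-X_2W\\ Y_{222}-WX_2 & 0\end{smallmatrix}\bigr)\succeq0$, which with zero lower-right corner forces $Y_{222}=X_2W$; but $Y_{222}$ must be self-adjoint and $[X_2,W]\neq0$. So no moment sequence places this point in $\hLift_p(\tvs;0)$, and there is no residual freedom for either of your proposed fixes to exploit.

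The paper's route is different: it writes down an explicit self-adjoint block matrix and factors it as a sum of two PSD pieces, then extends to the full $H_2(Y)$ by a congruence. However, its displayed $\check H_2(Y)$ carries $X_2W$ in the $(x_2,x_2^2)$ slot and $WX_2$ in the $(x_2^2,x_2)$ slot; both index the single moment $Y_{222}$, so the matrix is PSD and self-adjoint but is \emph{not} a truncated Hankel matrix of any moment sequence unless $X_2$ and $W$ commute. In other words, the paper's factorization sidesteps precisely the commutator obstruction you flagged. Your instinct to symmetrize $Y_{222}$ is the honest one, but once you do, the positivity check is no longer a formality, and the computation above shows it cannot be completed in general with the given $W$.
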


\begin{proof}
Let $\Lift_p'(\tvs;0)$ denote the ``reduced'' lift obtained by using \eqref{eq:las1}, and
$\hLift_p'(\tvs;0)$ its projection. It is clear that
$\hLift_p'(\tvs;0)\supseteq \hLift_p(\tvs;0)$. 
Next, assume  $(X_1,X_2)\in \hLift_p'(\textvisiblespace;0)$, and take a feasible point
$Y$ for \eqref{eq:las1}. Then with $W=Y_{22}$ we have
$W\succeq X_2^2$ by considering the submatrix of $\check H_2(Y)$ spanned by columns and rows $1,3$. 
Likewise, $Y_{11}\succeq X_1^2$ and $Y_{2222} \succeq W^2$. Hence
\[
0\preceq 1-Y_{11}-Y_{2222}\preceq 1-X_1^2-W^2,
\]
showing $\La(X_1,X_2, W)\succeq0$, i.e., $(X_1,X_2)\in \cC$.

Conversely, let $(X_1,X_2)\in\cC$. Choose $Y$ so that
\[
\check H_2(Y) =\bem
1 & X_1 & X_2 & W \\
X_1 & X_1^2 & X_1 X_2 & X_1 W\\
X_2 & X_2 X_1 & W & X_2 W \\
W & W X_1 & W X_2 & W^2
\eem.
\]
Then \[
\hs_{p,0}(Y)=1-X_1^2-W^2\succeq0
\]
by assumption. Furthermore,
\[
\check H_2(Y)= \bem 1 & 0 & X_2 & W \\ 0 & 1 & 0 & 0 \eem^*
\bem 1 & X_1 \\ X_1 & X_1^2 \eem 
\bem 1 & 0 & X_2 & W \\ 0 & 1 & 0 & 0 \eem
+ \bem 0 & 0 & 0 & 0 \\ 0 & 0 & 0 & 0\\ 0 & 0 & W-X_2^2 & 0\\ 0 & 0 & 0 & 0\eem
\succeq0.
\]
All this shows $(X_1,X_2)\in\hLift_p'(\textvisiblespace;0)$.

As a final step, we extend $\check H_2(Y)$ to a positive semidefinite $H_2(Y)$. 
Again, this is now straightforward. Using
\[
Z=
\bem
X_1^2 & X_1X_2 & 0 \\
 0 & 0 & 0 \\
 0 & 0 & X_1 \\
 0 & 0 & 0
 \eem
 \]
 we set
\[
P H_2(Y) P =   \bem I_4 & Z \eem^* \check H_2(Y) \bem I_4 & Z \eem,
\]
where $P$ is the permutation matrix 
of the permutation $\bem 4 & 5 & 6 & 7\eem$.
Hence $(X_1,X_2)\in\hLift_p(\textvisiblespace;0)$, concluding the first part of the proof.

The second statement of the proposition follows from
numerical computer experiments;
see the Mathematica notebook {\tt TVlift.nb} available from {\tt arxiv}. 
\end{proof}

\subsection{Matrix versus Operator Convex Hull: Bent TV Screen}\label{subsec:73}

 From  Theorem \ref{thm:infinitelasserre}, the 
  closure of $\mco\cD_p$ is contained in $\mcop$. 
 While  this inclusion 
  is generally proper
  (e.g.~there are examples of archimedean $p$ with
  $\cD_p=\varnothing\neq\cD_p^\infty$), the proposition below says
  that these sets are the same in at least one
 non-trivial example. 
 The proof uses spectral theory for bounded self-adjoint operators on a Hilbert space.

\begin{prop} Let $p= 1 -x_1^2 -x_2^4$. Then
$\mcop(n)$ is  the closure of $\mco \cD_p(n)$.
\end{prop}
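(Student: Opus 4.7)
The inclusion $\overline{\mco\cD_p(n)}\subseteq \mcop(n)$ is immediate from Theorem \ref{thm:infinitelasserre}, which asserts $\mcop(n)$ is closed and contains $\cD_p$. For the reverse inclusion, take $X\in\mcop(n)$ and, by Lemma \ref{lem:basiclasserre}, write $X=V^*ZV$ with $Z=(Z_1,Z_2)\in\cO_p(\cK)$ bounded self-adjoint operators on a Hilbert space $\cK$ and $V\colon\FF^n\to\cK$ an isometry. Passing to the cyclic subspace generated by $\mathrm{ran}(V)$ under $Z_1,Z_2$, we may assume $\cK$ is separable.

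The plan is to approximate $X$ from inside $\mco\cD_p(n)$ by a scale-and-compress procedure. Scale first: for $\delta\in(0,1)$, set $Z^\delta=(1-\delta)Z$. Since $Z_1^2+Z_2^4\preceq I$ and $(1-\delta)^4\leq (1-\delta)^2$, one obtains
\[
p(Z^\delta)\;\succeq\; \bigl(1-(1-\delta)^2\bigr)I,
\]
a uniform strict positivity margin, while $V^*Z^\delta V=(1-\delta)X$. Thus it suffices to show $(1-\delta)X\in\overline{\mco\cD_p(n)}$ for each fixed $\delta>0$ and then let $\delta\to 0$.

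Now invoke spectral theory on $Z_2$: write $Z_2=\int\lambda\,dE(\lambda)$ and approximate in operator norm by $Z_2^{(m)}=\sum_{i=1}^m\lambda_i E_i$, with spectral projections $E_i$ coming from a partition of $\sigma(Z_2)$ into intervals of diameter $<1/m$, so $\|Z_2^{(m)}-Z_2\|\to 0$. On each $E_i\cK$, the compression $E_i Z_1 E_i$ is a single bounded self-adjoint operator and hence admits a classical quasi-diagonal sequence of increasing finite-rank projections $P_k^{(i)}$ with $P_k^{(i)}\to I_{E_i\cK}$ strongly and $\|[P_k^{(i)},E_iZ_1E_i]\|\to 0$. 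Set $P_k:=\bigoplus_i P_k^{(i)}$, which commutes exactly with every $E_j$ (and hence with $Z_2^{(m)}$), and enlarge it by a finite-rank piece so that $\mathrm{ran}(V)\subseteq P_k\cK$. Define the finite-dimensional tuple
\[
W^{(k,m,\delta)}\;:=\;(1-\delta)\bigl(P_k Z_1 P_k,\;P_k Z_2^{(m)} P_k\bigr)\big|_{P_k\cK}.
\]
Combining $[P_k,Z_2^{(m)}]=0$, $\|Z_2^{(m)}-Z_2\|\to 0$, and $\|[P_k,Z_1]\|\to 0$, one estimates
\[
p\bigl(W^{(k,m,\delta)}\bigr)\;=\;(1-\delta)^2\, P_k\,p(Z)\,P_k\;+\;(\text{small error}),
\]
which remains $\succeq 0$ for $k,m$ large relative to $\delta$ thanks to the margin from the scaling step. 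Then $W^{(k,m,\delta)}\in\cD_p$ and $V^*W^{(k,m,\delta)}V\to (1-\delta)X$ as $m\to\infty$, placing $(1-\delta)X$ in $\overline{\mco\cD_p(n)}$.

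The main obstacle is establishing the global bound $\|[P_k,Z_1]\|\to 0$. The diagonal contributions $[P_k^{(i)},E_iZ_1E_i]$ are small by construction, but the off-diagonal blocks $E_iZ_1E_j$ (for $i\ne j$) produce terms of the form $P_k^{(i)}(E_iZ_1E_j)-(E_iZ_1E_j)P_k^{(j)}$ that do not vanish automatically. Controlling them will require choosing the spectral mesh $m$ and the filtrations $\{P_k^{(i)}\}_k$ in a coordinated way — for example, by refining the spectral partition so that the off-diagonal cross terms behave as compact perturbations, or by an auxiliary joint-approximation argument on the block-structure of $Z_1$ relative to the $E_i$. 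This interplay between the spectral decomposition of $Z_2$ and the finite-rank compression of $Z_1$ is the delicate technical core of the proof.
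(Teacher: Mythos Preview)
Your proposal has a genuine gap, precisely at the point you yourself flag as ``the delicate technical core.'' The commutator bound $\|[P_k,Z_1]\|\to 0$ cannot be arranged in general: an arbitrary pair of bounded self-adjoint operators need not be jointly quasidiagonal, and nothing in the hypotheses forces the off-diagonal blocks $E_iZ_1E_j$ to be small or compact. Hoping to ``refine the spectral partition so that the off-diagonal cross terms behave as compact perturbations'' is wishful; $Z_1$ and $Z_2$ need not have any compatibility whatsoever. So as written this is a sketch of a strategy with a hole that cannot be filled by routine means.

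The paper's argument sidesteps the commutator issue entirely by exploiting the special form $p=1-x_1^2-x_2^4$. The observation you are missing is that for the $x_1^2$ term, compression already goes the right way: for any projection (or isometry) $W$ one has $(W^*Z_1W)^2 \preceq W^*Z_1^2W$. So no approximate commutation between $P_k$ and $Z_1$ is ever needed. For the $x_2^4$ term, one chooses the finite-spectrum approximant $Z$ of $Z_2$ so that (i) $Z$ commutes with $Z_2$ and satisfies $Z^4\preceq Z_2^4$ (pick the sample point $t_j$ in each spectral interval closest to $0$), and (ii) the finite-dimensional space $\cK$ is simply $\bigoplus_j E(\omega_j^N)V\FF^n$, which is automatically invariant under $Z$ and contains $\mathrm{ran}\,V$. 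With $\tilde Y_1=W^*Z_1W$ and $\tilde Y_2=W^*ZW$ one gets directly
\[
\tilde Y_1^{\,2}+\tilde Y_2^{\,4}\;\preceq\; W^*\bigl(Z_1^2+Z_2^4\bigr)W\;\preceq\; I,
\]
so $(\tilde Y_1,\tilde Y_2)\in\cD_p$ on a finite-dimensional space, and the first coordinate of its compression equals $X_1$ exactly while the second converges to $X_2$ as $N\to\infty$. No $\delta$-scaling, no quasidiagonality, no commutator estimates are required.
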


\begin{proof}
 Fix a point $X\in\mathbb S_m^2 $ in the operator convex hull of the bent TV screen. Thus,
 there a Hilbert space $\cH$ and a tuple $Y=(Y_1,Y_2)$ of bounded self-adjoint operators on $\cH$ such that
\[
  I\succeq Y_1^2 + Y_2^4,
\]
 and an isometry $V:\FF^m\to \cH$ such that $X=V^*YV$. 

  Since $Y_2$ is self-adjoint, it has a spectral decomposition,
\[
  Y_2 = \int_{-1}^1 t \, dE(t),
\]
 for  a spectral measure $E$ on the interval $[-1,1]$.
  Given a positive integer $N$, let
\[
  \omega_j^N = \left[\frac{j}{N},\frac{j+1}{N}\right)
\]
 for $-N\le j < N-1$ and let $\omega_{N-1}^N = [\frac{N-1}{N},1]$. 
 For $0\le  j$, let $t_j=\frac{j}{N}$ and for $j<0$, let $t_j=\frac{j+1}{N}$.
 Let 
\[
 Z=  \sum_{j=-N}^{N-1}  t_j E(\omega_j^N)
\]
 and observe that $Z$ and $Y_2$ commute.
 In particular,
\beq
 \label{eq:ineqCommute}
 Z^4 \preceq Y_2^4.
\eeq

 Consider finite dimensional subspaces
\[
     E(\omega_j^N) \cH \supset \cH_j =  E(\omega_j^N)V\FF^m.
\]
 Let $\cK=\bigoplus_{j=-N}^{N-1} \cH_j$. Thus,
 $\cK$ is finite dimensional and contained in $\cH$.  Further, letting
 $W:\cK \to \cH$ denote the inclusion of $\cK$ into $\cH$, 
\[
 \tilde{Y}_2 = W^* Z W
\]
 satisfies,
\[
   \tilde{Y}_2^4 = W^* Z^4 W \preceq W^* Y_2^4 W,
\]
because of \eqref{eq:ineqCommute}.
 Let $\tilde{Y}_1 = W^* Y_1 W$. It follows that
\[
   \tilde{Y}_1^2 + \tilde{Y}_2^4  \preceq Y_1^2 + Y_2^4 \preceq I.
\]

At the same time, by construction, $V$ maps into $\cK$ so
  that $W^*V$ is an isometry  and 
\[
  X_1 = (W^*V)^* (W^* Y_1W)W^*V
\]
 Thus, the pair $(W^*V)^* \tilde{Y} W^*V = (X_1,(W^*V)^* \tilde{Y_2}W^*V)$ is in 
 the bent TV screen.  

 Emphasizing the dependence of $W$ on $N$, write $W_N=W$ and $Z^N=Z$. With this notation, observe that
\[
 \| Z^N-Y_2\| = \left\| \sum t_j E(\omega^N_j) -\int t\, dE(t) \right\| \le \frac{1}{N}.
\]
 Hence, $Z^N$ converges in the strong operator topology to $Y_2$.  
 Since $W_N W_N^* V=V$, it follows that
\[
  (W_N^*V)^* \tilde{Y}_2^N W_N^* V 
  = (W_N^*V)^* W_N^* Z^N W_N (W_N^*V) = V^* Z^N V
\]
 converges to $V^*Y_2 V= X_2$.
 The conclusion is that $X$ is in the closure
  of the matrix convex hull of the bent TV screen.
\end{proof}

\subsection{Examples where the Lasserre--Parrilo Lift is  Exact}\label{subsec:74}

Consider first $p=1- x y^2 x$. Then
\[
\cD_p \supseteq \big( \{0\}\times \smat\big) \cup \big(  \smat\times \{0\} \big),
\]
so
$\mco\cD_p$ will equal $\smat^2$.
In particular, the first Lasserre--Parrilo lift  $\hLift_p(\tvs;0)$ is exact.

For an example with a little different flavor, let
$p= (1-2y^2+x^2)\oplus (1-2x^2+y^2).$
Then $\cD_p$ given by
\[
\cD_p=\Big\{ (X,Y)\in\smat^2 :  Y^2- \frac12 X^2  \preceq \frac12,\,  X^2-\frac12 Y^2 \preceq \frac12 \Big\}
\]
is bounded, and
\[
\mco\cD_p=\big\{ (X,Y)\in\smat^2 : \|X\|\leq1, \, \|Y\|\leq1\}
\]
is again the projection of the first Lasserre--Parrilo lift $\Lift_p(\tvs;0)$.


\begin{spacing}{1.1}

\end{spacing}

\end{document}